\newtheorem{Theorem}{Theorem} 
\newtheorem{Proposition}{Proposition} 
\newtheorem{Lemma}{Lemma}
\newtheorem*{Corollary*}{Corollary}
\newtheorem*{Theorem*}{Theorem}
\theoremstyle{remark}
\newtheorem{Example}{Example}
\newcommand\lie[1]{{\mathfrak #1}}
\newcommand\Proj{{\rm Proj\,}}
\newcommand\iso{{\,\cong\,}}
\newcommand\tensor{{\otimes}}
\newcommand\onto{\mathop{\twoheadrightarrow}}
\newcommand\into{\operatorname*{\hookrightarrow}}
\newcommand\union{\cup}
\newcommand\Union{\bigcup}
\newcommand\Pone{{\mathbb P}^1}
\newcommand\complexes{{\mathbb C}}
\newcommand\integers{{\mathbb Z}}
\newcommand\naturals{{\mathbb N}}
\theoremstyle{plain}
\renewenvironment{quotation}
{\list{}{
    \setlength\itemindent{0em}%
    \setlength\leftmargin{1.5em}
    \setlength\rightmargin{1.5em}
  }%
\item[]}
{\endlist}
\newcommand\dfn{\bf} 
\newcommand\Spec{{\rm Spec}\,}
\newcommand\Gm{{\mathbb G}_m}
\newcommand\Ga{{\mathbb G}_a}
\newcommand\junk[1]{}
\newcommand\wtF{\widetilde F}
\begin{document}
\pagestyle{plain}

\title{Automatically reduced degenerations of \\
   automatically normal varieties}

\author{Allen Knutson}
\thanks{Supported by an NSF grant.}
\email{allenk@math.ucsd.edu}
\date{\today}

\maketitle

\begin{abstract}
  Let $F$ be a flat family of projective schemes, whose geometric generic
  fiber is reduced and irreducible. We give conditions on a
  special fiber (a ``limit'' of the family) 
  to guarantee that it too is reduced. 
  These conditions often imply also that the generic fiber is normal.
  The conditions are particularly easy to check in the setup of
  a ``geometric vertex decomposition'' [Knutson-Miller-Yong '07].

  The primary tool used is the corresponding limit {\em branchvariety} 
  [Alexeev-Knutson '06], which is reduced by construction, 
  and maps to the limit subscheme; 
  our technique is to use normality to show that the 
  branchvariety map must be an isomorphism.

  As a demonstration, we give an essentially na\"ive proof that 
  Schubert varieties in finite type are normal and Cohen-Macaulay. 
  The proof does not involve any resolution of singularities or
  cohomology-vanishing techniques (e.g. appeal to characteristic $p$).
\end{abstract}

{\small \tableofcontents}

\section{Statement of results}

\newcommand\PP{{\mathbb P}}
\renewcommand\AA{{\mathbb A}}
\newcommand\omicron{o}

Let $F \subseteq \PP^n \times S$ be a closed subscheme, flat over $S$,
considered as a family over $S$ of projective schemes. If $S$ is irreducible,
we can speak of the generic fiber of $F$, which throughout this paper
we assume to be (geometrically) reduced.

\subsection{General limits}
It is frequently useful to be able to guarantee that a particular fiber
$F_\omicron$ over a point $\omicron\in S$ is reduced.
Often its underlying set may be easy to calculate, but we may only
be able to check its reducedness generically, or in small codimension.

\newtheorem*{RLLemma}{Reduced Limit Lemma}

\begin{RLLemma}
  Let $F \subseteq \PP^n \times S$ be a flat family of $d$-dimensional 
  projective schemes (over a fixed Noetherian base scheme). 
  Let $S$ be irreducible and normal, and assume the generic fiber of
  $F\to S$ is {\em irreducible} (or at least,
  equidimensional and connected in codimension $1$)
  and geometrically reduced.

  Let $F_\omicron$ denote the fiber over a point $\omicron\in S$.
  Let $A_1,A_2,\ldots,A_k$ be the components of its reduction,
  automatically of dimension $d$. Assume that
  $F_\omicron$ is generically geometrically reduced and
  each $A_i$ is normal,
  and that at least one of the following holds:
  \begin{enumerate}
  \item $F_\omicron$ is irreducible ($k=1$),
  \item $F_\omicron$ has only two geometric components ($k=2$), 
    and $A_1 \cap A_2$ is reduced and irreducible,
  \item $F_\omicron$ is reduced through codimension $1$, and for each $i$,
    $A_i \cap (A_1 \union \cdots \union A_{i-1})$ is 
    equidimensional of dimension $d-1$, and reduced.
    (This may involve reordering the $\{A_i\}$.)
  \end{enumerate}
  Then $F_\omicron$ is reduced. 

  In case (1), the generic fiber is irreducible and normal.
  In case (2), if it is irreducible, it is normal.
  In cases (2) and (3) it at least satisfies Serre's condition $S_2$.
\end{RLLemma}

Case (1) is implied by a much older local version from \cite{Hironaka}; 
see also \cite{Kollar} and the references therein.
The conditions in the lemma seem difficult to weaken in cases (2) and (3),
as a few near-counterexamples may help demonstrate, though a
local version perhaps may be achievable using the results of \cite{rigid}.
In each of the following examples all conditions other than the
italicized one hold, but $F_\omicron$ is not reduced.
\begin{itemize}
\item {\em Generic fiber reducible.} Let $F_t$ be the union of
    two skew lines in $\PP^3$ at distance $t$ from one another;
    at $t=0$ let them cross at one point.
    In $F_0$, there is an embedded point at the crossing.
  \item {\em Special fiber not generically reduced.}
    Let a smooth plane conic degenerate to a double line. Then all other 
    conditions of case (1) hold.
  \item {\em $A_1\cap A_2$ reducible.} Let $X$ be a twisted cubic
    curve in $\PP^3$, degenerating to a planar union of a line and a
    conic, with an embedded point at one of the two points of
    intersection.  Then all other conditions of case (2) hold.
  \item {\em Nonprojective fibers.} 
    From the previous example,
    excise a generic $\PP^2$ passing through the other
    (the reduced) point of intersection.
    Then $X$ is a twisted cubic in $\AA^3$,
    degenerating to the union of a line and conic in $\AA^2$, 
    with an embedded point at the single point of intersection.
  \end{itemize}

  The third case in the Reduced Limit Lemma contains the first, as $k=1$.
  At $k=2$ it is slightly different from the second case; it is more
  generally applicable (in not requiring $A_1 \cap A_2$ irreducible)
  but harder to apply, in that one is required to check reducedness in
  codimension $1$ by other means.

  The condition ``$A_i \cap (A_1 \union \cdots \union A_{i-1})$ is
  equidimensional of dimension $d-1$'' also makes sense when the $\{A_i\}$ 
  are the facets of a simplicial complex; in that theory an ordering with 
  this property is called a {\em shelling}. 

We will recall what we need about Serre's conditions $S_k$ in section
\ref{ssec:serre}.
The conclusion of the Reduced Limit Lemma that the generic fiber
is $S_2$ has a simple extension:

\begin{Lemma}\label{lem:skgeneric}
  Assume the setup of the Reduced Limit Lemma (so in particular,
  $F_\omicron$ is reduced). Ask in addition that each component $A_i$
  of the special fiber $F_\omicron$ is $S_k$, and that each
  $A_i \cap (A_1 \union \cdots \union A_{i-1})$ is $S_{k-1}$.  
  Then $F_\omicron$ and the generic fiber are $S_k$.

  In particular, if each $A_i$ and 
  $A_i \cap (A_1 \union \cdots \union A_{i-1})$ are Cohen-Macaulay,
  then $F_\omicron$ and the generic fiber are Cohen-Macaulay.
\end{Lemma}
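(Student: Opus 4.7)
The plan is to first show that $F_\omicron$ itself is $S_k$ by induction on the number of components, and then to deduce that the generic fiber is $S_k$ from openness of the $S_k$ locus in a flat family. Write $Y_i = A_1 \union \cdots \union A_i$, so that $Y_k = F_\omicron$ (already reduced by the Reduced Limit Lemma). The workhorse will be the Mayer--Vietoris short exact sequence
\[
0 \to \calO_{Y_i} \to \calO_{Y_{i-1}} \oplus \calO_{A_i} \to \calO_{Y_{i-1} \cap A_i} \to 0,
\]
which is exact precisely because $Y_i$ is reduced.

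Fix a point $x \in Y_i$ and set $c = \dim \calO_{Y_i,x}$. If $x$ lies outside $A_i$ or outside $Y_{i-1}$, then locally $Y_i$ coincides with $Y_{i-1}$ or with $A_i$, and $S_k$ at $x$ is immediate from the inductive hypothesis or from the hypothesis on $A_i$. Otherwise, localize the sequence at $x$ and apply the standard depth inequality coming from the long exact sequence on local cohomology:
\[
\mathrm{depth}(\calO_{Y_i,x}) \geq \min\bigl(\mathrm{depth}(\calO_{Y_{i-1},x}),\ \mathrm{depth}(\calO_{A_i,x}),\ \mathrm{depth}(\calO_{Y_{i-1}\cap A_i,x}) + 1\bigr).
\]
The inductive $S_k$ bound on $\calO_{Y_{i-1},x}$, the hypothesized $S_k$ bound on $\calO_{A_i,x}$, and the hypothesized $S_{k-1}$ bound on $\calO_{Y_{i-1}\cap A_i,x}$ combine to give a lower bound of $\min(k,c)$, once one records that $Y_i,Y_{i-1},A_i$ are pure of dimension $d$ (so have local codimension $c$ at $x$) while $Y_{i-1}\cap A_i$ is pure of dimension $d-1$ (so has local codimension $c-1$). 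This is exactly the $S_k$ condition at $x$, completing the induction.

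For the generic fiber, the set $\{s\in S : F_s \text{ is } S_k \text{ at every point}\}$ is open in $S$ because $F\to S$ is flat; containing $\omicron$, it is nonempty and hence contains the generic point of $S$. The Cohen--Macaulay corollary is the case $k=d$, since a pure $d$-dimensional Noetherian scheme is Cohen--Macaulay iff it is $S_d$ (and a pure $(d{-}1)$-dimensional one iff $S_{d-1}$). The main delicate point is the dimension bookkeeping at the intersections: one really needs $Y_{i-1}\cap A_i$ to be pure of dimension $d-1$ wherever it is nonempty, without which the $+1$ in the depth inequality would be wasted. This is precisely the equidimensionality built into the Reduced Limit Lemma --- explicit in its case (3), and in case (2) forced by the connected-in-codimension-$1$ hypothesis on the generic fiber.
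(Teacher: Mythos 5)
Your proof is correct and follows essentially the paper's route: the induction over the shelling via the Mayer--Vietoris short exact sequence is exactly Proposition \ref{prop:sk} part (4) applied to equation (\ref{eqn:Fsi}), and the dimension bookkeeping you flag is handled the same way there. The only real difference is the special-to-generic transfer, where you invoke openness of the fiberwise $S_k$ locus (note this needs properness of $F\to S$, not flatness alone, to descend to an open subset of $S$ --- available here since the family is projective), while the paper cites Proposition \ref{prop:degens} part (2), whose proof is the same semicontinuity-plus-properness mechanism.
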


In the applications envisioned by the author (one of which will
occupy section \ref{sec:Schubert}), one starts with a general fiber,
constructs a one-parameter family over a punctured disc $S\setminus \omicron$,
and fills in the limit $F_\omicron$ by taking a certain closure. 
(Note that this construction requires that
the family be embedded, in order to have somewhere to take a closure.)
The Reduced Limit Lemma is then invoked to study this
{\em automatically flat} limit. A slightly different point of
view is taken in \cite{Kollar}, where one is given the family (so no
embedding is necessary) and one wants criteria to check whether it is flat.

\subsection{Geometric vertex decompositions}\label{ssec:GVDs}
We now describe a very specific sort of family which we proved some
results about already in \cite{KMY}. In this restricted case the
same techniques yield a stronger result, as the conditions to check
are particularly simple.

\newcommand\barX{\overline X}
Let $H\oplus L$ be a vector space, where $L$ is one-dimensional
(the letters are for Hyperplane and Line). 
Let $X \subseteq H\oplus L$ be a reduced, irreducible subvariety,
and consider its closure $\barX$ inside $H \times L\Pone$,
where $L\Pone = L\cup \{\infty\}$ denotes the projective completion of $L$.

Define the family
$$ F := \overline{ \{ (h,\ell,z) : (h,z^{-1} \ell) \in \barX \} }
\subseteq H\times L\Pone \times \AA^1,
\qquad\hbox{considered over }\AA^1. $$
If we let $\Gm$ act on $H\times L\Pone$ by scaling the second factor,
$z\cdot (\vec h,\ell) := (\vec h, z\ell)$, 
then $F_{z\neq 0} = z\cdot X$; 
every closed fiber but $F_0$ is isomorphic to $F_1 = \barX$.
This $F$ is automatically flat over $\AA^1$.

In the case that $F_0$ is reduced, we christened it a {\dfn geometric
  vertex decomposition} of $X$ in \cite{KMY}, as the splitting 
in equation (\ref{eq:gvdlemma}) below is
closely related to the splitting of a simplicial complex using a
``vertex decomposition''.

\newtheorem*{GVDlemma}{Geometric Vertex Decomposition Lemma}

\begin{GVDlemma}
  Let $\barX \subseteq H \times L\Pone$ be irreducible 
  and geometrically reduced, with $\Pi$ its projection to $H$ 
  and $\Lambda := \barX \cap (H\times \{\infty\})$. 
  Assume $\Lambda \subsetneq \barX$, i.e. $\barX$ is the closure
  of $X := \barX \cap (H\times L)$. Let
  $$ F := \overline{ \{ (h,\ell,z) : (h,z^{-1} \ell) \in \barX \} }
  \subseteq H\times L\Pone \times \AA^1. $$
  So far this is the general setup of \cite[theorem 2.2]{KMY},
  which asserts that
  \begin{equation}
    \label{eq:gvdlemma}
   F_0 = 
  (\Pi\times \{0\}) \cup_{\Lambda\times \{0\}} (\Lambda\times L\Pone)    
  \end{equation}
  as sets. Assume in addition that
  \begin{enumerate}
  \item the projection $\barX \to \Pi$ is generically $1$:$1$,
  \item $\Pi$ is normal, and
  \item $\Lambda$ is geometrically reduced.
  \end{enumerate}
  Then equation (\ref{eq:gvdlemma}) holds as schemes, i.e. $F_0$ is reduced,
  a geometric vertex decomposition of $X$.

  If $\Pi$ and $\Lambda$ are Cohen-Macaulay, then so are $F_0, \barX$, and $X$.
\end{GVDlemma}

In case (2) of the Reduced Limit Lemma, we required the intersection
$A_1 \cap A_2$ to be reduced; the analogue of this in the Geometric
Vertex Decomposition Lemma is the requirement that $\Lambda$ be reduced.
However, we do not need to require any analogues of the conditions 
that $A_2$ be irreducible and normal ($\Lambda$ may be neither),
and all the projectivity we need is in the $L\Pone$.

As in case (2) of the Reduced Limit Lemma, $\Lambda$ normal implies
that $\barX$ is normal. Indeed, this will be the case in our application
in section \ref{sec:Schubert}.
In other situations, though, $\Lambda$ is often only $S_2$,
so we give a more general criterion for normality of $\barX$:

\begin{Lemma}\label{lem:gvdnormal}
  Continue the situation of the Geometric Vertex Decomposition Lemma.

  Then every component of $\barX$'s singular locus 
  $\barX_{sing}$ of codimension $1$ in $\barX$
  is of the form $D\times L\Pone$,
  where $D \subseteq \Lambda_{sing}$ is codimension $1$ inside $\Lambda$.
  In particular, if $\dim \Lambda_{sing} < \dim \Lambda - 1$,
  there can be no such components $D$.

  If there are no such components, and $\Lambda$ is $S_2$, 
  then $\barX$ is normal. 
  In particular, $\Lambda$ normal implies $\barX$ also normal.
\end{Lemma}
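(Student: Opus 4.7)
The approach is to verify Serre's criterion: $\barX$ is normal iff it satisfies $R_1$ and $S_2$. First I establish $S_2$ by applying Lemma~\ref{lem:skgeneric} to the flat family $F \to \AA^1$, taking $A_1 = \Pi$ and $A_2 = \Lambda \times L\Pone$ as the two components of $F_0$. The setup of the Reduced Limit Lemma (case~(3)) is provided by the Geometric Vertex Decomposition Lemma itself: $F_0$ is reduced, and the intersection $A_1 \cap A_2 = \Lambda$ (embedded at $L = 0$) is reduced and equidimensional of dimension $d - 1$. The additional hypotheses needed for $k = 2$ are that $\Pi$ is $S_2$ (immediate from normality), that $\Lambda \times L\Pone$ is $S_2$ (a product of an $S_2$ scheme with a smooth one), and that the intersection $\Lambda$ is $S_1$ (automatic for a reduced scheme). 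Lemma~\ref{lem:skgeneric} then gives $S_2$ for the generic fiber $\barX$.

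For $R_1$, I would study the projection $\pi : \barX \to \Pi$, which is proper (since $L\Pone$ is projective) and birational, with $\Pi$ normal. Each fiber is a closed subscheme of the one-dimensional $L\Pone$, hence either finite or all of $L\Pone$. If $\pi^{-1}(h)$ is finite, then properness makes $\pi$ finite in a neighborhood of $h$, and Zariski's Main Theorem forces $\pi$ to be an isomorphism there, so the fiber is a single reduced point. Set $E := \{h \in \Pi : \pi^{-1}(h) = L\Pone\}$; then $E \subseteq \Lambda$ as sets, and since $\pi^{-1}(E) = E \times L\Pone$ is a proper closed subset of the $d$-dimensional irreducible variety $\barX$, we have $\dim E \leq d - 2$. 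Now let $C$ be an irreducible component of $\barX_{sing}$ of codimension $1$. Outside $\pi^{-1}(E)$ the map $\pi$ is a local isomorphism to $\Pi$, whose singular locus has codimension $\geq 2$ by normality, so $C \subseteq E \times L\Pone$. Combining $\dim C = d - 1$ with $\dim(E \times L\Pone) \leq d - 1$ forces equality and hence $\dim E = d - 2$; $C$ is then a top-dimensional irreducible component of $E \times L\Pone$, so has the form $D \times L\Pone$ with $D$ an irreducible component of $E$ of dimension $d - 2 = \dim \Lambda - 1$.

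The main obstacle is showing $D \subseteq \Lambda_{sing}$. I argue by contradiction: if $\Lambda$ were smooth at some $h \in D$, then $\barX$ would be smooth at $(h, \infty)$, contradicting $(h, \infty) \in C \subseteq \barX_{sing}$. Work locally at $(h, \infty)$, translating so $h = 0$ and using $t = 1/\ell$ on $L\Pone$ near $\infty$. Pick local generators $f_1, \ldots, f_m$ of the ideal of $\barX$ in the regular local ring at $(0, 0)$. Since $\{0\} \times L\Pone \subseteq \barX$ (as $0 \in E$), each $f_i(0, t)$ vanishes identically, so $\partial f_i/\partial t|_{(0, 0)} = 0$: the $t$-column of the Jacobian at $(0, 0)$ is zero. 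The remaining columns form precisely the Jacobian of the defining ideal of $\Lambda$ at $0$, whose rank is $n - d + 1$ by smoothness of $\Lambda$. But $n - d + 1$ equals the codimension of $\barX$ in $H \times L\Pone$, so the Jacobian criterion gives $\barX$ smooth at $(0, \infty)$ --- the desired contradiction. Assembling the pieces: when no such codimension-$1$ $D \subseteq \Lambda_{sing}$ exists (in particular when $\Lambda$ is normal, so $\Lambda_{sing}$ has codimension $\geq 2$ in $\Lambda$), $\barX$ is $R_1$, and together with $S_2$ above Serre's criterion yields $\barX$ normal.
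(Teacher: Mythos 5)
Your treatment of the structural statement and of $R_1$ is correct and genuinely different from the paper's. The paper takes a codimension-one component $C_1 \subseteq \barX_{sing}$, closes it up to a subfamily $C \subseteq F$ by the same recipe as $F$, slices with a general plane, and uses the nodal-curve connectedness machinery (lemma \ref{lem:gvdslice} part (2), which rests on lemma \ref{lem:nodes}) to force $C_1 = D\times L\Pone$. You instead argue directly on $\barX$: properness of $\barX \to \Pi$, generic $1$:$1$-ness, and normality of $\Pi$ make the projection an isomorphism away from the closed locus $E$ of $\Pone$-fibers (Zariski's Main Theorem, or finite birational onto normal), a dimension count then pins any codimension-one component of $\barX_{sing}$ inside $E \times L\Pone$ and forces it to be $D \times L\Pone$, and the Jacobian computation along the vertical $\Pone$'s shows that $\Lambda$ smooth at $h$ would make $\barX$ smooth at $(h,\infty)$, so $D \subseteq \Lambda_{sing}$. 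This route is more elementary --- it uses no degeneration and no curve slicing for this part --- and it makes explicit the implication ``$\barX$ singular along $D\times L\Pone$ implies $D \subseteq \Lambda_{sing}$'' which the paper asserts rather tersely. What the paper's route buys is uniformity: it reuses lemmas already needed elsewhere in the degeneration framework.

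The $S_2$ step as written has a gap. Lemma \ref{lem:skgeneric} assumes the full setup of the Reduced Limit Lemma: a flat family of \emph{projective} schemes in $\PP^n \times S$, with every component $A_i$ of the special fiber \emph{normal}. Here the fibers sit in $H \times L\Pone$ with $H$ affine, and $\Lambda \times L\Pone$ need not be normal (only $S_2$ of $\Lambda$ is assumed); moreover the special-to-generic transfer inside that lemma runs through proposition \ref{prop:degens}, whose proof genuinely uses properness (the closure of the non-$S_k$ locus must have nonempty special fiber). So the citation does not literally apply. The repair is the paper's own device: prove that $F_0$ is $S_2$ directly from proposition \ref{prop:sk}(4), as the union of the $S_2$ schemes $\Pi \times \{0\}$ and $\Lambda \times L\Pone$ along the reduced (hence $S_1$) scheme $\Lambda \times \{0\}$, and then pass to $\barX = F_1$ by closing up the non-$S_2$ locus $B_1 \subseteq \barX$ to a subfamily $B \subseteq F$ by the same recipe as $F$, observing that $B_0$ lies in the empty non-$S_2$ locus of $F_0$, and invoking lemma \ref{lem:gvdslice} part (1) to conclude $B_1 = \emptyset$ --- exactly as the paper handles Cohen--Macaulayness in the proof of the Geometric Vertex Decomposition Lemma. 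With that substitution, your argument is complete.
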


To apply this lemma, one determines the components $D$ of $\Lambda_{sing}$
of codimension $1$ in $\Lambda$,
and checks that $\barX$ either does not contain or
is generically nonsingular along $D\times L\Pone$. 

After the proof of this lemma (in section \ref{sec:proofs}) we give an
example showing the criterion is necessary, in which $\barX_{sing}$
contains such a component and $\barX$ is not normal.

\subsection{Structure of the paper}
We prove all these lemmas in section \ref{sec:proofs}.
The crucial notion used is that of the {\em limit branchvariety} \cite{AK},
which is a sort of reduced avatar of the limit subscheme, 
but much better behaved than its simple reduction. 
(A similar ``correction'' already appears in \cite[remark 4.2]{Kollar}.)
In the cases at hand, though, the limit branchvariety and limit
subscheme coincide, showing the limit subscheme is reduced.
We recall these and other more standard 
notions in section \ref{sec:prelim}.

In section \ref{sec:Schubert} we apply these lemmas to give an
inductive proof of the well-known result (see e.g. \cite{R})
that Schubert varieties in arbitrary finite-dimensional flag manifolds are
normal and Cohen-Macaulay.
In very brief, we flatly degenerate an affine patch on a Schubert variety,
invoke the Geometric Vertex Decomposition Lemma to show the limit scheme is
a union of two simpler patches, and use induction. 
In particular, the proof does not involve any resolution of singularities or
cohomology-vanishing techniques (e.g. appeal to characteristic $p$),
and we expect it to apply to other families of subvarieties
of flag manifolds.

\junk{
  Is there a Billey-Willems formula for $T$-invariant multiplicity-free
  subvarieties? I guess it only really lets one reduce to the case
  of subvarieties that contain all the fixed points.
}

\subsection{Acknowledgements}
We thank Valery Alexeev, Michel Brion, David Eisenbud, Tom Graber,
Johan de Jong, Shrawan Kumar, Ezra Miller, and Ravi Vakil for useful
discussions.

\section{Geometric preliminaries}\label{sec:prelim}

In this section we assemble some standard geometric results, with the
more technical lemmas to wait until section \ref{sec:proofs}.

\subsection{Serre conditions}\label{ssec:serre}

A scheme $X$ is called {\dfn $S_m$ at the point $x\in X$} if the local
ring at $x$ possesses a regular sequence of length $m$,
perhaps after extension of the residue field. This is
equivalent to the vanishing of the local cohomology groups
$H^i_m(k[X])$ for $i<m$ \cite[chapter 6]{LC}.
If $X$ is $S_m$ at every point, we just say {\dfn $X$ is $S_m$}.

These properties are related to many familiar geometric ones,
particularly in tandem with the following conditions called $\{R_j\}$.
An equidimensional scheme $X$ is $R_j$ if its singular locus has 
codimension $>j$.  In particular, $X$ is generically reduced iff it is $R_0$.

\begin{Proposition}\label{prop:sk}
  Let $X$ be an equidimensional scheme.
  \begin{enumerate}
  \item $X$ is reduced iff $X$ is $R_0$ and $S_1$.
  \item $X$ is normal iff $X$ is $R_1$ and $S_2$ (Serre's criterion).
  \item $X$ is Cohen-Macaulay iff $X$ is $S_{\dim X}$. 
  \item If $X$ is the union $A \union B$ of two closed subschemes, 
    where $A,B$ are $S_k$ and $A\cap B$ is $S_{k-1}$, then $X$ is $S_k$.
  \end{enumerate}
\end{Proposition}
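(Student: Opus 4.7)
My plan is to dispose of parts (1)--(3) by citation and concentrate real effort on the Mayer--Vietoris statement (4).

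For (1), an equidimensional Noetherian scheme is reduced if and only if it has no embedded components ($S_1$) and is generically reduced ($R_0$); this is classical (e.g. Matsumura, \emph{Commutative Ring Theory}, Thm.~17.3, or Eisenbud, \emph{Commutative Algebra}, Ex.~11.10). For (2), Serre's criterion ($R_1 + S_2 \Leftrightarrow$ normal) is Matsumura Thm.~23.8. For (3), Cohen--Macaulayness at a point means depth equals Krull dimension at that point, which is exactly $S_{\dim X}$ when $X$ is equidimensional, so this is essentially by definition given the local-cohomology reformulation that the proposition has already recalled.

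For (4), I would argue locally and use Mayer--Vietoris. Interpreting ``$X = A \cup B$'' scheme-theoretically, so that $I_X = I_A \cap I_B$, there is a short exact sequence of $\mathcal{O}_X$-modules
\[
0 \to \mathcal{O}_X \to \mathcal{O}_A \oplus \mathcal{O}_B \to \mathcal{O}_{A \cap B} \to 0,
\]
where $A \cap B$ is cut out by $I_A + I_B$. Fix a point $x \in X$ and let $\mathfrak{m}$ be its maximal ideal; apply local cohomology $H^i_\mathfrak{m}(-)$ to obtain the long exact sequence
\[
\cdots \to H^{i-1}_\mathfrak{m}(\mathcal{O}_{A \cap B,x}) \to H^i_\mathfrak{m}(\mathcal{O}_{X,x}) \to H^i_\mathfrak{m}(\mathcal{O}_{A,x}) \oplus H^i_\mathfrak{m}(\mathcal{O}_{B,x}) \to \cdots.
\]
By the hypothesis that $A$ and $B$ are $S_k$, the middle term vanishes for $i < k$; by the hypothesis that $A \cap B$ is $S_{k-1}$, the left term vanishes for $i - 1 < k - 1$, that is, for $i < k$. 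Squeezing in the long exact sequence forces $H^i_\mathfrak{m}(\mathcal{O}_{X,x}) = 0$ for $i < k$, which is the required $S_k$ condition at $x$.

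The main obstacle is a bookkeeping one: $S_m$ demands depth $\geq \min(m, \dim)$ rather than $\geq m$ outright, so I need to confirm the vanishings above at points where some component has small local dimension. If $\dim \mathcal{O}_{X,x} \geq k$, then both $A$ and $B$ contribute a piece of dimension $\geq k$ at $x$ (since $X$ is the union), and a standard comparison using the conormal/dimension sequence for $A \cap B \hookrightarrow A$ gives $\dim \mathcal{O}_{A \cap B,x} \geq k-1$, so the caps in the definition of $S_k$ do not spoil the range $i < k$; if $\dim \mathcal{O}_{X,x} < k$, the claim for $X$ at $x$ reduces to requiring depth equal to $\dim \mathcal{O}_{X,x}$, which follows from the same long exact sequence applied in the smaller range. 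Everything else is routine diagram chasing.
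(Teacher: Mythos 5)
Your parts (1)--(3) and your main line for (4) coincide with the paper's proof: the paper also cites Eisenbud for (1)--(2), takes (3) as the definition, and proves (4) with exactly the Mayer--Vietoris long exact sequence in local cohomology coming from $0 \to \mathcal{O}_X \to \mathcal{O}_A \oplus \mathcal{O}_B \to \mathcal{O}_{A\cap B} \to 0$, squeezing $H^i_{\mathfrak{m}}(\mathcal{O}_{X,x})$ between vanishing groups for $i<k$. So the core of your argument is the intended one.

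The problem is your final ``bookkeeping'' paragraph. The claimed bound $\dim \mathcal{O}_{A\cap B,x} \geq k-1$ is simply false: the intersection of two components can have arbitrarily large codimension (take $A=\{x_3=x_4=x_5\cdots=0\}$, $B=\{x_1=x_2=0\}$ in $\mathbb{A}^n$, meeting in a point), and no conormal or dimension sequence for $A\cap B \hookrightarrow A$ rules this out. Moreover, under the standard \emph{capped} Serre condition (depth $\geq \min(k,\dim)$) that you are trying to accommodate, statement (4) is actually false: for two $2$-planes in $\mathbb{A}^4$ meeting at the origin, $A$ and $B$ are Cohen--Macaulay hence $S_2$, the reduced point $A\cap B$ is $S_1$ in the capped sense, yet $A\cup B$ has depth $1$ at the origin and is not $S_2$. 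So no patch along the lines you sketch can succeed. The resolution is that the paper \emph{defines} $S_m$ at $x$ as the existence of a regular sequence of length $m$, i.e.\ $H^i_{\mathfrak{m}}(\mathcal{O}_{X,x})=0$ for all $i<m$ with no dimension cap; with that definition the hypothesis on $A\cap B$ excludes the example above (the point is only $S_0$), your long exact sequence argument applies verbatim at every point, and the extra paragraph is unnecessary. As written, though, that paragraph asserts a false dimension estimate and implicitly proves a false statement, so it should be deleted and replaced by an appeal to the paper's (uncapped) definition of $S_k$.
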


\begin{proof}
  \begin{enumerate}
  \item Exercise 11.10 of \cite{Ei}.
  \item Theorem 11.5 of \cite{Ei}.
  \item This is the usual definition.
  \item 
    For this we use the Mayer-Vietoris sequence on local cohomology
    \begin{align*}
      \ldots&\to& H^{k-1}_m(k[A \cap B]) &\to& H^{k-1}_m(k[A]+k[B]) &\to& H^{k-1}_m(k[A \cup B]) && \\
            &\to& H^{k  }_m(k[A \cap B]) &\to& H^{k  }_m(k[A]+k[B]) &\to& H^{k  }_m(k[A \cup B]) &\to&\cdots
    \end{align*}
    from \cite[chapter 3]{LC} to infer the necessary vanishing.
  \end{enumerate}
\end{proof}

\subsection{Flat families of projective varieties}

We record a proposition, seemingly well-known to the experts,
concerning the two-way flow of information between special and generic
fibers in a flat family.

\begin{Proposition}\label{prop:degens}
  Let $F \subseteq \PP^n \times S$ be a flat family over $S$ of 
  projective schemes.
  Assume that the base $S$ is irreducible and normal.
  \begin{enumerate}
  \item If the generic fiber is nonempty, then each special 
    fiber is also nonempty.
  \item If a special fiber satisfies Serre's condition $S_k$, 
    then the generic fiber does too.
  \item If the reduction of the generic fiber of $F$ is equidimensional,
    then the reduction of any fiber is, and the dimensions match.
  \item If the reduction of the generic fiber of $F$ is connected in
    codimension $1$, then the reduction of any fiber is.
  \end{enumerate}
\end{Proposition}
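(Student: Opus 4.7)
The plan is to handle the four assertions of Proposition~\ref{prop:degens} separately, as each draws on a different flatness-based mechanism, though all are rooted in the same principle: flatness of $F/S$ together with the irreducibility of $S$ forces each irreducible component of $F_{red}$ to dominate $S$, and hence propagates information between generic and special fibers. Throughout I will write $\eta$ for the generic point of $S$ and $o$ for a specialized point.

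Part~(1) will follow from the constancy of the Hilbert polynomial of fibers in a flat projective family over a connected base: if the polynomial is nonzero at $\eta$ it is nonzero everywhere. Equivalently, $F\to S$ is open (flat of finite presentation) and closed (proper), so its nonempty image is all of $S$. Part~(2) reduces to the standard semicontinuity result that the locus $\{s\in S : F_s\text{ is }S_k\}$ is open in $S$ for a flat projective morphism of Noetherian schemes (EGA IV, section 12); since this open set contains a special point and $S$ is irreducible, it contains $\eta$. Part~(3) begins with the observation that flatness, combined with $S$ integral, makes every associated point of $F$ lie over $\eta$, so every irreducible component $Z$ of $F_{red}$ dominates $S$. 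Writing $d=\dim F_\eta$, each $Z_\eta$ is a union of components of $(F_\eta)_{red}$ of dimension $d$ by hypothesis, so $\dim Z = d + \dim S$; the theorem on the dimension of fibers of a proper dominant morphism with irreducible source then forces every component of each $Z_o$ to have dimension at least $d$. Since flatness gives $\dim F_o = d$, the decomposition $F_o = \bigcup_i Z_{i,o}$ is equidimensional of dimension $d$, as is its reduction.

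Part~(4) is the subtle case. Using part~(3), every top-dimensional component of $(F_o)_{red}$ has dimension $d$, so codimension-$1$ intersections there have dimension exactly $d-1$. I would transfer the connectedness-in-codimension-$1$ hypothesis on $(F_\eta)_{red}$ to $(F_o)_{red}$ as follows: given an irreducible codimension-$1$ intersection $W$ between two components of $(F_\eta)_{red}$, take its closure $\overline W \subseteq F$; by the same flatness argument as in (3), $\overline W$ dominates $S$ and has dimension $(d-1)+\dim S$, so $\overline W_o$ has dimension at least $d-1$ and sits inside a corresponding intersection of components of $(F_o)_{red}$, producing a codimension-$1$ adjacency there. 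The main obstacle is that a component $Z_i$ of $F_{red}$ can restrict to a reducible $Z_{i,\eta}$ or $Z_{i,o}$, so a codimension-$1$ walk in the generic dual graph needs careful translation to one in the special dual graph; handling this bookkeeping requires knowing that each $Z_{i,o}$ is connected, which follows from Zariski's connectedness theorem applied using the normality of $S$, allowing its components to be chained through their own mutual codimension-$1$ intersections.
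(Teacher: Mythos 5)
Your parts (1)--(3) are correct, and (2)--(3) take genuinely different routes from the paper: for (2) you invoke openness of the fiberwise $S_k$ locus (EGA IV, 12.1.6) plus irreducibility of $S$, where the paper instead uses semicontinuity of local cohomology together with its part (1) applied to the closure of the non-$S_k$ locus of the generic fiber; for (3) you use associated points and fiber-dimension bounds, where the paper slices with a general plane and applies Zariski's connectedness theorem over the normal base. (For (3) you should reduce to a discrete valuation ring base before using the formulas $\dim Z = d + \dim S$ and the lower bound on fiber dimensions, where they amount to Krull's principal ideal theorem; over an arbitrary Noetherian normal base these dimension statements need such a reduction.)

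Part (4), however, has a genuine gap exactly at the point you flag. Your transfer step is fine: the closure of a codimension-$1$ intersection in the generic fiber specializes to a codimension-$\le 1$ intersection between \emph{some} component of $Z_{i,o}$ and \emph{some} component of $Z_{j,o}$. What remains is to chain together the several components of a single $Z_{i,o}$, i.e.\ to show that each $Z_{i,o}$ is connected \emph{in codimension $1$}. You assert this follows from Zariski's connectedness theorem, but that theorem only yields connectedness, which is strictly weaker: two $d$-dimensional components of $Z_{i,o}$ could meet in a single point. Since the case of irreducible generic fiber is the whole content of part (4), your argument at this step is circular rather than a bookkeeping matter. (A secondary issue: $Z_i \to S$ is not flat, and $Z_{i,\eta}$ is only irreducible, not necessarily geometrically connected, so even plain connectedness of $Z_{i,o}$ requires a Stein factorization argument or a field extension, not the flat-family form of the theorem.) The missing mechanism, which is how the paper proceeds, is slicing: cut the family with a general plane of codimension $d-1$ to obtain a family of curves; the sliced generic fiber is still connected precisely because the generic fiber is connected in codimension $1$ (a Bertini-type statement), the sliced special fiber is then connected by Zariski's theorem over the normal base, and connectedness of a general slice forces connectedness in codimension $1$ of $F_o$ --- for if $F_o = A \cup B$ with $\dim (A \cap B) \le d-2$, a general plane of codimension $d-1$ meets both $A$ and $B$ but misses $A \cap B$, disconnecting the slice. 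Without this (or an equivalent) upgrade from connectedness to codimension-$1$ connectedness, your chain of adjacencies cannot be completed.
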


\begin{proof}
  The map $F\to S$ hits a nonempty open set since the generic
  fiber is nonempty. Being also proper, the map is onto, 
  so each special fiber is nonempty.
  
  For the second claim, let $C_\eta \subseteq F_\eta$ be the
  non-$S_k$-locus of the generic fiber, and $C \subseteq F$ its
  closure to a flat family. 
  Since $S_k$ is a cohomology-vanishing condition and cohomology
  groups are semicontinous, $C_\omicron \subseteq F_\omicron$.
  By part (1), $C_\omicron$ empty implies $C_\eta$ empty.

  We now combine the technique of slicing with general planes 
  (perhaps after harmlessly extending the base field coming from
  the point $\omicron \in S$), and the following version
  of Zariski's Main Theorem: if a flat family of complete schemes 
  over a normal base has connected generic fiber, then all
  fibers are connected (see e.g. \cite{Chow}). 

  If the reduction of some fiber has a component of small dimension,
  we slice with a general plane to replace that component by points.
  Now the generic fiber is still irreducible
  (Bertini's theorem) hence connected, but the special fiber is disconnected,
  contradiction. This proves the third claim.
  
  To prove the fourth claim, slice with a general plane to replace $F$ 
  with a family of curves. The general fiber of this subfamily is still
  connected, so the special fiber of this subfamily is connected,
  hence the original special fiber was connected in codimension $1$.
\end{proof}

There are other contexts where part (1) holds, e.g.
the case that $F$ is a 
family of $\Gm$-invariant subschemes of $\PP^n \times \AA^k$, where $\Gm$ 
acts linearly on $\PP^n$ and $\AA^k$ and has only positive weights on $\AA^k$. 
With some work, one can extend this proposition (and the Reduced
Limit Lemma, which depends on it) to that context.

\subsection{Branchvarieties}

We recall the basic construction from \cite{AK} 
of a {\em limit branchvariety}. 

A {\dfn branchvariety $X$ of $Y$} is a map $\beta: X \to Y$ of schemes
such that $\beta$ is finite (proper with finite fibers) and $X$ is
(geometrically) reduced. In particular, any closed reduced subscheme 
of $Y$ is a branchvariety of $Y$; the prefix {\em branch} should be seen as
analogous to {\em sub}. 
The basic facts we need about branchvarieties
are collected in the following:

\begin{Theorem}
  \label{thm:AK}
  Let $F \to S$ be a flat family of subschemes of $Y$, 
  where $S$ is a normal one-dimensional base,
  for example $\Spec$ of a discrete valuation ring.
  Assume the fiber $F_\omicron$ is generically geometrically reduced, 
  and that all other fibers are geometrically reduced. 
  Let $S^\times := S\setminus \omicron$,
  and let $F^\times$ denote the restriction to $S^\times$.

  Then there exists uniquely a flat family $\wtF \to S$ 
  of branchvarieties of $Y$ extending $F^\times \to S^\times$, 
  and a natural finite map $\beta_\omicron: \wtF_\omicron \to F_\omicron$ 
  whose image is the reduction $(F_\omicron)_{red}$.
  This $\wtF$ may be constructed as the normalization of $F$ in
  the open set $F^\times$.
  
  This $\beta_\omicron$ induces a correspondence between the
  top-dimensional components of $\wtF_\omicron$ and $F_\omicron$, and
  is generically $1$:$1$ on each top-dimensional component.
  \junk{
    The degree of $\beta$, over a component $C$ of $F_\omicron$, is the 
    length of $F_0$ at the generic point of $C$. In particular $\beta$ is
    generically $1$:$1$ over $C$ iff $F_\omicron$ is generically reduced
    along $C$.
  }
\end{Theorem}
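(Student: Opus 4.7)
The proposed approach is to construct $\widetilde F$ as the normalization of $F$ indicated in the statement. Working affine-locally, take $S = \Spec R$ with $R$ a DVR with uniformizer $\pi$, and $F = \Spec A$ for $A$ a flat finite-type $R$-algebra; set $\widetilde A$ to be the integral closure of $A$ inside $A[\pi^{-1}]$, and $\widetilde F = \Spec \widetilde A$. Because $R$ is excellent, $\widetilde A$ is finite over $A$, so $\widetilde F \to F$ is a finite morphism that restricts to the identity on $F^\times$.

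Several easy properties follow. Geometric reducedness of the fibers of $F^\times \to S^\times$, together with flatness over the reduced base $S^\times$, forces $F^\times$ to be reduced, so $A[\pi^{-1}]$ is reduced and hence so is its subring $\widetilde A$. Since $\widetilde A \hookrightarrow A[\pi^{-1}]$ is $\pi$-torsion-free, $\widetilde A$ is flat over the DVR $R$, giving flatness of $\widetilde F \to S$.

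The main obstacle is showing that the special fiber $\widetilde F_\omicron = \Spec(\widetilde A / \pi)$ is reduced; this is where the hypothesis that $F_\omicron$ is generically reduced is essential. For each minimal prime $\mathfrak p$ of $(\pi)$ in $A$, corresponding to a top-dimensional component of $F_\omicron$, generic reducedness says $(A/\pi)_\mathfrak p$ is a field, so $\mathfrak p A_\mathfrak p = \pi A_\mathfrak p$, making $A_\mathfrak p$ a DVR with uniformizer $\pi$. Hence $A$ is already normal at $\mathfrak p$, so there is a unique prime $\mathfrak q$ of $\widetilde A$ over $\mathfrak p$, and $\widetilde A_\mathfrak q = A_\mathfrak p$; in particular $\pi$ has valuation exactly $1$ at $\mathfrak q$. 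Globally, $\widetilde A$ is a product of normal domains (being the integral closure of a reduced ring in its total ring of fractions) and so satisfies $S_2$; hence the non-zero-divisor $\pi$ has no embedded associated primes in $\widetilde A$. Combining, $\widetilde A / \pi$ is $R_0$ and $S_1$, that is, reduced.

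With reducedness of $\widetilde F_\omicron$ in hand, the rest is formal. The map $\beta_\omicron$ is the restriction of the finite morphism $\widetilde F \to F$ to special fibers; the bijection $\mathfrak p \leftrightarrow \mathfrak q$ between minimal primes of $(\pi)$ in $A$ and in $\widetilde A$, together with the isomorphism $\widetilde A_\mathfrak q \iso A_\mathfrak p$ of local rings, gives both the correspondence of top-dimensional components of $\widetilde F_\omicron$ and $F_\omicron$, and the fact that $\beta_\omicron$ is generically an isomorphism (hence $1$:$1$) on each such component, with set-theoretic image $(F_\omicron)_{red}$. For uniqueness, any other flat family $\widetilde F'$ of branchvarieties extending $F^\times$ acquires a canonical finite map to $F$; by the universal property of the integral closure this factors through $\widetilde F$, and equality follows from reducedness and agreement on the dense $S^\times$.
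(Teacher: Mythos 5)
Your route is a from-scratch reconstruction of the [AK] construction that the paper simply cites (the paper's own proof quotes \cite[theorem 2.5, corollary 2.6]{AK} for the first two paragraphs and only proves the component statement, via a Chow-class computation). Reproving the construction is legitimate, but two steps fail as written. The central one: your claim that $\widetilde A$ is a product of normal domains, ``being the integral closure of a reduced ring in its total ring of fractions,'' hence $S_2$, is false. $\widetilde A$ is the integral closure of $A$ in $A[\pi^{-1}]$, i.e.\ the \emph{relative} normalization of $F$ in $F^\times$, not the absolute normalization: it must restrict to $F^\times$ over $S^\times$, so whenever the (merely geometrically reduced) generic fiber is not normal, $\widetilde A$ is not normal and need not be $S_2$. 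Consequently your argument that $\pi$ has no embedded primes in $\widetilde A$ --- the $S_1$ half of reducedness of $\widetilde F_\omicron$, which is the real content of the theorem --- is unsupported. (It can be repaired: the classical determinant-trick argument, applied to $y=x/\pi$ for $\mathfrak q=(\pi\widetilde A: x)$ an associated prime, shows that in a ring integrally closed in its localization $\widetilde A[\pi^{-1}]$ every associated prime of $(\pi)$ becomes principal after localizing, hence has height one; but that argument is not in your text, and reducedness of the special fiber of the relative normalization is exactly what \cite[theorem 2.5]{AK} supplies.)

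Second, your $R_0$ argument and the asserted ``bijection'' of components only control primes $\mathfrak q$ of $\widetilde A$ lying over \emph{minimal} primes of $(\pi)$ in $A$: for each such $\mathfrak p$ you correctly get a unique $\mathfrak q$ with $\widetilde A_{\mathfrak q}\cong A_{\mathfrak p}$ a DVR. But nothing in the local analysis rules out a minimal prime of $\pi\widetilde A$ contracting to a non-minimal prime of $\pi A$ (a component of $\widetilde F_\omicron$ mapping into a proper closed subset of a component of $F_\omicron$); at such a prime the generic reducedness of $F_\omicron$ gives no information and $\widetilde A_{\mathfrak q}$ is not identified with any $A_{\mathfrak p}$, so both the reducedness of $\widetilde F_\omicron$ and the correspondence of top-dimensional components remain open. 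This is precisely where the paper's proof is global rather than local: it discards lower-dimensional components, invokes proposition \ref{prop:degens} (which requires projectivity of the fibers) to get equidimensionality of the special fibers, and then gets multiplicity one from the Chow-class identity $\beta_*([\widetilde F_\omicron])=[F_\omicron]$ of \cite[proposition 6.1]{AK}; your affine-local argument has no substitute for these inputs, and it also silently conflates ``minimal primes of $(\pi)$'' with ``top-dimensional components.'' Smaller points: finiteness of the integral closure is not free (you invoke excellence, which is not a hypothesis); you prove at best reducedness, not the geometric reducedness demanded of a branchvariety; and in the uniqueness step the universal property runs the other way --- one gets an inclusion of $\mathcal O_{\widetilde F'}$ into $\mathcal O_{\widetilde F}$ inside the pushforward from $F^\times$, i.e.\ a map $\widetilde F\to\widetilde F'$, not a factorization of $\widetilde F'\to F$ through $\widetilde F$; the step needs the observation that a flat family with reduced special fiber is already integrally closed in its generic fiber (this is \cite[lemma 2.1]{AK}, used in lemma \ref{lem:crucial}).
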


\begin{proof}
  The first two paragraphs are theorem 2.5 and corollary 2.6 of \cite{AK};
  the base change usually required in \cite[theorem 2.5]{AK} may be omitted by
  the assumption that $F_\omicron$ is generically geometrically reduced 
  (so each $m_i=1$ in the notation of \cite[theorem 2.5]{AK}).
  
  If $C^\times$ is a component of $F^\times$ not of top dimension, then
  its closures $C,\widetilde C$ inside $F,\wtF$ are flat subfamilies, 
  whose special fibers $C_\omicron,\widetilde C_\omicron$ are 
  therefore also not of top dimension (by proposition \ref{prop:degens}).
  So we can safely remove these components of $F$ without affecting
  the top-dimensional components of $F_\omicron,\wtF_\omicron$.
  Hereafter we work with the unions $F',\wtF'$ 
  of the top-dimensional components of $F$; call this dimension $d$.

  Then, again by proposition \ref{prop:degens},
  we find that $\wtF'_\omicron,(F'_\omicron)_{red}$ are also
  equidimensional of dimension $d$.
  By the finiteness of $\beta$, the image of a $d$-dimensional component
  of $\wtF'_\omicron$ is again of dimension $d$, hence a component
  of $F'_\omicron$. So far we have a function 
  from the 
  set of components of $\wtF'_\omicron$ (i.e. the top-dimensional 
  components of $\wtF_\omicron$) to the
  set of components of $F'_\omicron$ (i.e. the top-dimensional 
  components of $F_\omicron$).

  The map $\beta$ induces a top-degree Chow class 
  $\beta_*([\wtF_\omicron])$ on $F_\omicron$, which we can compute as 
  $$ \beta_*([\wtF_\omicron]) 
  = \sum_{D \subseteq \wtF_\omicron} \beta_*([D])
  = \sum_{D \subseteq \wtF_\omicron} 
  [\beta(D)] \deg\big(D \to \beta(D)\big) 
  = \sum_{E \subseteq F_\omicron} 
  [E] \sum_{D \subseteq \wtF_\omicron, \beta(D) = E}
  \deg\big(D \to E\big) $$
  where the sums are over top-dimensional reduced components,
  and $[Z]$ denotes the fundamental Chow class of the scheme $Z$.
  However, the Chow class shadow of the
  much more precise $K$-class statement \cite[proposition 6.1]{AK}
  tells us that $\beta_*([\wtF_\omicron]) = [F_\omicron]$,
  a fact already used in \cite{K06} in the case that $F$ is a
  degeneration to a normal cone.

  Finally, the fact that $F_\omicron$ is generically reduced tells us that
  its fundamental Chow class is simply
  $$ [F_\omicron] = \sum_{E \subseteq F_\omicron} 1\cdot [E], $$
  so for each top-dimensional component $E$ of $F_\omicron$, we have
  $$ \sum_{D \subseteq \wtF_\omicron, \beta(D) = E}
  \deg\big(D \to E\big) = 1. $$
  Hence there is only one component $D$ mapping to $E$, and the degree
  of the map is $1$.
\end{proof}

In fact the construction in theorem \ref{thm:AK} does not require the
assumption of generic reducedness of $F_\omicron$; the only
modification necessary is a certain ramified base change $S' \onto S$.
Since we assume generic reducedness in the Reduced Limit Lemma,
we didn't state here that only slightly more complicated 
but much more general result (which can be found in \cite{AK}).
We mention, though, that in that more general setup the map induced on the
sets of top-dimensional components still exists
but may be only surjective (as in example 3 of \cite{K06}).

By the uniqueness of the limit branchvariety, if the
limit subscheme is reduced, then it agrees with the limit branchvariety.
We now sharpen this to a local statement (that again, does not actually
require generic reducedness).

\begin{Lemma}\label{lem:crucial}
  Assume the setup of theorem \ref{thm:AK},
  and let $U_\omicron \subseteq F_\omicron$ be an open subset.
  Then the map 
  $\beta: \beta^{-1}(U_\omicron) \to U_\omicron$
  is an isomorphism iff $U_\omicron$ is reduced.
\end{Lemma}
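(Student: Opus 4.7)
The ``only if'' direction is immediate: if $\beta$ restricts to an isomorphism $\beta^{-1}(U_\omicron) \to U_\omicron$, then $U_\omicron$ is identified with an open subscheme of the reduced scheme $\wtF_\omicron$, so $U_\omicron$ is itself reduced.

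For the converse, the plan is to work locally, reduce to showing that $\beta \colon \wtF \to F$ is a \emph{global} isomorphism after shrinking, and then settle that by a one-line integral-closure computation. Since the assertion is local on $F$, I would first replace $F$ by the open subscheme $F \setminus (F_\omicron \setminus U_\omicron)$, reducing to the case $U_\omicron = F_\omicron$ (the entire special fiber reduced); it then suffices to show that $\beta$ is an isomorphism. Passing to an affine chart, write $F = \Spec A$ and $\wtF = \Spec B$ (affine because $\beta$ is finite), and let $t$ be a uniformizer of the DVR $R = \calO_{S,\omicron}$. By the construction of $\wtF$ in Theorem \ref{thm:AK}, $B$ is the integral closure of $A$ inside the overring $A[1/t]$; in particular $B$ is $A$-finite, and $A \hookrightarrow A[1/t]$ because flatness of $A$ over $R$ makes $t$ a non-zero-divisor on $A$. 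The lemma thus reduces to the claim that $A = B$.

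To prove this, take $b \in B$ and write $b = a/t^N$ with $a \in A$ and $N \ge 0$ chosen minimal. Suppose for contradiction $N \ge 1$. Since $b$ is integral over $A$, there is a monic relation
\begin{equation*}
  b^n + c_1 b^{n-1} + c_2 b^{n-2} + \cdots + c_n = 0, \qquad c_i \in A.
\end{equation*}
Multiplying through by $t^{nN}$ and rearranging gives
\begin{equation*}
  a^n = -c_1 t^N a^{n-1} - c_2 t^{2N} a^{n-2} - \cdots - c_n t^{nN} \in tA,
\end{equation*}
so $a^n \in tA$. Reducedness of $A/tA$ then forces $a \in tA$; writing $a = t a'$ gives $b = a'/t^{N-1}$, contradicting the minimality of $N$. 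Hence $N = 0$, $b = a \in A$, and $A = B$.

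The one point I expect to have to emphasize is the correct interpretation of ``normalization of $F$ in $F^\times$'': $B$ is the integral closure of $A$ inside the overring $A[1/t]$, \emph{not} inside the larger total ring of fractions of $A$. This is exactly what guarantees that every $b \in B$ has the form $a/t^N$, so that the divisibility cancellation above is available. Once $A = B$ has been verified on each affine chart, it glues to a global isomorphism $\wtF \xrightarrow{\sim} F$ on the shrunken $F$, and restriction to the special fiber gives the desired isomorphism $\beta^{-1}(U_\omicron) \xrightarrow{\sim} U_\omicron$.
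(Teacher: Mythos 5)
Your proposal is correct, and its overall skeleton (restrict to opens, use that the integral closure defining $\wtF$ commutes with passing to open subschemes, so $\beta^{-1}(U)$ is the normalization of $U$ in $U\setminus U_\omicron$) matches the paper's; the difference is in how the two key implications are settled. Where the paper cites \cite[lemma 2.1]{AK} for both directions --- and for ``isomorphism $\Rightarrow$ reduced'' must even argue around the fact that part (2) of that lemma only gives non-isomorphism after a base change --- you handle that direction by the one-line observation that $\beta^{-1}(U_\omicron)$ is open in the geometrically reduced $\wtF_\omicron$, and you replace the citation for the other direction by a direct computation: with $t$ a non-zero-divisor (flatness over the DVR) and $A/tA$ reduced, any $b\in A[1/t]$ integral over $A$ with $t^Nb\in A$, $N\geq 1$ minimal, yields $a^n\in tA$ for $a=t^Nb$, hence $a\in tA$ and a contradiction to minimality; so $A$ is already integrally closed in $A[1/t]$ and $\beta$ is an isomorphism over the shrunken $F$. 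Your emphasis that the normalization is taken inside $A[1/t]$ rather than the total ring of fractions is exactly the right reading of ``normalization of $F$ in $F^\times$'' (it is forced by $\wtF$ restricting to $F^\times$ over $S^\times$), so the cancellation step is legitimate. What your route buys is self-containedness and a cleaner ``only if'' direction; what the paper's buys is brevity and alignment with the general \cite{AK} machinery, which also covers the non-generically-reduced situation alluded to just before the lemma, where your trivial ``only if'' argument would need the base-changed branchvariety and is no longer immediate. One small point worth a sentence in a final write-up: for a general normal one-dimensional base one should first base-change to $\Spec \calO_{S,\omicron}$ (a localization, so it commutes with taking integral closure and does not affect the fiber at $\omicron$) so that on an affine chart $F^\times$ really is $\Spec A[1/t]$; the paper implicitly does the same.
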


\begin{proof}
  We may pick an open set $U\subseteq F$ 
  such that $U \cap F_\omicron = U_\omicron$.
  Then the lemma can be rephrased as ``for every $U\subseteq F$...''
  Now observe that the lemma holds for $U$ iff it holds for an open cover, 
  so it is enough to handle the case $U$ affine.

  Recall now the construction of $\wtF$: it is the
  normalization of $F$ in the open set $F \setminus F_\omicron$.
  Since normalization commutes with localization to open sets, 
  we see that $\beta^{-1}(U)$ is the normalization of $U$ in the open set
  $U \setminus U_\omicron$. 

  Part (1) of \cite[lemma 2.1]{AK} now says that $U_\omicron$ reduced
  implies that $\beta: \beta^{-1}(U) \to U$ is an isomorphism 
  (and in particular, induces an isomorphism of the fibers over $\omicron$).

  Part (2) of \cite[lemma 2.1]{AK} only says that $U_\omicron$ nonreduced
  implies that after some base change, which can change the normalization,
  does $\beta: \beta^{-1}(U') \to U'$ fail to be an isomorphism.
  In the case at hand, since $\beta^{-1}(U_\omicron)$ is reduced, 
  as in \cite[corollary 2.6]{AK} the base change can only extend the
  residue field, so the map is already not an isomorphism.
\end{proof}

This lemma gives a way to show $F_\omicron$ is reduced without
studying $F_\omicron$ directly; instead we may show that 
$\beta_\omicron: \wtF_\omicron \onto (F_\omicron)_{red}$ is an isomorphism.

\newcommand\FF{\mathbb F}
One of the very few surprises in moving beyond subvarieties of projective space
to branchvarieties is the failure of some Bertini theorems in characteristic 
$p$: for example the Frobenius map $\PP^1_{\FF_p} \to \PP^1_{\FF_p}$
is a branchvariety whose every hyperplane section is nonreduced.
In \cite[assumption 7.2]{AK} we got around this by assuming the
characteristic was $0$ or large enough, but we take a different tack here:

\begin{Lemma}\label{lem:bertini}
  Let $\beta: X \to \PP^n$ be a branchvariety, defined over a field,
  that is birational on each component.
  Then for a general plane $P \subseteq \PP^n$ (which may require
  extending the field), the ``plane section'' $\beta^{-1}(P) \subseteq X$ 
  is reduced, and itself a branchvariety of $\PP^n$ that is
  generically $1$:$1$ on each component.
\end{Lemma}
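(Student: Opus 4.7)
The plan is to reduce to the one-component case via a dimension count, then invoke a Bertini theorem for reducedness; the birationality hypothesis supplies precisely the separability condition needed to evade the Frobenius-type pathology in characteristic $p$.

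Decompose $X = \bigcup_i X_i$ into irreducible components, set $Y_i := \overline{\beta(X_i)}$, and let $V_i \subseteq X_i$ be the dense open on which $\beta|_{X_i}$ restricts to an isomorphism onto an open $U_i \subseteq Y_i$; such $V_i$ exists because $\beta|_{X_i}$ is birational. The bad locus $B := \bigcup_i (X_i \setminus V_i) \cup \bigcup_{i \neq j}(X_i \cap X_j)$ meets each $X_i$ in a proper closed subset, so (using also that distinct components have distinct images $Y_i$) the intersection $\beta(B) \cap Y_i$ has dimension strictly less than $\dim Y_i$ for every $i$.

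For each $i$, the morphism $\beta|_{X_i}: X_i \to \PP^n$ is birational onto $Y_i$, so the function-field extension $K(Y_i) \hookrightarrow K(X_i)$ is an isomorphism and \emph{a fortiori} separable. This is exactly the hypothesis that permits a Bertini reducedness theorem (e.g.\ Jouanolou's version) to apply in any characteristic: for a general plane $P \subseteq \PP^n$---obtained by intersecting finitely many general hyperplanes---the scheme-theoretic preimage $\beta^{-1}(P) \cap X_i$ is equidimensional and reduced. Generic position of $P$ also forces $\beta^{-1}(P) \cap B$ to have strictly smaller dimension than $\beta^{-1}(P) \cap X_i$, so the generic point of every irreducible component of $\beta^{-1}(P)$ lies in some $V_i$; there $\beta$ is a local isomorphism onto its image, yielding both generic reducedness and generic $1$:$1$-ness on every component.

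Finiteness of $\beta^{-1}(P) \to P \subseteq \PP^n$ is immediate from finiteness of $\beta$, so it remains only to upgrade generic reducedness to full reducedness. Each $\beta^{-1}(P) \cap X_i$ is $S_1$ because it is reduced, and applying Proposition~\ref{prop:sk}(4) inductively at $k=1$ (where the $S_{k-1} = S_0$ condition on intersections is vacuous) to the finite union $\beta^{-1}(P) = \bigcup_i (\beta^{-1}(P) \cap X_i)$ shows $\beta^{-1}(P)$ is $S_1$; combined with $R_0$, Proposition~\ref{prop:sk}(1) then yields reducedness. The main obstacle is the Bertini step: the Frobenius example shows reducedness can fail everywhere for inseparable maps, and the argument genuinely rests on the birationality hypothesis at precisely that point.
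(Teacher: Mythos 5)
Your overall strategy (birationality gives separability/unramifiedness at the generic points, which is exactly what a Jouanolou-type Bertini theorem needs to give reducedness of a general plane section, plus a dimension count for generic $1$:$1$-ness) is the same as the paper's. But your execution has a genuine gap in the reduction to irreducible components. The scheme $\beta^{-1}(P)$ is the plane section of $X$, and it only \emph{contains} the scheme-theoretic union $\bigcup_i\bigl(\beta^{-1}(P)\cap X_i\bigr)$: locally, if $R$ is the (reduced) coordinate ring with minimal primes $\mathfrak p_i$ and $h_1,\dots,h_c$ are the pulled-back linear forms, the section has ideal $(h_1,\dots,h_c)$ while the union of the component sections has ideal $\bigcap_i\bigl(\mathfrak p_i+(h_1,\dots,h_c)\bigr)$, and these differ in general (e.g.\ $V(x)\cup V(y)$ sliced by $V(x-y)$: the slice of the union is a length-two point, the union of the slices is reduced). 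Equality of the two, given that each piece is reduced, is \emph{equivalent} to the reducedness you are trying to prove, so writing ``$\beta^{-1}(P)=\bigcup_i(\beta^{-1}(P)\cap X_i)$'' and then running the Mayer--Vietoris argument of Proposition~\ref{prop:sk}(4) on the right-hand side begs the question: you have shown the union of the component sections is reduced (which is automatic anyway, since a scheme-theoretic union of reduced closed subschemes is reduced), not that the plane section of $X$ has no extra nilpotent or embedded structure along the pairwise intersections $X_i\cap X_j$, which a general plane \emph{must} meet whenever they are of large enough dimension. The missing ingredient is a Bertini statement applied to all of $X$ at once --- either Jouanolou's theorem applied directly to $\beta:X\to\PP^n$ (the paper's route: birational on each component implies generically unramified, and the hypothesis of \cite[Thm.~6.3(3)]{Jouanolou} is checked at all generic points simultaneously, so no gluing is ever needed), or a Bertini-$S_1$ theorem saying a general hyperplane section of $X$ acquires no embedded primes, combined with your generic reducedness.

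Two smaller points. Your parenthetical ``using also that distinct components have distinct images $Y_i$'' is not a hypothesis and is false in general (two components may map birationally onto the same subvariety); fortunately your dimension count does not actually need it, since $\beta$ is finite and each $X_i\setminus V_i$, $X_i\cap X_j$ is a proper closed subset of $X_i$. Also, passing from one hyperplane to a plane $P$ of higher codimension requires an induction in which the conclusion (reduced, generically $1$:$1$ on each component) re-supplies the hypothesis at the next step; the paper makes this reduction explicit, and you should too, since after one slice the components and their birationality data change.
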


\begin{proof}
  We may assume that $P$ is a hyperplane, as we can
  then use induction. 

  Since the map $X \to \beta(X)$ is birational on each component,
  it is unramified. So by \cite[Thm. 6.3 (3)]{Jouanolou}, a generic
  plane section $\beta^{-1}(P)$ of it is again geometrically reduced.

  \junk{
    What the usual Bertini theorems {\em do} guarantee us is that
    a general hyperplane section $\beta^{-1}(P)$ is $S_1$
    \cite[proposition 4.2]{EGA5}, and that $P \cap \beta(X)$ is reduced.
    By assumption, the map from $X \to \beta(X)$ is birational
    on each component, so the map $\beta^{-1}(P) \to P \cap \beta(X)$ is also.
    Hence $\beta^{-1}(P)$ is $S_1$ and generically reduced, therefore reduced.
    Now we use the algebraic closure of the base field to ensure it is
    geometrically reduced, as required in the definition of branchvariety.
  }
    
  A proper map $C \to D$ of irreducible varieties is generically $1$:$1$
  if {\em some} fiber is a (reduced) point; then the set of $d\in D$
  for which the fiber is a point is open in $D$.
  Since $P$ intersects this open locus in each component of $\beta(X)$,
  we see that $\beta^{-1}(P) \to P\cap \beta(X)$ is again generically $1$:$1$
  on each component.
\end{proof}

\subsection{Geometric vertex decompositions}

We described the setup, $F_1 = \barX \subseteq H \times L\Pone$ degenerating to
$F_0$, in section \ref{ssec:GVDs}. We now collect (and slightly refine)
the results we will need from \cite{KMY}, which partially describe $F_0$.

\begin{Theorem}\label{thm:GVDs}
  Let $X$ be a closed subscheme of $H\times L$, where $H$ is a hyperplane
  and $L$ is a line, and let $\barX$ be its closure in $H\times L\Pone$. 
  Let $\Pi \subseteq H$ be the image of $\barX$ under projection to $H$,
  and define $\Lambda \subseteq H$ 
  by $\Lambda \times \{\infty\} := \barX \cap (H\times \{\infty\})$.
  Consider the family
  $$ F := \overline{ \{ (h,\ell,z) : (h,z^{-1} \ell) \in \barX \} }
  \subseteq H\times L\Pone \times \AA^1 $$
  automatically flat over the $\AA^1$ factor.
  Then as sets,
  $$   F_0 = 
  (\Pi\times \{0\}) \cup_{\Lambda\times \{0\}} (\Lambda\times L\Pone) $$
  and the two agree as schemes away from $\Pi \times \{0\}$.

  If $X$ is irreducible and the projection $X\to \Pi$ is 
  generically $1$:$1$, then $F_0$ is generically reduced along $\Pi$.

  All of this holds if $H$ is not a vector space, 
  but is merely quasiprojective.
\end{Theorem}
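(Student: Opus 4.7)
The plan is to view the theorem as a small refinement of \cite[Theorem 2.2]{KMY}: the defining equations of $F$, its automatic flatness, and the set-theoretic decomposition of $F_0$ are all direct from that reference, so the present work lies in establishing three additions: (a) scheme-theoretic equality of $F_0$ with $(\Pi \times \{0\}) \cup_{\Lambda \times \{0\}} (\Lambda \times L\Pone)$ away from $\Pi \times \{0\}$; (b) generic reducedness of $F_0$ along $\Pi$ under the generic $1{:}1$ hypothesis; and (c) the observation that only the quasiprojectivity of $H$ is ever used.

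For (a), my main tool will be the $\Gm$-action $t \cdot (h,\ell,z) := (h, t\ell, tz)$ on $H \times L\Pone \times \AA^1$. This action preserves $F$ because the defining relation $(h, z^{-1}\ell) \in \barX$ is manifestly $\Gm$-invariant. The complement of $\Pi \times \{0\}$ in $F_0$ lies inside $H \times (L\Pone \setminus \{0\}) \times \{0\}$, and every $\Gm$-orbit there limits at the fixed locus $\{\ell = \infty,\, z = 0\}$. Equivariance therefore lets me reduce to computing the scheme structure of $F$ near points $(h, \infty, 0)$ with $h \in \Lambda$, and I would pass to the chart $\ell' = 1/\ell$ around $\ell = \infty$: the substitution $z^{-1}\ell = 1/(z\ell')$ exhibits $F$ there as the closure of the graph obtained by pulling $\barX$ back along $(h, \ell', z) \mapsto (h, 1/(z\ell'))$, from which one reads that $F_0$ equals $\Lambda \times L\Pone$ scheme-theoretically on this locus.

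For (b), I would compute the ideal of $F_0$ directly in the affine chart $H \times L \times \AA^1$. Any $f(h,\ell) = \sum_{i=0}^d f_i(h)\ell^i \in I(\barX)$ produces $\tilde f(h,\ell,z) := z^d f(h, z^{-1}\ell) = \sum_i f_i(h) z^{d-i}\ell^i \in I(F)$, whose restriction to $z=0$ is $f_d(h)\ell^d$. The generic $1{:}1$ hypothesis means that, after tensoring $I(\barX)$ with the function field $k(\Pi)$, the generic fiber of $\barX \to \Pi$ is cut out by some $\ell - \varphi$ with $\varphi \in k(\Pi)$; clearing denominators supplies an element of $I(\barX)$ of $\ell$-degree exactly one whose $\ell$-leading coefficient $g \in k[\Pi]$ is generically nonzero on $\Pi$. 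Its restriction to $z=0$ is $g(h)\ell$, which cuts out $\Pi \times \{0\}$ reduced on the dense open $\{g \neq 0\} \subseteq \Pi$, giving the claim.

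Point (c) is immediate since each step above, as well as the argument in \cite[Theorem 2.2]{KMY}, is local on $H$ and uses only the quasiprojective structure of $H \times L\Pone \times \AA^1$. The main obstacle I anticipate lies in (a): one must ensure that taking the closure in the definition of $F$ does not introduce spurious embedded structure along $\{z=0\} \cap \{\ell \neq 0\}$, and the $\Gm$-equivariance combined with the explicit coordinate change at $\ell = \infty$ is the cleanest tool I know for this verification.
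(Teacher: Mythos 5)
Your plan is a genuinely different route from the paper's, which stays entirely inside the Gr\"obner-basis algebra of \cite{KMY} (the limit ideal $I'$ of initial $y$-forms, the ideal $C$, their agreement after inverting $y$, and \cite[theorem 2.5]{KMY} for generic reducedness), but as written it has a gap exactly at the point you yourself flag as the main obstacle. In the chart $\ell'=1/\ell$, pulling $\overline X$ back along $(h,\ell',z)\mapsto (h,\,y'=z\ell')$ gives a closed subscheme $G$ that agrees with $F$ over $z\neq 0$ and visibly has $G_0=\Lambda\times{\mathbb A}^1_{\ell'}$; but $F$ is by definition the closure of the $z\neq 0$ locus, so a priori you only get $F\subseteq G$, hence $F_0\subseteq \Lambda\times L\Pone$ near $\infty$. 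The reverse inclusion --- that taking the closure does not shrink the special fiber, e.g.\ lose nilpotents of $\Lambda$ --- is precisely the $z$-saturation issue that the Gr\"obner machinery settles in the paper, and ``from which one reads'' does not address it. It is fixable: since $\overline X$ is the scheme-theoretic closure of $X$ from $H\times L$, none of its associated points lie on $H\times\{\infty\}$, and the map $(\ell',z)\mapsto z\ell'$ is flat, so $G$ has no associated points in $\{z=0\}$; therefore $G$ equals the closure of its own $z\neq 0$ locus, i.e.\ $G=F$ on this chart and $F_0=\Lambda\times{\mathbb A}^1_{\ell'}$ there. Your $\Gm$-equivariance step (the disagreement locus is closed, invariant, and misses $\ell=\infty$, hence empty) is fine, but some version of this associated-point/flatness argument must actually be supplied.

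Part (b) is also too quick: producing $g(h)\ell$ in the limit ideal only shows that, localized at the generic point $p$ of $\Pi\times\{0\}$, the limit ideal contains $\ell$; by itself $g\ell$ cuts out $H\times\{0\}$, not $\Pi\times\{0\}$. You also need $I(\Pi)$ in the localized limit ideal, which comes from the $\ell$-degree-zero elements of $I(X)$ (the elimination ideal, using that $\overline X$ is reduced so its scheme-theoretic image in $H$ is $\Pi$ with its reduced structure); then the chain $p\supseteq I(F_0)\supseteq(\hbox{ideal of initial $\ell$-forms})$ localizes at $p$ to equalities, giving generic reducedness. (The paper simply cites \cite[theorem 2.5]{KMY} here, so your direct argument is a nice alternative once completed.) Finally, for (c), your own steps are indeed local on $H$ and compatible with closed embeddings, but the inputs you import from \cite[theorem 2.2]{KMY} (flatness and the set-theoretic decomposition) are proved there over a polynomial ring; ``local on $H$'' alone does not cover a general quasiprojective $H$, and you still need the paper's two reductions (pass to a closed $H'\supseteq\Pi$ inside a bigger $H$, and cover by affine patches embedded in affine space).
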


\newcommand\<{\langle}
\renewcommand\>{\rangle}

\begin{proof}
  This will be a slight variation of \cite[theorem 2.2]{KMY}, in turn
  based on the algebra from \cite[theorem 2.1]{KMY}, which uses 
  coordinates $\{x_1,\ldots,x_n\},\{y\}$ on $H,L$.
  Let $I$ be the ideal defining $X$.
  That theorem makes use of a 
  Gr\"obner basis $\{y^{d_i} q_i + r_i \mid i=1\ldots m\}$ of $I$, 
  with respect to a term order that picks out a term from the 
  initial $y$-form $y^{d_i} q_i$ of $y^{d_i} q_i + r_i$. 
  Theorem 2.1 also defines the ideals
  $$  I' = \<y^{d_i} q_i \mid i = 1,\ldots,m\>, \quad C = \<q_i
      \mid i = 1,\ldots,m\>, \quad P = \<q_i \mid d_i = 0\> + \<y\> $$

  For our first step, we introduce a coordinate $y'$, the denominator 
  coordinate 
  on $L\Pone$. Then we homogenize the generators of $I$ in $\{y,y'\}$, 
  meaning that each term in each $r_i$ is multiplied
  by the right power of the new $y'$ to make the generator
  $y^{d_i} q_i + r_i$ homogeneous in $\{y,y'\}$. This is the algebraic
  counterpart of defining $\barX$ as the closure of $X$.
  Call this $\{y,y'\}$-homogeneous ideal $I_h$.

  Then $\infty \in L\Pone$ is defined by the equation $y'=0$,
  so $\Lambda \times \{\infty\} := \barX \cap (H \times \{\infty\})$ 
  is defined by the ideal $I_h + \<y'\>$.
  Projecting to $H$ amounts to inverting $y$ and dropping the variables $y,y'$,
  which gives us the ideal $C$. If we reintroduce $y,y'$ as free variables,
  we get the ideal defining $\Lambda \times L\Pone$.
  
  As was observed in \cite[theorem 2.2]{KMY}, the limit $F_0$ is defined by 
  the ideal $I'$. Upon inverting $y$, the ideals $I'$ and $C$ coincide,
  which is the statement that $F_0$ and $\Lambda\times L\Pone$ agree
  (as schemes) away from $H \times \{0\}$.

  We can study $F_0$ away from $H\times \{\infty\}$ by passing to $y'=1$;
  this recovers the affine situation in \cite[theorem 2.2]{KMY},
  which tells that 
  $$ F_0 \setminus (H\times \{\infty\}) =
  (\Pi\times \{0\}) \cup_{\Lambda\times \{0\}} (\Lambda\times L) 
  \qquad \text{(not $L\Pone$)} $$
  as sets. 

  If $X$ is irreducible and the map $\barX \to \Pi$ is generically $1$:$1$,
  then it is a degree $1$ map,
  and from \cite[theorem 2.5]{KMY} we learn that 
  $F_0 \setminus (H\times \{\infty\})$ 
  is generically reduced along $\Pi\times \{0\}$.
  Then the same statement holds for $F_0$.

  {\bf $H$ quasiprojective rather than linear.}
  The stated result makes sense for $H$ an arbitrary scheme,
  not just a vector space, and it is easy to see that 
  \begin{enumerate}
  \item if the result holds for a scheme $H$, and $H'$ is
    a subscheme with $H \supseteq H' \supseteq \Pi$, then the result
    holds for $H'$
  \item if the result holds for each patch in an open cover of $H$,
    then it holds for $H$.
  \end{enumerate}
  By (1) one can reduce to the case that $H$ is projective space,
  and by (2) one can reduce to the already treated case 
  that $H$ is affine space.
\end{proof}

The quasiprojectivity assumption seems very unlikely to be necessary.
We did not pursue its removal for two reasons:
to do so would involve extending the theory of Gr\"obner bases 
beyond polynomial rings (or replacing the argument altogether), and 
our application in section \ref{sec:Schubert} only uses $H$ linear anyway.

\section{Proofs}\label{sec:proofs}

\subsection{Preliminaries}\label{ssec:prelim}

We start with a lemma about gluing schemes together along closed subschemes.
\newcommand\tX{\widetilde X}

\begin{Lemma}\label{lem:gluing}
  Let $A,B,X$ be schemes with a map $A\coprod B \onto X$
  such that $A \to X$, $B \to X$ are embeddings; hence we can identify
  $A,B$ with their images in $X$. Let $C$ be the intersection of
  $A$ and $B$ in $X$. Then $X$ (plus the inclusions $A,B \to X$)
  is determined up to unique isomorphism by $C \subseteq A,B$.

  Moreover, if the map factors as $A \coprod B \onto X' \onto X$, 
  with $C' = A\cap_{X'} B$, then $C' \subseteq C$, with equality iff
  the map $X' \to X$ is an isomorphism.
\end{Lemma}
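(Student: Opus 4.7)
The plan is to exhibit $X$ as the categorical pushout $A\sqcup_C B$ in the category of schemes, via the affine-local formula $\calO_X = \calO_A\times_{\calO_C}\calO_B$. Once this formula is in hand, both the uniqueness clause and the moreover clause follow essentially formally.

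First I would unpack the hypothesis scheme-theoretically: $A,B\into X$ being closed embeddings with $A\coprod B\onto X$ a surjection forces the injection $\calO_X\into \calO_A\oplus \calO_B$, i.e.\ $I_A\cap I_B = 0$ for the ideal sheaves, while $C$ has ideal sheaf $I_A+I_B$. On an affine patch $\Spec R\subseteq X$ with $R_A := R/I_A$, $R_B := R/I_B$, $R_C := R/(I_A+I_B)$, the canonical map
$$ R \;\longrightarrow\; R_A\times_{R_C}R_B $$
is injective by $I_A\cap I_B = 0$. Surjectivity is a one-line diagram chase: given $(\bar a,\bar b)$ with common image in $R_C$, lift $\bar a$ to $\tilde a\in R$, observe that $\tilde a-\bar b\in R_B$ lies in $(I_A+I_B)/I_B$, represent it by some $i_A\in I_A$, and check that $\tilde a-i_A$ is the simultaneous lift.

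Gluing these affine-local pushout identifications determines $X$ together with its inclusions of $A$ and $B$ uniquely, up to unique isomorphism, from the pair $C\into A$ and $C\into B$, proving the first assertion. For the moreover clause, the identical analysis applied to $X'$ yields $\calO_{X'}=\calO_A\times_{\calO_{C'}}\calO_B$. Since $\calO_C=\calO_A\otimes_{\calO_X}\calO_B$ and $\calO_{C'}=\calO_A\otimes_{\calO_{X'}}\calO_B$, the ring map $\calO_X\to\calO_{X'}$ induced by the factorization $A\coprod B\onto X'\onto X$ produces a canonical surjection $\calO_C\onto\calO_{C'}$, i.e.\ the closed embedding $C'\into C$. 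If $C'=C$, the two fiber-product rings coincide, so $\calO_X\to\calO_{X'}$ is forced to be an isomorphism; the converse is immediate.

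The only delicate point, more than a serious obstacle, is pinning down the scheme-theoretic content of ``surjection'' so that $I_A\cap I_B=0$ emerges cleanly; after that the rest is a routine fiber-product identification in commutative algebra followed by the standard universal property of pushouts.
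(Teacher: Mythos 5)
Your proof is correct and takes essentially the same route as the paper's: reduce to an affine patch, identify the coordinate ring with the fiber product $R/I_A \times_{R/I_C} R/I_B$ using $I_A\cap I_B=0$ (the paper states this as "$R$ is the inverse limit of $R/I_A, R/I_B \to R/I_C$", and your diagram chase merely makes the surjectivity explicit), and deduce both claims from that identification. The only cosmetic difference is in the "moreover" clause, which you settle by comparing the two fiber-product presentations of $\calO_X$ and $\calO_{X'}$, while the paper argues via the pullback squares and the first claim --- same substance.
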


\begin{proof}
  If $U \subseteq X$ is open, then $(A\cap U)\coprod (B\cap U) \to U$
  satisfies the same conditions. Conversely, if the statement holds for
  each $U$ in an open cover of $X$, then it holds for $X$. So we can
  restrict to the case $A,B,X$ affine, with $X = \Spec R$.

  Let $I_A,I_B,I_C$ be the ideals defining $A,B,C$.
  Then $I_C = I_A+I_B$ by definition. 
  The condition $X = A\cup B$ says that $I_A \cap I_B = 0$.
  Then $R$ is the inverse limit of $R/I_A, R/I_B \to R/I_C$, 
  and hence determined up to unique isomorphism by $C \subseteq A,B$.
  
  For the second claim, consider the diagram $A,B \to X' \to X$.
  Then the pullback $C'$ of $A,B\to X'$ automatically maps to 
  the pullback $C$ of $A,B\to X$, and since the inclusion $C' \to A$ 
  factors as $C' \to C \to A$, the map $C' \to C$ is an inclusion.
  By the first claim, $X,X'$ determine and are determined by the
  subschemes $C,C'$ of $A$ and $B$, so the map $X' \to X$ is an
  isomorphism iff the inclusion $C' \into C$ is an isomorphism.   
\end{proof}

The next lemma will be our source of normality for a generic fiber.
We take a moment to recall the difference between the {\em generic} fiber
of a family over an irreducible base $S$, which is the fiber over the
generic point of $S$, and a {\em general} fiber, whose definition
only makes sense if $S$ has enough closed points to have ``general'' ones.
In particular, $S$ should not be local, and should typically be defined
over an infinite field.

A general fiber of $\AA^1 \to \AA^1, z\mapsto z^2$ is reducible, but the 
generic fiber is the generic point of the source $F = \AA^1$, so irreducible.
A tighter analogue is provided by the {\dfn geometric generic fiber}
of $F\to S$, made by base-changing $F$ using the algebraic closure
of the function field of $S$. In particular, while the general fibers
and the geometric generic fiber behave well under many base changes,
the generic fiber can go from irreducible to reducible.

For a reduced complete (though possibly disconnected) 
curve $C$ with at worst nodal singularities,
let $\Gamma(C)$ denote its {\dfn graph of components} (as in e.g.
\cite{Oda}), with vertex set the set of components of $C$, and edge
set the set of nodes of $C$. There may be multiple edges between 
two vertices, and a singular component gives a vertex with self-edges.
The graph is connected iff the curve itself is.

\begin{Lemma}\label{lem:nodes}
  Let $F \to S$ be a flat family of at-worst-nodal geometrically reduced 
  curves over an irreducible normal base. 
  Assume that $F_{sing} \to S$ is proper, e.g. if $F$ itself is.
  Then there is a natural injection 
  $ \Sigma : edges(\Gamma(F_\eta)) \to edges(\Gamma(F_\omicron)) $
  from the nodes of the generic fiber to the nodes of the special fiber.

  Now assume that $F\to S$ itself is proper.
  If the generic fiber is connected, then $\Gamma(F_\omicron)$ is connected.
  If the geometric generic fiber is irreducible,
  then $\Gamma(F_\omicron)$ remains connected even when the edges in
  the image of $\Sigma$ are removed.
\end{Lemma}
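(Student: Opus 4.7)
The plan is to construct $\Sigma$ from the limiting behavior of the nodes of $F_\eta$, handle case (2a) by quoting Zariski connectedness as in Proposition~\ref{prop:degens}(4), and handle case (2b) by passing to a partial normalization of $F$ along the sections produced by $\Sigma$ and again invoking Zariski connectedness.

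To construct $\Sigma$, I would first observe that since each fiber is at-worst-nodal, the map $F_{sing}\to S$ has finite fibers and is therefore finite by the properness hypothesis. Let $p_1,\ldots,p_k$ be the nodes of $F_\eta$, and let $C_i$ be the reduced closure of $\{p_i\}$ in $F_{sing}$. Each $C_i\to S$ is finite and birational (the generic fiber being the single reduced point $p_i$), so since $S$ is normal, Zariski's Main Theorem identifies $C_i$ with $S$ and supplies a section $\sigma_i:S\to F$; put $\Sigma(p_i):=\sigma_i(\omicron)$. To confirm that $\Sigma(p_i)$ is a node of $F_\omicron$, examine the completed local ring of $F$ at $\sigma_i(\omicron)$, which by versality of the node takes the form $\widehat O_{S,\omicron}[[x,y]]/(xy-g)$ for some $g\in\widehat O_{S,\omicron}$. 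The fact that the section $\sigma_i$ lies entirely in $F_{sing}$ forces both $g$ and $dg$ to vanish along $\sigma_i(S)$, so $g\equiv 0$ and hence locally $F\cong V(xy)\times S$ with $\sigma_i(S)=\{(0,0)\}\times S$. Injectivity is then immediate: if $\sigma_i(\omicron)=\sigma_j(\omicron)$ then the two sections coincide with $\{(0,0)\}\times S$ in a neighborhood and therefore, by integrality of $S$, everywhere, contradicting $p_i\ne p_j$.

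Case (2a) follows directly from Proposition~\ref{prop:degens}(4) applied to $F\to S$: connectedness of $F_\eta$ propagates to $F_\omicron$, and the graph $\Gamma(F_\omicron)$ is connected exactly when $F_\omicron$ is topologically connected. For case (2b), I would construct the partial normalization $\tilde F\to F$ along the codimension-one subscheme $\bigcup_i\sigma_i(S)$: the local model $F\cong V(xy)\times S$ makes this unambiguous, with $\tilde F$ locally equal to $V(x)\times S\,\sqcup\, V(y)\times S$ and equal to $F$ elsewhere. Then $\tilde F\to S$ is proper and flat, and its geometric generic fiber $\tilde F_{\bar\eta}$ is the normalization of the geometrically irreducible nodal curve $F_{\bar\eta}$, so is smooth and irreducible, in particular connected. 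Zariski connectedness applied to $\tilde F\to S$ then forces $\tilde F_\omicron$ to be connected. But $\tilde F_\omicron$ is by construction the partial normalization of $F_\omicron$ at the nodes $\{\Sigma(p_i)\}$, and its connectedness translates combinatorially into the desired statement that $\Gamma(F_\omicron)$ remains connected after removing the edges in the image of $\Sigma$.

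The main obstacle I anticipate is the honest construction of $\tilde F$ as a flat proper family over $S$: while the local pieces are transparent, patching them so that no ``accidental'' singularities of $F$ away from the sections get resolved (as could happen with the full normalization, e.g.\ of $V(xy-s^2)$, whose normalization fails to be flat over $S$) takes some bookkeeping. A workaround is to invoke the full normalization $F^\nu\to F$ directly; its special fiber still surjects onto the desired partial normalization of $F_\omicron$, so connectedness of $F^\nu_\omicron$ transfers to the partial normalization, trading the flatness question for a slightly more delicate factorization argument.
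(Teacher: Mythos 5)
Your construction is essentially the paper's own: take the closure of each node of $F_\eta$ inside $F_{sing}$, use properness/finiteness to produce a point of the special fiber, show that along the resulting section the family is formally a trivial deformation of the node (your $xy-g$ computation is the paper's appeal to the universal deformation $\{xy=t\}$), then detach the nodes and apply Zariski connectedness to the modified family, whose geometric generic fiber is the normalization of an irreducible curve and hence still connected. Your partial normalization along the sections is, in this locally trivial situation, exactly the paper's blow-up along the subfamilies $N$, so the ``patching/flatness'' worry in your last paragraph is not a real obstacle: both constructions are local products near the sections and agree with $F$ elsewhere.

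The one step that does not hold as written is the claim that $C_i \to S$ is birational because ``the generic fiber is the single reduced point $p_i$''. The generic fiber of $C_i \to S$ is $\operatorname{Spec} k(p_i)$, and the residue field $k(p_i)$ may be a nontrivial finite extension of $k(\eta)$ (for instance two geometric nodes conjugate over $k(\eta)$ forming one closed point of $(F_\eta)_{sing}$); then $C_i \to S$ has degree greater than $1$, Zariski's Main Theorem yields no section, $C_{i,\omicron}$ need not be a single node, and the split local model $\widehat{O}_{S,\omicron}[[x,y]]/(xy-g)$ also presupposes that the node is rational (and split). This is exactly the point the paper handles by its preliminary base change around the special fiber making $F_{sing}\to S$ unramified/split, which is harmless here because the special fiber is assumed geometrically reduced and the irreducibility hypothesis is placed on the geometric generic fiber. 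With that base change inserted at the start, your argument goes through and coincides with the paper's.
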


\begin{proof}
  The finite map $F_{sing} \to S$ may be ramified; 
  perform base changes around the special fiber to make it unramified. 
  By the assumptions (used here only) that the geometric generic fiber
  is irreducible and the special fiber is geometrically reduced, after
  this finite base change the generic fiber will stay irreducible (if it was)
  and the special fiber will stay reduced.

  Given a node $N_\eta$ in the generic fiber, take its closure in $F$
  to get a subfamily $N$ lying in the singular locus $F_{sing}$. 
  Define $\Sigma(N_\eta) := N_\omicron \in (F_\omicron)_{sing}$,
  which exists by the assumed properness of $F_{sing} \to S$.
  This map $\Sigma$ is an injection, since 
  $N \cap N' \neq \emptyset$ implies $(N \cup N')_\eta$ is a fat point
  sitting inside $(F_\omicron)_{sing}$, but that is reduced.
  
  If the generic fiber is connected, then by proposition \ref{prop:degens}
  the special fiber is too, making its graph connected.
  In the rest we assume that the generic fiber (after the above
  base change) is irreducible, which is implied by the geometric
  generic fiber being irreducible.

  By assumption $N \to S$ is unramified.
  A formal neighborhood of $N$ inside $F'$ is essentially a deformation
  over $S$ of the singularity $\{xy=0\}$. It is easy to compute the
  universal deformation $\{xy=t\}$ of this formal singularity, and show that
  the formal neighborhood of $N$ is a trivial family over $S$
  (for any $t\neq 0$, the deformation would smoothe entirely).
  So if we blow up $F$ along $N$, it simply detaches that node
  in each fiber.\footnote{%
    In general, blowing up a flat family along a flat subfamily does
    not commute with passage to fibers. For example, if we blow up the
    family $\{(x,y,z) : xy = z^2\}, (x,y,z) \mapsto z$ along the
    section $\{(z,z,z)\}$, the $z=0$ fiber $\{xy=0\}$ acquires a new $\Pone$
    component connecting the now-disjoint axes $x=0,y=0$.}

  We now blow up $F$ along every subfamily $N$ coming from a node of $F_\eta$.
  (Note that no two 
  intersect, or else the singular locus of $F_\omicron$ would be
  nonreduced where two collided, but it is reduced. So there is no
  worry about the order in which they are blown up.) 
  The generic fiber stays irreducible under these blowings-up, 
  hence connected, so by proposition \ref{prop:degens} (which requires
  the normal base) the special fiber and its graph stay connected.
  Its new graph is the old one $\Gamma(F_\omicron)$ with the
  edges in the image of $\Sigma$ removed.
\end{proof}

One easy corollary of this is that if $F$ is a family of projective
curves and $\Gamma(F_\omicron)$ is a tree,
and the geometric generic fiber is irreducible, it can have no nodes
and must be normal.
There is another proof of this, explained to us by Johan de Jong
and Valery Alexeev. 
The Jacobian of an at-worst-nodal curve is an abelian variety iff its
graph is a tree (see \cite[Proposition 10.2]{Oda}).
The locus of abelian varieties is open in the Picard scheme, so the
condition of having an abelian variety as one's Jacobian is an
open condition on the fibers. Hence $\Gamma(F_\eta)$
is also a tree. By the assumption of irreducibility, that graph 
has only one vertex, and by treeness, no self-edges. So the generic
fiber is one normal component, QED.

\junk{
  Unfortunately, when we slice a higher-dimensional example, the
  associated graph has many edges between two vertices and is not a tree,
  so we need the following more involved argument.
}

We now prove a higher-dimensional version of that corollary,
for which we didn't see a Jacobian-based proof.

\begin{Lemma}\label{lem:genfibnormal}
  Let $F \to S$ be a flat family of reduced projective schemes over 
  a normal irreducible base. Assume that the geometric generic fiber 
  is irreducible.

  Assume that a special fiber $F_\omicron$ has only normal components
  $C_1,\ldots,C_n$, where for each $i$ there exists $J(i)<i$ such that
  $$ C_i \cap (C_1 \union \ldots \union C_{i-1}) = C_i \cap C_{J(i)} $$
  and this intersection $C_i \cap C_{J(i)}$ is irreducible and
  generically reduced.  Finally, assume that $C_i \cap C_j \cap C_k$
  has dimension at most $\dim F_\omicron - 2$ for $i,j,k$ distinct.

  Then the generic fiber is normal.
\end{Lemma}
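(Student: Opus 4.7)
The plan is to reduce to a DVR base and verify Serre's criterion $R_1 + S_2$ for $F_\eta$. Replacing $S$ with $\Spec R$ for $R$ a DVR dominating $\mathcal{O}_{S,\omicron}$ and having fraction field $k(S)$, I may assume $S$ is one-dimensional without affecting the hypotheses or the conclusion. For $S_2$, apply Proposition \ref{prop:sk}(4) inductively to the partial unions $C_1 \cup \cdots \cup C_i$: each normal $C_i$ is $S_2$, and each $C_i \cap C_{J(i)}$ is reduced and equidimensional of dimension $d-1$, hence $S_1$. So $F_\omicron$ is $S_2$, and Proposition \ref{prop:degens}(2) transfers $S_2$ to $F_\eta$.

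For $R_1$, I argue by contradiction: suppose $(F_\eta)_{\mathrm{sing}}$ has an irreducible codimension-$1$ component $\Sigma_\eta$, and let $\Sigma \subseteq F$ be its closure. Since $R$ is regular and $F \to S$ is flat, $\Sigma \cap F_\omicron \subseteq (F_\omicron)_{\mathrm{sing}}$, and upper-semicontinuity of fiber dimension on the proper surjection $\Sigma \to S$ gives $\dim(\Sigma \cap F_\omicron) \geq d - 1$. The $(d-1)$-dimensional irreducible components of $(F_\omicron)_{\mathrm{sing}}$ are exactly the irreducible $C_i \cap C_{J(i)}$, since $(C_i)_{\mathrm{sing}}$ (codimension $\geq 2$ in the normal $C_i$) and the triple intersections $C_i \cap C_j \cap C_k$ have dimension $\leq d-2$ by assumption. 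Thus some nonempty $I \subseteq \{2,\ldots,n\}$ has $C_i \cap C_{J(i)}$ as a $(d-1)$-dimensional component of $\Sigma \cap F_\omicron$ for each $i \in I$.

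Now consider the normalization $\nu \colon \tilde F \to F$. Since $F$ is integral ($F_\eta$ irreducible and fibers reduced force $F$ to be integral), $\tilde F$ is integral and normal, and as a domain over the DVR $R$ it is $R$-flat; so $\tilde F \to S$ is a flat projective family whose generic fiber $\tilde F_\eta$ is the normalization of the irreducible $F_\eta$, hence irreducible. For each $i \in I$, the \'etale-local model of $F$ at a generic point of $C_i \cap C_{J(i)}$ is $\{xy = u(t)\} \times \AA^{d-1} \times \Spec R$, with $x,y$ local equations of $C_i, C_{J(i)}$; the versal-deformation dichotomy for the node $\{xy = 0\}$ forces $u \equiv 0$, since any nonzero $u \in \mathfrak{m}_R$ would smooth the generic fiber locally and contradict $i \in I$. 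So $\nu$ locally separates the branches $\{x=0\}, \{y=0\}$, and since the unique top-dimensional component of $\tilde F_\omicron$ over each normal $C_i$ is an isomorphic copy $\tilde C_i$ (finite birational onto normal is an isomorphism), the copies $\tilde C_i$ and $\tilde C_{J(i)}$ fail to meet in codimension $1$ precisely when $i \in I$.

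The dual graph of the top-dimensional components of $(\tilde F_\omicron)_{\mathrm{red}}$ is therefore the original tree with the $|I|$ edges indexed by $I$ removed, and is disconnected since trees have no redundant edges. But Proposition \ref{prop:degens}(4), applied to the flat $\tilde F \to S$ with irreducible generic fiber, forces $(\tilde F_\omicron)_{\mathrm{red}}$ to be connected in codimension $1$: contradiction. Hence $I = \emptyset$, contradicting $\dim(\Sigma \cap F_\omicron) \geq d - 1$, and $F_\eta$ is $R_1$; Serre's criterion then yields normality. The main obstacle will be the \'etale-local analysis above, where one must extract the dichotomy between $u \equiv 0$ and $u \not\equiv 0$ and verify that the resulting branch-separation of $\nu$ is reflected as the described separation of top-dimensional components on the special fiber $\tilde F_\omicron$, rather than being absorbed by the global irreducibility of $\tilde F$.
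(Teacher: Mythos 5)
Your route to $R_1$ is genuinely different from the paper's. The paper slices $F$ with a general plane down to a family of at-worst-nodal curves and then argues via lemma \ref{lem:nodes}: blowing up the closures of the nodes of the generic fiber and using the irreducibility of $C_i\cap C_{J(i)}$ (by varying the plane) to show that \emph{every} node on that edge extends to a section, so that the tree-shaped dual graph would be disconnected. You instead pass to the full normalization $\nu\colon\tilde F\to F$ of the total space, separate the two branches formally at the generic point of each $C_i\cap C_{J(i)}$ hit in codimension one by the closure of $(F_\eta)_{sing}$, and contradict connectedness in codimension $1$ of $(\tilde F_\omicron)_{red}$ via proposition \ref{prop:degens}(4); the $S_2$ half is identical to the paper's. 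This trades the Bertini/slicing bookkeeping (and lemma \ref{lem:bertini}) for a local analysis at non-closed points, and I believe it can be made to work, but as written it has two gaps.

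First, the step you flag yourself: the formal local model $\{xy=u\}$ at the generic point $\xi$ of $C_i\cap C_{J(i)}$, with the dichotomy ``$u\equiv 0$, so the branches separate in $\tilde F$'' versus ``$u\neq 0$, so no singular point of $F_\eta$ specializes to $\xi$''. This is the same versal-deformation-of-a-node fact the paper invokes in lemma \ref{lem:nodes}, but the paper first arranges, by slicing to curves and base-changing so that the node locus becomes an unramified section, to be in the textbook setting where the node has the same residue field as the base; you need the statement at a point whose residue field has transcendence degree $d-1$ over it (and the ``$\times\AA^{d-1}$'' factor is not literally meaningful in $\mathcal{O}_{F,\xi}$), so the Cohen-structure/coefficient-field argument must be supplied, or the problem localized and sliced as in the paper. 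Note also that the irreducibility of $C_i\cap C_{J(i)}$ is still doing work for you here: it is what lets branch separation over the single point $\xi$ kill the entire $(d-1)$-dimensional edge $\tilde C_i\cap\tilde C_{J(i)}$.

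Second, ``finite birational onto normal is an isomorphism'' does not justify the claim it decorates, namely that over each $C_i$ there is a \emph{unique} top-dimensional component of $\tilde F_\omicron$, mapping with degree $1$; it only upgrades birational to isomorphism once uniqueness and degree $1$ are known. Nor can you import this from theorem \ref{thm:AK}: the family there is the normalization of $F$ in $F^\times$, and since all fibers here are reduced, lemma \ref{lem:crucial} says that family is $F$ itself, whereas your $\tilde F$ is the full normalization, a different object. The claim is true and easy to repair: $F$ is integral and $F_\omicron$ is reduced, so $\mathcal{O}_{F,\zeta_i}$ is a DVR at the generic point $\zeta_i$ of $C_i$; hence $\nu^{-1}(\zeta_i)$ is a single point at which $\tilde F_\omicron$ is reduced, i.e.\ a generic point of $\tilde F_\omicron$ with residue field $k(C_i)$, lying on a unique component. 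With these two points repaired the rest goes through: every component of $\tilde F_\omicron$ has dimension $d$ (it is a component of the Cartier divisor $\pi=0$ in the integral $\tilde F$), no new edges can appear since $\tilde C_j\cap\tilde C_k\subseteq\nu^{-1}(C_j\cap C_k)$, and removing a nonempty set of edges from a tree disconnects $(\tilde F_\omicron)_{red}$ in codimension $1$, contradicting proposition \ref{prop:degens}(4) for the flat projective family $\tilde F\to S$.
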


\begin{proof}
  \junk{
    For purposes of discussion more than anything else, we replace $S$
    by the spectrum of a discrete valuation ring, the germ of a smooth
    curve through the point under the special fiber considered.
    This doesn't affect the special, generic, or geometric generic fibers.
  }
  Parallelling the case of curves, we define a graph $\Gamma$ whose
  vertices are $\{1,\ldots,n\}$ and with an edge between $i$ and $j$
  iff $\dim (C_i \cap C_j) = \dim F_\omicron - 1$. Then since the
  geometric generic fiber is irreducible, by
  proposition \ref{prop:degens} the special fiber is connected in
  codimension $1$, making this graph $\Gamma$ connected.
  The conditions on the intersections, that when listing the vertices
  in order each attaches to a unique previous one,
  imply that $\Gamma$ is a tree.
  (The converse is not quite true -- 
  imagine $\PP^2 \cup_{\Pone} \PP^2 \cup_{\Pone} \PP^2$ where the first
  and third component meet in two points.)

  To use Serre's criterion, we must show the generic fiber $F_\eta$
  is $S_2$ and $R_1$.
  We prove $C_1 \cup \ldots \cup C_i$ is $S_2$ by induction on $i$
  (the $i=1$ case being trivial):
  $$ C_1 \cup \ldots \cup C_i 
  = (C_1 \cup \ldots \cup C_{i-1}) \cup_{C_i \cap C_{J(i)}} C_i $$
  where the right-hand-side is the union of two $S_2$ schemes along 
  a reduced scheme. Then proposition \ref{prop:sk} says that this scheme
  is $S_2$. For $i=n$, we learn that the special fiber is $S_2$,
  so by proposition \ref{prop:degens} the geometric generic fiber is $S_2$.

  {\bf Slicing down to a family of curves.}
  Pick a plane $P$ in general position with respect to the special fiber
  and the generic fiber (by extending the base field if necessary),
  of complementary dimension to $F_\omicron$ plus one.
  To show that the generic fiber is $R_1$, we want to show that its
  intersection with $P$ is $R_1$, i.e. that it is a normal curve.
  The precise generality conditions we want are that
  \begin{itemize}
  \item each $P \cap C_i$ is a normal curve,
  \item each $P \cap C_i \cap C_{J(i)}$ is a set of reduced points, 
  \item each $P \cap C_i \cap C_j \cap C_k$, 
    for $i,j,k$ distinct, is empty, and
  \item $P$'s intersection with the geometric generic fiber is a 
    reduced and irreducible curve.
  \end{itemize}
  Let $F'$ be the family of curves given by intersecting every
  fiber with $P$, and $C'_i = P \cap C_i$ the components of $F'_\omicron$.
  It is worth noting that $F'$ does {\em not} satisfy one of the conditions
  we required of $F$, namely that $C_i \cap C_{J(i)}$ is irreducible.
  Rather, the corresponding intersection $C'_i \cap C'_{J(i)}$ is a finite
  set of points.

  The special fiber $F'_\omicron$ is a union of normal curves $\{ C'_i \}$.
  Each $C'_i \cap C'_j = P \cap C_i \cap C_j$, $j<i$,
  is only nonempty if $j = J(i)$, in which case it is reduced:
  $F'_\omicron$ has only ordinary double points.
  The graph $\Gamma(F'_\omicron)$ is almost the same as
  the graph $\Gamma$ constructed above; the only difference is that
  two connected vertices will, as explained in the last paragraph,
  usually have {\em many} edges between them. 
  
  \junk{
  Since $F'_\omicron$ is a curve with at worst nodal singularities, 
  the same is true of $F'_\eta$. We now claim that the map
  $F'_{sing} \to S$ is finite, i.e. it doesn't contain 
  any components of $F'_\omicron$ or $F'_\eta$, because they
  are generically reduced. 
    If it did, then the surface
    $F'$ would be singular in codimension $1$ along $F'_\omicron$, 
    and we could resolve these singularities by normalizing $F'$
    in the open set $F'_\eta$. This is exactly the construction
    in theorem \ref{thm:AK} used to replace a nonreduced fiber
    by a branchvariety; but by lemma \ref{lem:crucial} and the
    assumption that $F'_\omicron$ is generically reduced,
    this normalization doesn't affect $F'_\omicron$ generically.
    Unwinding, we see that $F'$ is generically regular along $F'_\omicron$,
    so doesn't contain any of its components.
  }

  {\bf Using nodes in $F'_\eta$ to disconnect $F'_\omicron$.}
  If $F'_\eta$ is regular, we are done. 
  Otherwise we may pick a singular point (a node) in $F'_\eta$, 
  and take its closure $N$ in $F'$. 
  Then $N_\omicron$ is a node in $F'_0$, so a point in
  $C'_i \cap C'_{J(i)}$ for some unique $i$. This $N$ and $i$
  are fixed hereafter.

  We now claim that for {\em every} point (node) $p \in C'_i \cap C'_{J(i)}$, 
  there exists a section $N^p \subseteq F'_{sing}$ of $F'\to S$ with
  $N^p_\omicron$ the desired node. This follows from the assumed
  irreducibility of $C_i \cap C_{J(i)}$, as follows.
  By varying the general plane $P$
  we can vary the intersection $C'_i \cap C'_{J(i)}$, and the nodes
  for which there do, resp. don't, exist such sections sweeps out 
  an open set in $C_i \cap C_{J(i)}$. By the irreducibility,
  one of these two open sets is empty; since we used $N$ to choose $i$
  we know it is the set of nodes for which there don't exist such sections.

  This says that the map $\Sigma$ from lemma \ref{lem:nodes} surjects
  onto the edges connecting $C'_i,C'_{J(i)}$ in the graph of $F'_\omicron$.
  Removing those edges disconnects the graph, counter to the
  result of lemma \ref{lem:nodes}. This contradiction traces back to
  our assuming that $F'_\eta$ was not regular.
\end{proof}

Our last technical lemma contains a couple of simple observations about the 
families in theorem \ref{thm:GVDs} concerning geometric vertex decompositions.

\begin{Lemma}\label{lem:gvdslice}
  For any $Y \subseteq H \times L\Pone$ as in theorem \ref{thm:GVDs},
  let $F(Y)$ denote the flat family constructed there.
  Let $\barX \subseteq H \times L\Pone$ be the closure 
  of $X \subseteq H\times L$. Then
  \begin{enumerate}
  \item $X$ is nonempty iff $F(\barX)_\omicron$ is nonempty.
  \item If $\barX$ is an irreducible curve satisfying the conditions
    of theorem \ref{thm:GVDs}, and its projection $\Pi \subseteq H$ is normal,
    then $\barX$ is itself normal.
  \end{enumerate}
\end{Lemma}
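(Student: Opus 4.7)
My plan is to treat the two parts separately, each via a short reduction.

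For part (1), I would use the dichotomy $X = \emptyset \Leftrightarrow \barX = \emptyset$ (definition of closure). If $\barX$ is empty then so is the family $F$, and $F(\barX)_\omicron$ is empty. Conversely, if $\barX$ is nonempty, then the generic fiber of $F \to \AA^1$ is a scalar translate of $\barX$ and hence nonempty, so Proposition \ref{prop:degens}(1) (applied with the normal, irreducible base $\AA^1$) forces $F(\barX)_\omicron$ to be nonempty. Equivalently one could read $\Pi\times\{0\}\subseteq F_\omicron$ off of Theorem \ref{thm:GVDs}, since $\Pi$ is nonempty whenever $\barX$ is.

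For part (2), the whole strategy is to show that the projection $\pi: \barX \to \Pi$ is finite and birational and then invoke the standard fact that a finite birational morphism onto a normal integral scheme is an isomorphism. Properness of $L\Pone$ over the ground field makes $H \times L\Pone \to H$ proper, and restricting to the closed subscheme $\barX$ gives a proper surjection $\pi: \barX \onto \Pi$. Since $\barX$ is an irreducible curve, $\Pi$ has dimension $0$ or $1$; in the $0$-dimensional case $\barX$ must equal a fiber $\{h_0\}\times L\Pone \cong \Pone$, which is smooth; in the $1$-dimensional case $\pi$ is a proper map with zero-dimensional fibers, hence finite. Birationality of $\pi$ is the generic $1$:$1$ hypothesis included among the conditions of Theorem \ref{thm:GVDs}.

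To finish, I would appeal to the local-rings argument: for $q \in \barX$ mapping to $p \in \Pi$, the inclusion $\calO_{\Pi,p} \hookrightarrow \calO_{\barX,q}$ exhibits $\calO_{\barX,q}$ as integral over $\calO_{\Pi,p}$, both sitting inside the common fraction field $K(\barX) = K(\Pi)$; integral closedness of $\calO_{\Pi,p}$ (normality of $\Pi$) traps $\calO_{\barX,q}$ back inside $\calO_{\Pi,p}$, forcing equality. Hence $\pi$ is an isomorphism and $\barX \cong \Pi$ is normal. The only mild obstacle is verifying finiteness and birationality of $\pi$; the concluding local-rings step is essentially just the definition of a normal integral domain.
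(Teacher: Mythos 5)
Your part (1) is fine once you discard the appeal to Proposition \ref{prop:degens}(1): that proposition concerns families of \emph{projective} schemes, whereas the fibers of $F(\barX)\to\AA^1$ are only closed in $H\times L\Pone$ and the map is not proper, so nonemptiness of the generic fiber does not by itself force the special fiber to be nonempty (this failure is precisely why the lemma is needed; compare the paper's remark, in the Cohen--Macaulayness step of the Geometric Vertex Decomposition Lemma, that proposition \ref{prop:degens} cannot be invoked in the nonprojective setting). Your fallback --- reading $\Pi\times\{0\}\subseteq F(\barX)_\omicron$ off theorem \ref{thm:GVDs}, with $\Pi$ nonempty whenever $X$ is --- is exactly the paper's argument, so part (1) stands.

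For part (2) you take a genuinely different, and more elementary, route. You note that $\pi:\barX\to\Pi$ is proper with finite fibers (a one-dimensional fiber would force $\barX=\{h\}\times L\Pone$, the trivial case), hence finite, and birational by the generically $1$:$1$ hypothesis; a finite birational map onto the normal $\Pi$ is an isomorphism, so $\barX\iso\Pi$ is normal --- in fact a stronger conclusion than claimed. The paper instead argues through the degeneration, in parallel with lemma \ref{lem:nodes}: since $\Pi$ is normal the special fiber $(\Pi\times\{0\})\cup(\Lambda\times L\Pone)$ is nodal, a singular point of $\barX$ would sweep out a subfamily of $F(\barX)_{sing}$ proper over $\AA^1$ limiting to a node $\Lambda\times\{0\}$, and blowing up that subfamily (after compactifying) would disconnect the special fiber while the generic fiber stays irreducible, contradicting connectedness. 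Your route buys simplicity and avoids the blow-up/connectedness machinery; the paper's stays within the degeneration framework of its other lemmas and never mentions the projection. Two caveats. First, you must read ``the conditions of theorem \ref{thm:GVDs}'' as including the generically $1$:$1$ hypothesis; that is the correct reading --- it is what is available when the lemma is applied in lemma \ref{lem:gvdnormal}, the statement is false without it (the affine nodal cubic $\{y^2=x^2(x+1)\}\subseteq\AA^1\times L$ is already closed in $\AA^1\times L\Pone$ and has normal projection $\AA^1$, yet is not normal), and the paper's own sketch uses it implicitly to identify the singularities of the special fiber with $\Lambda\times\{0\}$ --- but it deserves to be said explicitly. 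Second, in your final step the integrality should be applied to the semilocal ring $(\pi_*\calO_{\barX})_p$ rather than to a single local ring $\calO_{\barX,q}$, since a localization of a finite algebra need not be integral over $\calO_{\Pi,p}$; the standard fact you quote is of course correct, so this is only a matter of phrasing.
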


\begin{proof}
  \begin{enumerate}
  \item If $X$ is nonempty, then $\barX$ and its projection $\Pi$ are nonempty,
    and $F(\barX)_\omicron \supseteq \Pi \times \{0\}$ so it too is
    nonempty. The converse is obvious.
    \junk{
    \item The fiber over $1$ of the two families is the same, so the
      families agree away from the zero fiber. Both are closed in
      $P \times L\Pone \times \AA^1$, but the first is defined as a closure, so
      $$ F\left(\barX \cap (P\times L\Pone)\right) 
      \subseteq F(\barX) \cap (P \times L\Pone \times S).$$
    }
  \item
    The argument from lemma \ref{lem:nodes} must be modified slightly,
    because the map $F(\barX) \to \AA^1$ is not proper, and if one
    simply compactifies one may add singularities that are worse than nodes.
    The key observations are that 
    \begin{itemize}
    \item Any singularity $N \in \barX_{sing}$ gives a subfamily
      $F(N) \subseteq F(\barX)_{sing}$ that is proper over $\AA^1$.
      Since $\Pi$ is normal, 
      $F(\barX)_\omicron = (\Pi\times \{0\}) \cup (\Lambda \times L\Pone)$
      is nodal, so $\barX$ is at worst nodal.
    \item The new points in the compactification attach to only one component 
      of $F(\barX)_\omicron$, namely $\Pi\times \{0\}$, 
      so aren't relevant in studying connectedness.
    \end{itemize}
    If $N \in \barX_{sing}$, then
    $F(N)_\omicron$ is necessarily one of the nodes $\Lambda \times \{0\}$
    of $(\Pi \times \{0\}) \cup (\Lambda \times L\Pone)$, 
    so as before the formal neighborhood of $F(N)$ inside $F(\barX)$ is a
    trivial deformation of a node. Now compactify, blow up $F(\barX)$
    along $F(N)$, and as before get an impossible family of projective
    curves whose generic fiber is irreducible but whose special fiber
    is disconnected.
  \end{enumerate}
\end{proof}

\subsection{Proofs of the main lemmas}

\begin{proof}[Proof of the Reduced Limit Lemma]
  Via base change, we can reduce to the case that $S$ is the germ of a
  regular $1$-dimensional scheme, e.g. the $\Spec$ of a discrete
  valuation ring $D$. Then $S$ has one closed point and one open point.
  
  By theorem \ref{thm:AK}, the family $F$ is dominated by a family
  $\wtF$ of branchvarieties, agreeing over $S\setminus \omicron$.
  Note that $\wtF_\omicron$, $F_\omicron$ are each equidimensional of
  dimension $d$ and connected in codimension $1$, by proposition
  \ref{prop:degens}.  The branchvariety $\wtF_\omicron \to F_\omicron$
  appears a priori to depend on the curve chosen in the first step, but
  this will not affect the argument (which will in any case
  establish that $\wtF_\omicron = F_\omicron$).

  {\bf The components of $\wtF_\omicron$.}
  Let $\beta_\omicron: \wtF_\omicron \to F_\omicron$ be the induced map on 
  special fibers, 
  with image $(F_\omicron)_{red} = A_1 \union \ldots \union A_k$. 
  We will now show that $\wtF_\omicron$ has exactly the same
  components, though a priori they may be glued together differently.
  
  By the latter conclusion of theorem \ref{thm:AK}, the components
  of $\wtF_\omicron$ can be labeled
  \junk{
    Let $A$ be a component of $\wtF_\omicron$, necessarily of
    dimension $d$.  Since $\beta$ is finite, $\dim \beta(A) = d$.  
    Since $(F_\omicron)_{red}$ is equidimensional of dimension $d$, $\beta(A)$
    is one of its components $A_i$. So far we learn that $\beta$ induces
    a map from components$(\wtF_\omicron) \to$ components$(F_\omicron)$;
    like $\beta$, it is onto.
    
    Now we make use of the assumption that $F_\omicron$ is generically reduced,
    to infer that $\beta$ only takes {\em one} component $A$ of
    $\wtF_\omicron$ to any component $A_i$ of $F_\omicron$, 
    and that the degree of that map $A \to A_i$ is $1$. So the components of 
    $\wtF_\omicron$ can be called 
  }
  $\widetilde A_1,\ldots, \widetilde A_k$, 
  with $\beta_\omicron(\widetilde A_i) = A_i$,
  and each map $\widetilde A_i \to A_i$ is degree $1$ and finite.
  Now we make use of the assumption that the $\{A_i\}$ are normal,
  which lets us infer that each 
  map $\widetilde A_i \to A_i$ 
  is an isomorphism.  So $(F_\omicron)_{red},\wtF_\omicron$
  have the same components, as claimed. Hereafter we identify the
  components of $\wtF_\omicron$ with the $\{A_i\}$.

  {\bf Showing $F_0$ is reduced.}
  We now split into the three cases of the lemma: 
  $k=1$, $k=2$, and general $k$.
  In each case, rather than dealing with $F_\omicron$ directly,
  the idea is to show that the map 
  $\beta_\omicron: \wtF_\omicron \to (F_\omicron)_{red}$ is an isomorphism. 
  Then lemma \ref{lem:crucial} lets us infer indirectly that
  $F_\omicron$ is reduced.

  {\em If $k=1$.} 
  Then $\wtF_\omicron = A_1 = (F_\omicron)_{red}$. 
  By lemma \ref{lem:crucial}, $F_\omicron$ is reduced, and obviously
  normal (since $A_1$ was assumed so).
  
  {\em If $k=2$, and $A_1\cap A_2 \subseteq (F_\omicron)_{red}$ 
    is reduced and irreducible.}
  Consider the diagrams $A_1,A_2 \to (F_\omicron)_{red}$ and
  $A_1,A_2 \to \wtF_\omicron$, and denote their pullbacks
  (which are just the intersections) by $C,C'$. Then by lemma \ref{lem:gluing},
  there is an inclusion $C' \into C$, and our goal is to show their equality.
  Since $\wtF_\omicron$ is connected in
  codimension $1$, $\dim C' = \dim F_\omicron - 1 = \dim C$. 

  Now we use the assumption that $C$ is reduced and irreducible to infer 
  $C' = C$. Hence by lemma \ref{lem:gluing}
  the map $\wtF_\omicron \to (F_\omicron)_{red}$ is
  an isomorphism, so lemma \ref{lem:crucial} tells us $F_\omicron$ is reduced.

  {\em General $k$, $F_\omicron$ reduced through codimension $1$, 
    and the shelling condition.}
  Let $\wtF_\omicron^i, (F_\omicron)_{red}^i$ denote the unions of the
  images of $A_1,\ldots,A_i$ in $\wtF_\omicron, (F_\omicron)_{red}$. 
  So we have a map $\wtF_\omicron^i \onto (F_\omicron)_{red}^i$ 
  for each $i$.
  Assume that $\wtF_\omicron^j \onto (F_\omicron)_{red}^j$
  is an isomorphism for all $j<i$; we will use this to prove it for $j=i$.
  The base case $i=1$ is trivial, as the map is $A_1 \to A_1$.

  This proof is very similar to the one just given, except that
  we work with the pullback diagrams of 
  $\wtF_\omicron^{i-1}, A_i \to (F_\omicron)_{red}^i$ and
  $\wtF_\omicron^{i-1}, A_i \to \wtF_\omicron^i$.
  Again call the pullbacks $C,C'$, and again we have $C' \subseteq C$.
  By assumption $C$ is reduced, 
  and equidimensional of dimension $\dim F_\omicron - 1$.
  So either $C' = C$, or $C'$ does not contain some component $D$ of $C$.
  
  If not, then the map $\wtF_\omicron^i \onto (F_\omicron)_{red}^i$ is
  generically $2$:$1$ over $D$ --- once from $\wtF_\omicron^{i-1}$, 
  once from $A_i$. But then by lemma \ref{lem:crucial},
  $F_\omicron$ would not be generically reduced along $D$.
  
  Hence $C' = C$, so $\wtF_\omicron^i \onto (F_\omicron)_{red}^i$ is
  an isomorphism by lemma \ref{lem:gluing}. When $i=k$, we learn that
  $\wtF_\omicron \onto (F_\omicron)_{red}$ is an isomorphism, so 
  $F_\omicron$ is reduced by lemma \ref{lem:crucial}.

  {\bf Showing the generic fiber is $S_2$.}
  In case (1) we saw that the special fiber is normal, hence $S_2$, 
  so the generic fiber is $S_2$ by proposition \ref{prop:sk}. 
  We now treat cases (2) and (3) together.

  First we show the (reduced!) special fiber $F_\omicron$ is $S_2$.
  This is by induction on $k$, using
  \begin{equation}\label{eqn:Fsi}
    F_\omicron^i = F_\omicron^{i-1} \union_{F_\omicron^{i-1} \cap A_i} A_i 
  \qquad \hbox{for $i>0$, and } F_\omicron^0 := \emptyset
  \end{equation}
  where $F_\omicron^i = \wtF_\omicron^i = (F_\omicron)_{red}^i$. 
  By assumption, ${F_\omicron^{i-1} \cap A_i}$ is reduced so $S_1$, $A_i$ is 
  normal so $S_2$, and $F_\omicron^{i-1}$ is $S_2$ by induction on $i$.
  Their union $F_\omicron^i$ is then $S_2$ by proposition \ref{prop:sk}.
  At $i=k$ we find out $F_\omicron$ is $S_2$.

  Hence by proposition \ref{prop:degens}, the generic fiber is also $S_2$.
  
  {\bf (In cases (1) and (2)) The generic fiber is normal.}
  In case (1), if the abnormal locus in the generic fiber is nonempty,
  its closure will give a subfamily whose special fiber will be nonempty
  and lie in the abnormal locus of $F_\omicron$, contradiction.
  (We could also just invoke the local result of \cite{Hironaka}.)

  Case (2) is exactly the situation of lemma \ref{lem:genfibnormal}
  with either order on the two components,
  and there are no triple intersections to consider.
\end{proof}

In cases (1) and (2) we proved the generic fiber to be normal.
This conclusion need not hold in case (3):
consider a nodal plane cubic degenerating to a union of a 
line and a conic.

Case (2) can be considerably generalized along the lines of the
intersection conditions in lemma \ref{lem:genfibnormal}.
The proof is not any more difficult, but we 
omitted it as we know no natural examples not already covered by case (2).

\begin{proof}[Proof of lemma \ref{lem:skgeneric}]
  Use equation (\ref{eqn:Fsi}) above, proposition \ref{prop:sk}, and induction,
  exactly as was done in the proof above to prove $F_\omicron$ was $S_2$.
\end{proof}

\begin{proof}[Proof of the Geometric Vertex Decomposition Lemma]
  The proof is very close to that of the Reduced Limit Lemma,
  and we use the same notation $S, F_\omicron, \wtF_\omicron$,
  $\beta_\omicron : \wtF_\omicron \to F_\omicron$.

  By theorem \ref{thm:GVDs}, we have the containment of schemes
  $$ F_\omicron \supseteq 
  (\Pi\times \{0\}) \cup_{\Lambda\times \{0\}} (\Lambda\times L\Pone) $$
  and the difference is supported on $\Pi \times \{0\}$, 
  in codimension $\geq 1$.
  (Note that $\Pi$ is automatically irreducible, being the image of $\barX$.)
  In particular $F_\omicron$ is generically reduced.

  {\bf The component $A$.}
  Hence by theorem \ref{thm:AK} there is a component $A$
  of $\wtF_\omicron$ mapping to $\Pi \times \{0\}$, 
  and the map is degree $1$. Having assumed $\Pi$ to be normal,
  we infer that this finite map $A\to \Pi$ is an isomorphism.

  {\bf The union of components $B$.}
  Consider now the preimage 
  $\beta_\omicron^{-1}(F_\omicron \setminus (\Pi \times \{0\})) 
  \subseteq \wtF_\omicron$.
  Since $F_\omicron$ is reduced on 
  $F_\omicron \setminus (\Pi \times \{0\}) 
  = \Lambda \times (L\Pone \setminus \{0\})$
  by the assumption that $\Lambda$ is reduced, using lemma \ref{lem:crucial}
  we can see that the map
  $$ \beta_\omicron: 
  \beta_\omicron^{-1}(F_\omicron \setminus (\Pi \times \{0\})) 
  \to F_\omicron \setminus (\Pi \times \{0\}) $$
  is an isomorphism. 
  Let $B\subseteq \wtF_0$ denote the closure of this preimage.

  Since a component of $\wtF_\omicron$ maps either to 
  $\Pi \times \{0\}$ or to the closure of its complement, we see that 
  $\wtF_\omicron = A \cup B$. The main difference between 
  this situation and case (2) of the Reduced Limit Lemma is that $B$
  is usually not irreducible.

  To continue following the argument in the Reduced Limit Lemma,
  we will need to determine $B$, and show that $A,B$ are glued together
  the same way in $\wtF_\omicron$ as in $(F_\omicron)_{red}$.

  The image of $B$ is the closure of $F_\omicron \setminus (\Pi \times \{0\})$,
  namely $\Lambda \times L\Pone$. 
  So the map $B \to \Lambda \times L\Pone$
  is finite, degree $1$, and an isomorphism away from $\Lambda\times \{0\}$.
  This forces it to be an isomorphism everywhere. 
  (This uses the normality of $L\Pone$ rather than any condition on $\Lambda$.)
  If we use this to identify $B$ with $\Lambda\times L\Pone$, we can decompose
  $$ \wtF_\omicron 
  = (\Pi \times \{0\}) \cup_{C' \times \{0\}} (\Lambda \times L\Pone)
  $$
  where $C'$ is defined by the intersection.

  In $(F_\omicron)_{red}$, namely 
  $(\Pi\times \{0\}) \cup_{\Lambda\times \{0\}} (\Lambda\times L\Pone)$,
  the intersection $(\Pi\times \{0\}) \cap (\Lambda\times L\Pone)$
  is $\Lambda\times \{0\}$ as a scheme. In particular
  this intersection is equidimensional (being Cartier in $\barX$)
  and reduced (by assumption). 
  By lemma \ref{lem:gluing}, $C' \subseteq \Lambda$.
  It remains to show that $C'$ contains general points from each component 
  of $\Lambda$, and thereby learn $C'\times\{0\} = \Lambda\times\{0\}$.
  For this we can safely extend the base field (from the point $\omicron\in S$)
  to its algebraic closure.

  {\bf Slicing down to the $1$-dimensional case.}
  Let $P \subseteq H$ be a plane in general position
  (in particular, not necessarily through $\vec 0$) with respect to
  $\Pi$ and $\Lambda$, whose intersection with $\Lambda$ is $0$-dimensional.
  Then
  \begin{eqnarray*}
  (F_\omicron)_{red} \cap (P\times L\Pone) 
  &=& ((\Pi\cap P) \times \{0\})  
  \cup_{(\Lambda \cap P)\times \{0\}}  ((\Lambda\cap P) \times L\Pone) \\
  \beta_\omicron^{-1}(P\times L\Pone) 
  &=& ((\Pi\cap P) \times \{0\})  
  \cup_{(C' \cap P)\times \{0\}} ((\Lambda\cap P) \times L\Pone)
  \end{eqnarray*}
  Now we make our only use of lemma \ref{lem:bertini}, to say that 
  $\beta_\omicron^{-1}(P\times L\Pone)$ is reduced and that
  $\beta^{-1}(P\times L\Pone \times S)$ is a flat family of branchvarieties.
  Since its generic fiber is irreducible (by Bertini's theorem), 
  proposition \ref{prop:degens} says that 
  $\beta_\omicron^{-1}(P\times L\Pone)$ is connected.

  In $\beta_\omicron^{-1}(P\times L\Pone) 
  = ((\Pi\cap P) \times \{0\})  
  \cup_{(C' \cap P)\times \{0\}} ((\Lambda\cap P) \times L\Pone)$,
  the first term $(\Pi\cap P) \times \{0\}$ is a normal affine curve,
  and $(\Lambda\cap P) \times L\Pone$ is a disjoint union of $\Pone$s.
  For each point $\lambda$ in the finite set $\Lambda\cap P$,
  the only possible point of intersection of $\lambda \times L\Pone$ 
  and any other component of $\wtF_\omicron$ is
  $\lambda \times \{0\}$. 
  So for $\beta_\omicron^{-1}(P\times L\Pone)$ to be connected,
  we must have $\lambda \in C'$. 

  This demonstrates that $C'$ contains general enough points 
  of $\Lambda$ to contain all of $\Lambda$'s top-dimensional components, 
  which with $\Lambda$ reduced says that $C' = \Lambda$.

  Finally, we invoke lemma \ref{lem:gluing} to infer that
  the map $\wtF_\omicron \onto (F_\omicron)_{red}$ is an
  isomorphism, then lemma \ref{lem:crucial} to infer that $F_\omicron$
  is reduced. 

  {\bf Cohen-Macaulayness.}
  We now assume $\Pi$ and $\Lambda$ are Cohen-Macaulay. 
  Then so is $\Lambda \times L\Pone$, so
  $$ (\Pi\times \{0\}) \cup_{\Lambda\times \{0\}} (\Lambda\times L\Pone) $$
  is a union of two Cohen-Macaulay schemes along a third of codimension $1$.
  Hence $F_\omicron$ is Cohen-Macaulay by proposition \ref{prop:sk}.

  We would like to claim that
  $\barX$ is Cohen-Macaulay by proposition \ref{prop:degens}, but that
  proposition assumes projectivity. (Essentially, the problem is that in
  the nonprojective situtation, one can have nonempty families with
  empty special fibers, and we need a different way to forbid this.)
  So instead we consider the 
  non-C-M locus $B_1 \subseteq F_1 = \barX$, and complete it to a 
  subfamily $B \subseteq F$ using the same recipe. Since $B_0$ lies inside
  the (empty) non-C-M locus of $F_0$, it too is empty.
  By lemma \ref{lem:gvdslice}, 
  since $B_0 = \emptyset$, then $B_1 = \emptyset$ also.
  
  Finally, $X$ is Cohen-Macaulay since it is open in $\barX$.
\end{proof}

\begin{proof}[Proof of lemma \ref{lem:gvdnormal}]
  Let $C_1 \subseteq \barX_{sing}$ be a component of the 
  singular locus, and of codimension $1$ in $\barX$.
  Let $C \subseteq F$ be the subfamily constructed by the same recipe as $F$. 
  Then $C_t \subseteq (F_t)_{sing}$ for all $t \in \AA^1$.

  {\bf Showing $C_1 \not\subseteq \Lambda \times \{\infty\}$.}
  Assume for contradiction that $C_1 \subseteq \Lambda \times \{\infty\}$.
  Then $C$ is a constant family, so 
  $C_0 \subseteq (\Lambda \times \{\infty\}) \cap (F_0)_{sing}$.
  By theorem \ref{thm:GVDs}
  $$ F_0 \setminus (H\times \{0\}) 
  = \Lambda \times (L\Pone \setminus \{0\}) $$
  so
  $$ (F_0 \setminus (H\times \{0\}))_{sing} 
  = (\Lambda \times (L\Pone \setminus \{0\}))_{sing}
  = \Lambda_{sing} \times (L\Pone \setminus \{0\}). $$
  Hence 
  $C_0 \subseteq \Lambda_{sing} \times \{\infty\}$.

  Since $\Lambda$ is generically reduced
  (indeed, it was assumed reduced), its singular locus $\Lambda_{sing}$ 
  contains no top-dimensional components of $\Lambda$. 
  But then its dimension is too small to contain $C_0$, contradiction.
  
  {\bf Showing $C_1 = D\times L\Pone$.}
  \junk{
    Having shown that $C_1 \not\subseteq \Lambda \times \{\infty\}$,
    we may apply theorem \ref{thm:GVDs} to $C$, with
    $$ C_\omicron = 
    (\Pi_C\times \{0\}) \cup_{\Lambda_C\times \{0\}} (\Lambda_C\times L\Pone)
    \qquad \hbox{as sets} $$
    where $\Pi_C$ is the image of the projection of $C_1$ and
    $\Lambda_C \times \{\infty\} = C_1 \cap (\Lambda \times \{\infty\})$.
  }
  Extend the base field as usual, 
  so we may slice with a general plane $P\subseteq H$ such that 
  \begin{itemize}
  \item $P\cap \Pi$ is a normal curve, and
  \item $P\cap \barX$ is irreducible 
    and projects generically $1:1$ to $P\cap \Pi$, and
  \item $P\cap \Lambda$ is reduced and $0$-dimensional.
  \end{itemize}
  Then
  \begin{eqnarray*}
    F_\omicron \cap (P\times L\Pone) 
    &=& \left(
      (\Pi\times \{0\}) 
      \cup_{\Lambda\times \{0\}} (\Lambda\times L\Pone)\right) 
    \cap (P\times L\Pone) \\ 
    & = & \left( (\Pi\cap P) \times \{0\}\right) 
    \cup_{(\Lambda\cap P)\times \{0\}} 
    \left((\Lambda \cap P)\times L\Pone\right)
  \end{eqnarray*}
  is nodal, so
  we can apply lemma \ref{lem:gvdslice} part (2) to infer 
  that $\barX \cap (P\times L\Pone)$ is a normal curve.

  At this point we assume, for intended contradiction, that $C$ is
  {\em not} of the form $D\times L\Pone$. 
  Therefore its projection $\Pi_C \subseteq H$ is of the same dimension as $C$,
  so $\Pi_C \cap P$ is a nonempty set of points.
  Consequently, $C \cap (P \times L\Pone)$ is a nonempty set of
  singularities of the curve $\barX \cap (P \times L\Pone)$,
  contradiction.

  {\bf Studying $D$.}
  What can we say about the factor $D$ in $C_1 = D\times L\Pone$? Since
  $$ \Lambda \times \{\infty\} 
  = \barX \cap (H\times \{\infty\})
  \supseteq C_1 \cap (H\times \{\infty\})
  = (D\times L\Pone) \cap (H\times \{\infty\})
  = D\times \{\infty\},$$ 
  we see $\Lambda \supseteq D$.
  By dimension count, $D$ is codimension $1$ in $\Lambda$,
  and $C_1 \subseteq \barX_{sing}$ implies $D \subseteq \Lambda_{sing}$,
  whose codimension in $\Lambda$ is at least $1$ (since $\Lambda$ was 
  assumed reduced).
  Hence $D$ is a top-dimensional (in particular, non-embedded) component
  of $\Lambda_{sing}$.

  If $\Lambda$ is normal, then it is $R_1$ so there can be no such $D$
  (since $\Lambda_{sing}$ is of too low dimension)
  thus no such $C_1$, hence $\barX$ also is $R_1$. 
  Also, by the same Mayer-Vietoris argument as in
  the proof of Cohen-Macaulayness in the previous lemma, $\Lambda$ and $\Pi$ 
  being $S_2$ (since they are normal) implies that $\barX$ is $S_2$.
  Together, we see that $\Lambda$ normal implies $\barX$ is normal.
\end{proof}

We now give an example showing the criterion in 
lemma \ref{lem:gvdnormal} is not automatic. Let
\begin{eqnarray*}
  \Pi = H &=& \Spec \complexes[x,y] \\
  L\Pone &=& \Proj \complexes[a^{(1)},b^{(1)}] \qquad
  \text{(superscripts indicating degrees)} \\
  \barX 
  &=& \Proj \complexes[x^{(0)},y^{(0)},a^{(1)},b^{(1)}]
  \big/ \<a x^2 - (a+b) y^2\>
\end{eqnarray*}
so 
$$ \Lambda = \left\{ (x,y,[a,b]) \in \barX : b=0 \right\} 
  \iso \left\{(x,y) : x = \pm y\right\} $$
Then all the other conditions hold:  $\barX$ is irreducible,
$\barX \to \Pi$ is generically $1$:$1$, $\Pi$ is normal, $\Lambda$ is $S_2$,
but $\barX$ is not $R_1$. 
According to lemma \ref{lem:gvdnormal}, 
we may blame this on the $\Pone$ of singularities along $\{x=y=0\}$,
which happens to be the support of the whole singular locus of $\barX$.

\section{An application to Schubert varieties}\label{sec:Schubert}

In this section we use the Geometric Vertex Decomposition Lemma 
and lemma \ref{lem:gvdnormal} to study the 
singularities of Schubert varieties in generalized flag manifolds $G/B$.

Most of the results here are standard, or at least well-known to
the experts, with the exception of lemma \ref{lem:conecomponent}
and of course the new proof of theorem \ref{thm:schubertCM}.
Since our goal is exactly to provide a new proof, we felt it was
worth making the argument largely self-contained, the better to 
demonstrate that we haven't hidden an old proof somewhere.
The exceptions to self-containment are some structure theory
of reductive groups and the BGG/Demazure iterative construction
of Schubert varieties.

\subsection{Copies of $H\times L\Pone$ inside $G/B$}

Fix a pinning $(G,T,W,B,B_-)$ of a reductive Lie group, 
where $B$ and $B_-$ are opposed Borel subgroups with intersection $T$.
Let $N_-$ denote the unipotent radical of $B_-$, so $B_- = T N_-$.
Associated to a simple root $\alpha$
we have the simple reflection $r_\alpha \in W := N(T)/T$ and the
minimal parabolic subgroup $P_\alpha = \< B, r_\alpha B r_\alpha \>$.
Let $P_{-\alpha} = \< B_-, r_\alpha B_- r_\alpha \>$
denote the corresponding extension of $B_-$.

For example, $G$ might be the group $GL_n$ 
of invertible matrices,
$B$ the upper triangular matrices, 
$P_\alpha$ the matrices whose lower triangle vanishes except at
the matrix entry $(j+1,j)$,
$B_-$ the lower triangulars, and $T$ the diagonals. 
Our interest is in the {\dfn generalized flag manifold} $G/B$,
which is isomorphic in the case $G=GL_n$ 
to the space of full flags in the vector space $\AA^n$. 
In general, since our interest is in the action on $G/B$, there is no harm in
replacing $G$ by its adjoint group $G/Z(G)$.
Let $\pi_\alpha: G/B \onto G/P_\alpha$ denote the canonical submersion,
a bundle map with fibers $P_\alpha/B \iso \Pone$.
\junk{(All our schemes and flat families will be defined over $\integers$,
  but it is convenient to work over a field, in order 
  to define Hilbert functions of our coordinate rings.)}

It turns out that $G/B$ is an especially natural venue in which to
apply the Geometric Vertex Decomposition Lemma: it contains many open
subvarieties of the form $H\times L\Pone$. To locate them we will need
a couple of preparatory statements.

\begin{Lemma}\label{lem:Nmult}
  \begin{enumerate}
  \item   Let $X \subseteq N_-$ be closed, $T$-invariant, and nonempty.
    Then $X \ni 1$. 
  \item 
    Let $v \in W$, 
    and $N_1 = N_- \cap v N_- v^{-1}, N_2 = N_- \cap v N v^{-1}$.
    Then the multiplication maps $N_1 \times N_2 \to N_-$,
    $N_2 \times N_1 \to N_-$ are isomorphisms of schemes. 
  \end{enumerate}
\end{Lemma}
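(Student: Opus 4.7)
For part (1), my plan is to use a regular cocharacter of $T$ to contract $N_-$ to the identity. Pick a one-parameter subgroup $\lambda : \Gm \to T$ with $\langle \alpha,\lambda\rangle > 0$ for every simple root $\alpha$. Via the $T$-equivariant isomorphism $\mathfrak{n}_- \iso N_-$ given by choosing root coordinates, conjugation by $\lambda(t)$ scales the $\mathfrak{g}_\alpha$-coordinate of $n\in N_-$ by $t^{\langle \alpha,\lambda\rangle}$; since every $\alpha\in\Phi^-$ has $\langle \alpha,\lambda\rangle<0$, all these coordinates tend to $0$ as $t\to\infty$, so $\lim_{t\to\infty}\lambda(t)\, n\,\lambda(t)^{-1} = 1$. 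For any chosen $x\in X$, the $T$-invariant closed subscheme $X$ contains the whole orbit $\{\lambda(t)\,x\,\lambda(t)^{-1} : t\in \Gm\}$ and hence its limit, so $1\in X$.

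For part (2), the plan is to reduce the statement to the standard fact that an appropriately ordered product of root subgroups realizes $N_-$ scheme-theoretically. I would first identify the Lie algebras using $\mathrm{Ad}(v)\mathfrak{g}_\alpha = \mathfrak{g}_{v\alpha}$, obtaining
\[
\mathrm{Lie}(N_1) = \bigoplus_{\alpha\in\Phi^-,\ v^{-1}\alpha\in\Phi^-}\mathfrak{g}_\alpha, \qquad \mathrm{Lie}(N_2) = \bigoplus_{\alpha\in\Phi^-,\ v^{-1}\alpha\in\Phi^+}\mathfrak{g}_\alpha,
\]
so that $\mathfrak{n}_- = \mathrm{Lie}(N_1)\oplus \mathrm{Lie}(N_2)$ as $T$-modules. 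Writing $\Phi(N_i)$ for the set of roots of $\mathrm{Lie}(N_i)$, each $\Phi(N_i)$ is closed under addition within $\Phi^-$ (the sign of the $v^{-1}$-image of a sum of two negative roots is the common sign of its summands), so $N_1$ and $N_2$ are genuine closed unipotent subgroup schemes, with dimensions summing to $\dim N_-$.

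Now I would invoke the standard structure theorem: for any enumeration $\beta_1,\ldots,\beta_m$ of $\Phi^-$ in which every initial segment is closed under root addition within $\Phi^-$, the ordered product map $U_{\beta_1}\times\cdots\times U_{\beta_m}\to N_-$ is an isomorphism of schemes. The plan is to choose such an enumeration which first lists $\Phi(N_1)$ (e.g.\ in order of increasing height within it) and then lists $\Phi(N_2)$ likewise; by associativity the product map then factors through $N_1\times N_2 \to N_-$, exhibiting this as the claimed isomorphism. Reversing the two blocks gives the analogous isomorphism $N_2\times N_1 \to N_-$. The main technical point to verify is the existence of such a partition-respecting convex enumeration, which reduces to the observation that each $\Phi(N_i)$ is already closed under addition within $\Phi^-$, so one may order each block independently.
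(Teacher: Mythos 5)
Your part (1) is essentially the paper's argument: a regular cocharacter of $T$ contracts $N_-$ to the identity under conjugation, and closedness plus $T$-invariance of $X$ then forces $1\in X$; the only difference is a sign/limit convention. For part (2) you take a genuinely different route. The paper does not invoke the structure theorem on products of root subgroups; it bootstraps from part (1): the multiplication map $N_1\times N_2\to N_-$ is $T$-equivariant for the conjugation actions, so its non-injectivity, non-surjectivity and ramification loci are $T$-invariant and closed; since the differential at the identity is the isomorphism $\lie{n}_1\oplus\lie{n}_2\to\lie{n}_-$, the map is an isomorphism near $1$, so by part (1) those loci are empty, and a bijective unramified map onto the normal variety $N_-$ is an isomorphism. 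You instead quote the standard fact that a $T$-stable unipotent subgroup is directly spanned by its root subgroups, partition $\Phi^-$ into $\Phi(N_1)\sqcup\Phi(N_2)$, and group the ordered product into two blocks. This is correct, with two small remarks: the ``convex initial segment'' condition you single out as the technical point is not needed in the standard formulation (the product map is an isomorphism of varieties for \emph{any} enumeration of the roots, e.g.\ Borel 14.4, Springer 8.2.1, SGA3 Exp.\ XXII), and if you do insist on it, the verification is not purely blockwise --- you must also account for sums of a root in $\Phi(N_1)$ with one in $\Phi(N_2)$, which your increasing-height ordering within each block does handle, since such a sum has strictly smaller height than either summand. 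The trade-off between the two proofs: yours is shorter but imports more structure theory, whereas the paper's stays nearly self-contained (in keeping with its stated aim), deriving the global statement from the infinitesimal one via part (1), which is also why part (1) is isolated as a separate claim.
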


\begin{proof}
  If $\sigma: \Gm \to T$ is a regular dominant
  coweight, then 
  $$ \forall n\in N_-,\quad \lim_{t\to 0}\ {\rm ad\ }\sigma(t) \cdot n = 1.$$
  So $\exists n\in X$ plus $X$ invariant under $T$ (hence under $\sigma$) 
  implies $1\in X$.

  We consider the first map in the second claim 
  (the argument is the same for the second map).
  This map is $T$-equivariant with respect to the
  conjugation action of $T$ on $N_-,N_1,N_2$.
  Hence the semialgebraic sets
  $$ \{ (n_1,n_2) \in N_1\times N_2 : \exists (n_1',n_2') \neq (n_1,n_2),
  n_1' n_2' = n_1 n_2 \} $$
  $$ \{n \in N_- : \not\!\exists (n_1,n_2)\in N_1\times N_2, n_1 n_2 = n\} $$
  $$ \text{the ramification locus in $N_-$} $$
  are each $T$-invariant, and we can apply the first claim to their closures.
  
  The derivative at the identity of this map is the 
  isomorphism $\lie{n}_1 \oplus \lie{n}_2 \to \lie{n_-}$.
  By the inverse function theorem, 
  the multiplication map is a diffeomorphism near the identity of $N_1,N_2$.
  So these sets cannot have the identity in their closure, and hence
  must be empty. Therefore this map is a bijective, unramified map 
  between normal varieties, hence an isomorphism.  
\end{proof}

Since the map $\pi_\alpha: G/B \onto G/P_\alpha$ is a fiber bundle,
it can be trivialized over some atlas of the target, and the
following proposition specifies one of local trivializations.
We will need the fact that $N_-$ acts on $G/B$ with a free open dense
orbit, called the {\dfn big cell}.

\newcommand\barF{\overline F}
\begin{Proposition}\label{prop:param}
  Let $Rad(P_{-\alpha}) := N_- \cap r_\alpha N_- r_\alpha$, 
  the unipotent radical of $P_{-\alpha}$. 
  Pick a group isomorphism 
  $F_{-\alpha} : \Ga \to N_{-\alpha} := N_- \cap r_\alpha N r_\alpha$.
  Then the map 
  $$ \AA^1 \to G/B, \quad z \mapsto F_{-\alpha}(z) B / B $$
  extends continuously to an embedding $\barF_{-\alpha}: \Pone \to G/B$, 
  and the $T$-equivariant map  
  \begin{eqnarray*}
    \gamma: Rad(P_{-\alpha}) \times \Pone &\to& G/B \\
    (n, z) &\mapsto& n F_{-\alpha}(z) B/B
  \end{eqnarray*}
  is an open immersion, with image $\pi_\alpha^{-1}(N_- P_\alpha/P_\alpha)$.
  The diagram of $\Pone$-bundles 
$$
  \begin{array}{ccccccc}
    Rad(P_{-\alpha}) \times \Pone 
    & \stackrel{ \gamma}\longrightarrow & \ \ \pi_\alpha^{-1}(N_- P_\alpha/P_\alpha) &\subseteq& G/B \\
    \downarrow & & \ \ \ \ \ \ \ \downarrow \pi_\alpha  &&\downarrow \pi_\alpha\\
    Rad(P_{-\alpha}) &  \stackrel{\cdot P_\alpha/P_\alpha}\longrightarrow & \ \ N_- P_\alpha/P_\alpha &\subseteq& G/P_\alpha
  \end{array}
$$
  commutes, and the horizontal arrows are isomorphisms,
  making $\gamma$ a trivialization of the $\Pone$-bundle $\pi_\alpha$
  restricted to the open set $N_- P_\alpha/P_\alpha \subseteq G/P_\alpha$.

  Let $X_{r_\alpha}^\circ := N_- r_\alpha B/B \subseteq G/B$. 
  Then the locally closed subsets
  $$ \gamma\left(Rad(P_{-\alpha}) \times \{\infty\}\right) 
  = X_{r_\alpha}^\circ, \qquad
  \gamma\left(Rad(P_{-\alpha}) \times \{0\}\right) 
  = r_\alpha X_{r_\alpha}^\circ $$
  are sections over $N_- P_\alpha/P_\alpha$ of this $\Pone$-bundle.
\end{Proposition}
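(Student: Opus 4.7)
The plan rests on two structural facts: the fiber $P_\alpha/B \iso \Pone$ is a homogeneous $P_\alpha$-space with $T$-fixed points $B/B$ and $r_\alpha B/B$; and Lemma \ref{lem:Nmult}(2) applied with $v = r_\alpha$ yields a multiplication isomorphism $Rad(P_{-\alpha}) \times N_{-\alpha} \iso N_-$.

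First I would extend $F_{-\alpha}$. The composition $z \mapsto F_{-\alpha}(z) B/B$ is the orbit map of the free $N_{-\alpha}$-action on $P_\alpha/B$ (free since $N_- \cap B = \{1\}$), whose image is $P_\alpha/B \setminus \{r_\alpha B/B\}$. Thus $F_{-\alpha}$ extends uniquely to an isomorphism $\overline F_{-\alpha}: \Pone \iso P_\alpha/B$ sending $\infty$ to $r_\alpha B/B$. The bottom horizontal arrow of the square is a bijection $Rad(P_{-\alpha}) \to N_- P_\alpha/P_\alpha$, using the factorization $N_- = Rad(P_{-\alpha}) \cdot N_{-\alpha}$ with $N_{-\alpha} \subseteq P_\alpha$ and $Rad(P_{-\alpha}) \cap N_{-\alpha} = \{1\}$, and the square commutes because $F_{-\alpha}(z) \in P_\alpha$.

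Next I would show $\gamma$ is an open immersion onto $\pi_\alpha^{-1}(N_- P_\alpha/P_\alpha)$. On the open $Rad(P_{-\alpha}) \times \AA^1$, the map $\gamma$ factors through the multiplication isomorphism $Rad(P_{-\alpha}) \times N_{-\alpha} \iso N_-$ of Lemma \ref{lem:Nmult}(2) followed by the free orbit map $N_- \iso N_- B/B$ onto the big cell, hence is an isomorphism there. For the section over $\{\infty\}$, the $Rad(P_{-\alpha})$-stabilizer of $r_\alpha B/B$ lies in $r_\alpha N_- r_\alpha^{-1} \cap r_\alpha B r_\alpha^{-1} = \{1\}$, so $\gamma(\cdot, \infty)$ is injective; its image $Rad(P_{-\alpha}) r_\alpha B/B$ equals $N_- r_\alpha B/B = X_{r_\alpha}^\circ$ because $N_{-\alpha} r_\alpha B/B = r_\alpha N_\alpha B/B = r_\alpha B/B$. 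The same type of calculation, with $r_\alpha N_- r_\alpha = Rad(P_{-\alpha}) \cdot N_\alpha$, identifies $\gamma(Rad(P_{-\alpha}) \times \{0\}) = Rad(P_{-\alpha}) B/B$ with $r_\alpha X_{r_\alpha}^\circ$.

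The main obstacle I anticipate is upgrading these bijective identifications to a scheme-theoretic open immersion, uniformly across $z = 0$ and $z = \infty$. I would handle this using the $T$-equivariance and inverse function theorem template already employed in Lemma \ref{lem:Nmult}(2): the differential of $\gamma$ at $(1, 0)$ realizes the tangent-space decomposition $\lie n_- \iso \lie{rad}(\lie p_{-\alpha}) \oplus \lie n_{-\alpha}$, so $\gamma$ is \'etale there; a regular dominant $\Gm \subseteq T$ then contracts all of $Rad(P_{-\alpha}) \times \AA^1$ toward $(1, 0)$ equivariantly, propagating \'etaleness to this entire chart, and the same argument on the opposite chart $\Pone \setminus \{0\}$ (via the $r_\alpha$-twisted parameterization) covers $z = \infty$. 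A bijective \'etale morphism to a smooth variety is an open immersion, which together with the image calculation completes the proof, the section identifications being just the specializations $z = 0$ and $z = \infty$ already computed.
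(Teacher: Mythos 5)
Your proposal is correct, and its skeleton is the same as the paper's: the multiplication isomorphism $Rad(P_{-\alpha})\times N_{-\alpha}\iso N_-$ of lemma \ref{lem:Nmult}(2) handles the affine chart, the extension $\barF_{-\alpha}$ is identified with the fiber $P_\alpha/B$ by a degree-$1$/properness argument, and the section images are computed by exactly the same manipulations $N_- r_\alpha B/B = Rad(P_{-\alpha})\,r_\alpha B/B$ and $r_\alpha N_- r_\alpha = Rad(P_{-\alpha})N_\alpha$. Where you genuinely diverge is the upgrade from a bijection to an isomorphism onto $\pi_\alpha^{-1}(N_- P_\alpha/P_\alpha)$: the paper observes that $\gamma$ is proper over $N_- P_\alpha/P_\alpha$ (since $Rad(P_{-\alpha})\times\Pone$ is proper over $Rad(P_{-\alpha})$), so a bijective, degree-$1$, proper map onto its normal open image is an isomorphism; you instead prove $\gamma$ is \'etale by a differential computation at the $T$-fixed center of each chart plus a $\Gm$-contraction --- the very mechanism inside the proof of lemma \ref{lem:Nmult}(2) --- and then use \'etale plus injective $\Rightarrow$ open immersion. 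Both work; the paper's route avoids any computation at $z=\infty$, while yours avoids the properness/normality step and stays closer to the $T$-invariance arguments used throughout the section. Two points to tighten: on the chart at infinity the coordinate $w=1/z$ has $T$-weight $+\alpha$, so the \emph{same} regular dominant coweight does not contract that chart toward its center --- you really do need the $r_\alpha$-twist you allude to (e.g.\ the coweight $r_\alpha\cdot\check\rho$, or conjugate the chart by $r_\alpha$ and quote lemma \ref{lem:Nmult} again); and ``bijective \'etale to a smooth target is an open immersion'' needs universal injectivity, which here is free because $\gamma$ is birational onto its normal open image (and global injectivity uses, implicitly in your write-up, that the big cell and $X_{r_\alpha}^\circ$ are disjoint $N_-$-orbits).
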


\begin{proof}
  The restriction of $\gamma$ to the open set
  $$ Rad(P_{-\alpha}) \times \AA^1 \to G/B $$
  factors through $N_-$ by lemma \ref{lem:Nmult}, 
  hence that restriction is an open immersion onto the big cell.
  We must show that the extension exists and is finite (e.g. injective), with 
  image a normal variety, to conclude that it is an isomorphism onto the image.
  Being an extension of an injective immersion, it is automatically
  degree $1$.

  The extension of 
  $$ \AA^1 \to G/B, \quad z \mapsto F_{-\alpha}(z) B / B $$
  to $\Pone$ exists because $G/B$ is proper.
  Since the group $F_{-\alpha}$ is contained in $P_\alpha$, 
  the image of $\barF_{-\alpha}$ is contained in the fiber $P_\alpha/B$
  of the map $\pi_\alpha$, and as they are both $1$-dimensional, closed,
  and reduced the image must equal that fiber. 
  Also, this degree $1$ proper map to a normal target must be an isomorphism.
  We mention that the previously missed point in $P_\alpha/B$ is $r_\alpha B$.

  By lemma \ref{lem:Nmult} applied to $v=r_\alpha$, 
  the intersection $Rad(P_{-\alpha}) \cap F_{-\alpha}$ is trivial.
  Hence the orbit through the basepoint of $G/P_\alpha$ is free,
  and by dimension count, open dense. 
  Call this orbit the {\dfn big cell on $G/P_\alpha$}. 
  
  We now claim that $\gamma$ is an
  isomorphism of $Rad(P_{-\alpha}) \times \Pone$ and 
  $\pi_\alpha^{-1}(N_- P_\alpha/P_\alpha)$.
  Since $\pi_\alpha$ is $G$-equivariant,
  each element $n \in Rad(P_{-\alpha}) \leq G$ permutes the fibers.
  Because $Rad(P_\alpha)$ acts freely on the big cell on $G/P_\alpha$, 
  it doesn't preserve any fiber, which shows that $\gamma$ is injective.

  The image of $\gamma$ is obviously a union of fibers, 
  and composing with $\pi_\alpha$
  the map becomes $(n,z) \mapsto n P_\alpha B/B$, 
  whose image doesn't change if we replace $n \in Rad(P_{-\alpha})$ 
  by $n \in Rad(P_{-\alpha}) F_{-\alpha} = N_-$. Hence the image
  of $\gamma$ is $\pi_\alpha^{-1}(N_- P_\alpha/P_\alpha)$ as claimed.

  Since $\pi_\alpha \circ \gamma$ is proper, so is $\gamma$, 
  hence it is a proper bijective degree $1$ map to its normal image, 
  and thus an isomorphism.

  Obviously $Rad(P_{-\alpha}) \times \{z\}$ is a section of the
  left-hand bundle for $z = 0,\infty$ or indeed any $z \in \Pone$.
  We compute the images, using
  $N_\alpha = N \cap r_\alpha N_- r_\alpha$:
  \begin{eqnarray*}
    \gamma\left(Rad(P_{-\alpha}) \times \{\infty\}\right) 
    &=& Rad(P_{-\alpha}) r_\alpha B/B
    = Rad(P_{-\alpha}) r_\alpha N_\alpha B/B
    = Rad(P_{-\alpha}) N_{-\alpha} r_\alpha B/B \\
    &=& N_- r_\alpha B/B
    = X_{r_\alpha}^\circ \\
    \gamma\left(Rad(P_{-\alpha}) \times \{0\}\right) 
    &=& Rad(P_{-\alpha}) B/B
    = r_\alpha Rad(P_{-\alpha}) r_\alpha B/B
    = r_\alpha X_{r_\alpha}^\circ. 
  \end{eqnarray*}
\end{proof}

The image of $\barF_{-\alpha}$ is a $T$-invariant $\Pone$ inside $G/B$,
whose $T$-fixed points are $\{B, r_\alpha B\}$. One consequence of 
proposition \ref{prop:param} is that this $\Pone$ has trivial normal bundle
(and enjoys a tubular neighborhood theorem, a rarity in algebraic geometry).
This triviality does {\em not} hold on partial flag manifolds $G/P$; 
for example the normal bundle to a $T$-invariant $\Pone \subset \PP^2$
is $\mathcal O(1)$, not trivial. This is the uncommon situation
in which $G/B$ is simpler than $G/P$.

\subsection{Schubert varieties and patches}

The {\dfn Chevalley-Bruhat decomposition} of the 
{\dfn generalized flag manifold} $G/B$ is by orbits of $N_-$,
which are indexed by the Weyl group $W$:
$$ G/B = \coprod_{w\in W} X_w^\circ, 
\qquad\qquad
X_w^\circ  := N_- w B / B. $$
(Technically, $w \in W = N_G(T)/T$ should be lifted to an element
$\widetilde w \in N_G(T)$, but $\widetilde w B$ doesn't depend on
this choice, so we don't clutter the notation with it.)
A {\dfn Schubert variety} $X_w$ is 
the closure $\overline{X_w^\circ} \subseteq G/B$ 
of a {\dfn Schubert cell} $X_w^\circ$.
The big cell is $X_1^\circ$.
The {\dfn Bruhat order} on $W$ has $v\geq w$ if $vB \in X_w$. 
Then each Schubert variety has a Bruhat
decomposition $X_w = \coprod_{v\geq w} X_v^\circ$.

\junk{
  Since $G/B$ is irreducible and $N_-$ is connected, 
  there must be exactly one open $N_-$-orbit
  in this finite decomposition, and it is $N_- B/B$.
  This is a copy of affine space, called the {\dfn big cell},
  though some authors prefer to call the open $B$-orbit 
  $B w_0 B/B = w_0 N_- B$ the big cell,
  where $w_0\in W$ is the {\dfn long element}.
}

We summarize what we need of the Bernstein-Gel$'$fand-Gel$'$fand/Demazure 
iterative construction of Schubert varieties.

\begin{Proposition}\label{prop:BGG}
  Let $\pi_\alpha$ denote the canonical submersion $G/B \onto G/P_\alpha$.
  Then $\pi_\alpha^{-1}(\pi_\alpha(X_v))$, being manifestly irreducible,
  closed, and $B_-$-invariant must again be a Schubert variety. 

  There are two cases: if
  $v r_{\alpha} > v$, then $\pi^{-1}(\pi(X_v)) = X_v$ again,
  whereas if
  $v r_{\alpha} < v$, then $\pi^{-1}(\pi(X_v)) = X_{v r_{\alpha}}$.
  In the latter case, the map $\pi_\alpha: X_v \onto \pi_\alpha(X_v)$
  is generically $1$:$1$.
\end{Proposition}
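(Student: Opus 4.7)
The plan is to use $G$-equivariance of $\pi_\alpha$ together with dimension counts in Bruhat order to identify $\pi_\alpha^{-1}(\pi_\alpha(X_v))$ as a specific Schubert variety, with the generic $1{:}1$ claim then following from a stabilizer comparison. The main obstacle is bookkeeping: in the convention used here, smaller Weyl elements give larger Schubert varieties ($v \leq w$ iff $X_v \supseteq X_w$), with $\dim X_w = \dim G/B - \ell(w)$, and one must keep track of which of $v,vr_\alpha$ is the minimal-length coset representative in $W^{P_\alpha}$ for $vW_{P_\alpha}$.

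First I would verify that $\pi_\alpha(X_v) = \overline{N_- v P_\alpha/P_\alpha}$: since $\pi_\alpha$ is proper, this image is closed, and being also irreducible and $B_-$-invariant it is a Schubert variety in $G/P_\alpha$. Then $\pi_\alpha^{-1}(\pi_\alpha(X_v))$ is the total space of the $\Pone$-bundle $\pi_\alpha$ restricted to that Schubert variety, hence closed, irreducible (using proposition \ref{prop:param} for local triviality over the open cell), and $B_-$-invariant; so it is itself a Schubert variety in $G/B$.

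To pin down which one, I would use the standard fact that if $u$ denotes the minimal-length coset representative of $vW_{P_\alpha} = \{v, vr_\alpha\}$, then $\dim \pi_\alpha(X_v) = \dim G/P_\alpha - \ell(u) = \dim G/B - 1 - \ell(u)$. If $vr_\alpha > v$ then $u=v$, so $\dim \pi_\alpha^{-1}(\pi_\alpha(X_v)) = \dim X_v$; since this preimage is irreducible and contains $X_v$, the two coincide. If instead $vr_\alpha < v$ then $u=vr_\alpha$ and the preimage has dimension $\dim X_v + 1 = \dim X_{vr_\alpha}$; since $vr_\alpha \leq v$ in Bruhat order gives $X_{vr_\alpha} \supseteq X_v$, and $\pi_\alpha(X_{vr_\alpha}) = \pi_\alpha(X_v)$ because they share a minimal coset representative, the preimage contains $X_{vr_\alpha}$ and therefore equals it.

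For the generic $1{:}1$ claim (in the case $vr_\alpha < v$), the surjection $\pi_\alpha : X_v \onto \pi_\alpha(X_v)$ is between irreducible varieties of the same dimension, hence generically finite. Restricting to the open cell $X_v^\circ = N_- vB/B$ and its image $N_- vP_\alpha/P_\alpha$, the map becomes a map of $N_-$-orbits, determined by the inclusion of stabilizers $N_- \cap vBv^{-1} \hookrightarrow N_- \cap vP_\alpha v^{-1}$. On Lie algebras the larger is obtained from the smaller by possibly adjoining the root space $\lie{g}_{-v\alpha}$; but $vr_\alpha < v$ forces $v\alpha < 0$, hence $-v\alpha > 0$ and $\lie{g}_{-v\alpha} \not\subseteq \lie{n}_-$. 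The two stabilizers therefore coincide, so the orbit map, and hence $\pi_\alpha|_{X_v^\circ}$, is bijective, giving generic $1{:}1$.
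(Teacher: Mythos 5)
The paper never proves this proposition: it is stated as a summary of the BGG/Demazure theory, and the introduction to section \ref{sec:Schubert} explicitly lists ``the BGG/Demazure iterative construction of Schubert varieties'' as one of the two admitted exceptions to self-containment. So there is no in-paper argument to compare against; what you have supplied is the standard proof, and its skeleton is correct. Your identification of $\pi_\alpha^{-1}(\pi_\alpha(X_v))$ goes through exactly as written: the image is closed by properness, the preimage is irreducible because $\pi_\alpha$ is a Zariski-locally trivial $\Pone$-bundle (proposition \ref{prop:param} plus $G$-translation), the dimension count via the minimal coset representative of $vW_{P_\alpha}$ matches the paper's conventions ($\dim X_w = \dim G/B - \ell(w)$, smaller $w$ gives bigger $X_w$), and ``same dimension, irreducible, one contains the other'' closes both cases, using $\pi_\alpha(X_{vr_\alpha})=\pi_\alpha(X_v)$ in the second.

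The one step you should shore up is the final ``the two stabilizers therefore coincide.'' Equality of the Lie algebras $\lie{n}_-\cap \mathrm{Ad}(v)\lie{b}$ and $\lie{n}_-\cap \mathrm{Ad}(v)\lie{p}_\alpha$ gives equality of the stabilizer subgroups only after a connectedness/smoothness remark, and in positive characteristic (which the paper is careful not to exclude) a Lie-algebra identity by itself is too weak: a bijective orbit map could still a priori be inseparable, in which case ``generically $1$:$1$'' in the degree-one sense would fail. The cleanest patch, in the spirit of lemma \ref{lem:Nmult}, is to argue at the level of root subgroups: both $N_-\cap vBv^{-1}$ and $N_-\cap vP_\alpha v^{-1}$ are closed $T$-stable subgroups of $N_-$, hence directly spanned by the root subgroups $U_\gamma$ ($\gamma<0$) they contain; a $U_\gamma$ lying in $vP_\alpha v^{-1}$ but not in $vBv^{-1}$ would force $\gamma=-v\cdot\alpha>0$, impossible. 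Then the map on open cells is an $N_-$-equivariant isomorphism of homogeneous spaces, giving degree $1$. (Alternatively: the set-theoretic fiber over $vP_\alpha$ is $\{vB\}$ because $vr_\alpha B\notin X_v$ when $vr_\alpha<v$, and your Lie-algebra computation shows the differential at $vB$ is an isomorphism, so the map is injective and separable over the open cell.) With that addition your argument is complete.
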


Define a {\dfn Schubert patch} $X_w|_v$ as the intersection
$$ X_w|_v := X_w \ \cap\  (v N_- B/B). $$
Since $N_- B/B$ is a copy of affine space, the Schubert patch naturally sits 
inside it as an affine subvariety, and both carry an action of $T$. 
It will also be useful to have the notation
$$ X_w|_S := \Union_{s\in S} X_w|_s \qquad\text{for any subset }S\subseteq W.$$
Schubert patches have been studied before, most obviously in 
Kazhdan-Lusztig theory; we include some more relevant 
refences in section \ref{ssec:subword}.

\begin{Proposition}\label{prop:cover}
  \begin{enumerate}
  \item $X_v^\circ \subseteq vN_- B/B$ for each $v\in W$.
  \item $X_w|_v$ is nonempty iff $v\geq w$.
  \item The Schubert patches $\{X_w|_v\}_{v\geq w}$ on a Schubert
    variety $X_w$ are an open cover. In particular, $X_w$ is normal
    and Cohen-Macaulay iff each $X_w|_v$ is.
  \item The Schubert patch $X_v|_v$ is just the Schubert cell $X_v^\circ$.
  \item \label{prop:cover:missed}
    If $u\not\leq v$, then $X_w|_v \cap X_u = \emptyset$.
  \item \label{prop:cover:image}
    The image of the open immersion 
    $\gamma : Rad(P_{-\alpha}) \times \Pone \to G/B$
    from proposition \ref{prop:param} is 
    $X_1^\circ \coprod X_{r_\alpha}^\circ = X_1|_{1,r_\alpha}$.
  \end{enumerate}
\end{Proposition}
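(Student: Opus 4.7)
The plan is to prove (1) directly from lemma \ref{lem:Nmult}(2), to deduce (3) immediately from (1), to establish (5) via a $T$-fixed-point argument and derive (2) and (4) as consequences, and to obtain (6) by combining proposition \ref{prop:param} with (4). For (1), given $n \in N_-$ I factor $n = m_1 m_2$ via lemma \ref{lem:Nmult}(2) applied to $v$, with $m_1 \in N_- \cap vN_-v^{-1}$ and $m_2 \in N_- \cap vNv^{-1}$; since $v^{-1}m_2 v \in N \subseteq B$, we get $nvB/B = m_1 v B/B \in vN_-B/B$. Part (3) is then immediate: (1) gives $X_w = \bigcup_{v\geq w} X_v^\circ \subseteq \bigcup_{v\geq w} vN_-B/B$, each $X_w|_v$ is visibly open in $X_w$, and normality and Cohen-Macaulayness are local properties.

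For (5), observe that $X_u \cap vN_-B/B$ is a closed $T$-invariant subvariety of $vN_-B/B$. Left-translation by $v^{-1}$ identifies it with a closed $T$-invariant subvariety of the big cell $N_-B/B \cong N_-$ (where the $T$-action corresponds to conjugation on $N_-$, since $T \subseteq B$); if nonempty, lemma \ref{lem:Nmult}(1) forces $1 \in N_-$ to lie in it, so $vB \in X_u$, i.e.\ $u \leq v$. This is the contrapositive of (5). Part (2) follows: $v \geq w$ gives $vB \in X_w|_v$ directly, and the converse is (5) applied with $u = w$. For (4), an element $xB \in X_v \cap vN_-B/B$ belongs to some cell $X_u^\circ$ with $u \geq v$; then (5) forces $u \leq v$, hence $u = v$ and $xB \in X_v^\circ$, while the reverse inclusion $X_v^\circ \subseteq X_v|_v$ is immediate from (1).

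For (6), proposition \ref{prop:param} identifies the image of $\gamma$ as $\pi_\alpha^{-1}(N_-P_\alpha/P_\alpha)$ and records $\gamma(Rad(P_{-\alpha}) \times \{\infty\}) = X_{r_\alpha}^\circ$. Restricting $\gamma$ to $Rad(P_{-\alpha}) \times \AA^1$ and using $N_- = Rad(P_{-\alpha}) \cdot N_{-\alpha}$ (lemma \ref{lem:Nmult}(2) with $v = r_\alpha$) shows this restriction is an isomorphism onto $N_-B/B = X_1^\circ$, so the image of $\gamma$ is $X_1^\circ \sqcup X_{r_\alpha}^\circ$. For the equality $X_1|_{1, r_\alpha} = X_1^\circ \sqcup X_{r_\alpha}^\circ$, the $\supseteq$ direction is immediate from (4), since $X_1|_1 = X_1^\circ$ and $X_{r_\alpha}^\circ = X_{r_\alpha}|_{r_\alpha} \subseteq X_1|_{r_\alpha}$; for $\subseteq$ I check that both $N_-B/B$ and $r_\alpha N_-B/B$ sit in $\pi_\alpha^{-1}(N_-P_\alpha/P_\alpha)$, which for the second uses $r_\alpha N_- = Rad(P_{-\alpha}) \cdot r_\alpha N_{-\alpha}$ (since $r_\alpha \in P_{-\alpha}$ normalizes $Rad(P_{-\alpha})$) combined with $r_\alpha N_{-\alpha} \subseteq P_\alpha$. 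The only subtle step in the whole proof is the $T$-fixed-point argument of the second paragraph, which amounts to conjugating the dominant regular coweight of lemma \ref{lem:Nmult}(1) by $v$ before applying it.
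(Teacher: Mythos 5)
Your proposal is correct and follows essentially the same route as the paper: part (1) via the factorization from lemma \ref{lem:Nmult}(2), parts (2), (4), (5) via the closed-$T$-invariant-subset-of-the-big-cell argument with lemma \ref{lem:Nmult}(1), and (6) via proposition \ref{prop:param}. The only differences are organizational (you derive (2) and (4) from (5), where the paper runs the same basepoint argument separately for each) and in (6) you check $r_\alpha N_- B/B \subseteq \pi_\alpha^{-1}(N_- P_\alpha/P_\alpha)$ by the direct factorization $r_\alpha N_- = Rad(P_{-\alpha})\, r_\alpha N_{-\alpha}$ rather than the paper's $r_\alpha$-invariance of $N_- P_\alpha/P_\alpha$, which is an equivalent computation.
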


\begin{proof}
  \begin{enumerate}
  \item Fix $v$, and let $N_1,N_2$ be as in lemma \ref{lem:Nmult}.
    Then
    $$ N_- v B/B = N_1 N_2 v B/B = v (v^{-1} N_1 v) (v^{-1} N_2 v) B/B. $$
    Now since 
    $$ v^{-1} N_2 v = v^{-1} N_- v \cap N  \subseteq N \subseteq B, \qquad
    v^{-1} N_1 v  = v^{-1} N_- v \cap N_- \subseteq N_-, $$
    we find
    $$ N_- v B/B = v (v^{-1} N_1 v) B/B \subseteq v N_- B/B. $$
  \item The intersection $X_w \cap v N_-B/B$ is a closed $T$-invariant
    subset of $v N_-B/B$. By lemma \ref{lem:Nmult}, it is only nonempty
    if it contains the basepoint $v B$, i.e. if $vB \in X_w$, 
    or equivalently $v \geq w$.
  \item By part (1),
    $ \Union_{v\in W} v N_- B/B \supseteq \Union_{v\in W} N_- v/B = G/B$.
    Intersecting with $X_w$, we get $\{X_w|_v\}$ as an open cover on $X_w$.
    Since normality and Cohen-Macaulayness are local conditions, it is
    enough to check them on an open cover.
  \item 
    Use $X_w = \coprod_{v\geq w} X_v^\circ$ to write
    $$ X_w|_w = X_w \cap w N_- B/B 
    = (X_w^\circ \cap w N_-B/B) \union \coprod_{v>w}
    (X_v^\circ \cap w N_-B/B). $$
    Since $w N_- B/B \supseteq N_- w B/B = X_w^\circ$,
    the first term is the desired one, and it remains to show that 
    $X_v^\circ \cap w N_-B/B = \emptyset$ for $v>w$. 
    
    The set $w^{-1} X_v \cap N_- B/B$ is a $T$-invariant closed subset
    of the big cell, so by lemma \ref{lem:Nmult} 
    if it is nonempty it must contain the basepoint, i.e. $wB/B \in X_v$.
    But this is only true for $w\geq v$, contradicted by $v>w$. 
  \item Since $X_w|_v = X_w \cap X_1|_v$, it is enough to prove this 
    for $w=1$, using (2).
    \junk{
      Then $X_1|_v \cap X_u$ is a $T$-invariant closed subset
      of $X_1|_v$, and by lemma \ref{lem:Nmult} is only nonempty if
      it contains the basepoint $vB$, i.e. if $vB \in X_u$.
    }
  \item 
    We already computed the image to be 
    $\pi_\alpha^{-1}(N_- P_\alpha/P_\alpha)$ so it is plainly 
    $N_-$-invariant. Hence it is determined by which $wB$ it contains,
    $w\in W$, and in this case the only such $w$ are $\{1,r_\alpha\}$.
    So the $N_-$-orbit decomposition of the image is
    \begin{eqnarray*}
      X_1^\circ \coprod X_{r_\alpha}^\circ 
      &=& X_1|_1\coprod X_{r_\alpha}^\circ\\
      &\subseteq& X_1|_1 \union {r_\alpha} X_1|_1 = X_1|_{1,r_\alpha}
    \end{eqnarray*}
    which gives us one of the desired inclusions.

    For the opposite inclusion, we need to show 
    $ \pi_\alpha^{-1}(N_- P_\alpha/P_\alpha) \supseteq {r_\alpha} X_1|_1$.
    Note that
    $$ N_- P_\alpha = B_- P_\alpha = B_- (SL_2)_\alpha P_\alpha 
    = P_{-\alpha} P_\alpha = (SL_2)_\alpha B_- P_\alpha $$
    where $(SL_2)_\alpha \subseteq P_\alpha$ denotes the root $SL_2$ subgroup.
    Hence $N_- P_\alpha/P_\alpha$ is $r_\alpha$-invariant, and so is its 
    $\pi_\alpha$ preimage. 
    Therefore that preimage contains ${r_\alpha} X_1|_1$.
  \end{enumerate}
\end{proof}

We will prove that these affine patches are normal and Cohen-Macaulay
by induction on $v$ with respect to the Bruhat order.
Their study will require the following technical lemma, whose proof
was sketched for us by Shrawan Kumar.

\begin{Lemma}\label{lem:conecomponent}
  Let $r_\alpha$ be a simple reflection, and $w,v \in W$ such that
  $v r_\alpha < v$. 
  Then 
  $$ \left( X_w \cap v\cdot X_{r_\alpha}^\circ \right)
  \times \AA^1_{v\cdot\alpha} \, \iso\, X_w|_{v r_\alpha}. $$ 
\end{Lemma}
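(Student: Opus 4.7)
Since $vr_\alpha < v$, the root $v\alpha$ is negative, so the one-parameter root subgroup $N_{v\alpha} := vN_\alpha v^{-1}$ lies in $N_- \subseteq B_-$ and therefore preserves $X_w$ under left multiplication. The plan is to realize the ambient Schubert patch $X_1|_{vr_\alpha} = (vr_\alpha)N_-B/B$ as a trivial $N_{v\alpha}$-bundle whose ``zero section'' is $vX_{r_\alpha}^\circ$, then intersect with $X_w$ to obtain the stated splitting.

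Set $N_-^{\neg\alpha} := \prod_{\beta > 0,\,\beta \neq \alpha} N_{-\beta}$. A direct computation in the $SL_2$ attached to $\alpha$ shows that $N_{-\alpha}$ fixes $r_\alpha B$, so
$$X_{r_\alpha}^\circ = N_- r_\alpha B/B = N_-^{\neg\alpha} r_\alpha B/B.$$
Applying lemma \ref{lem:Nmult}(2) with its ``$v$'' equal to $r_\alpha$, multiplication gives $N_-^{\neg\alpha} \times N_{-\alpha} \iso N_-$; conjugating by $vr_\alpha$ then produces the multiplication isomorphism
$$(vN_-^{\neg\alpha}v^{-1}) \times N_{v\alpha} \,\iso\, (vr_\alpha)N_-(vr_\alpha)^{-1}.$$
Since $vr_\alpha B$ is a free orbit for the right-hand group, evaluation on this basepoint yields an isomorphism
$$\psi \colon (vN_-^{\neg\alpha}v^{-1}) \times N_{v\alpha} \,\iso\, X_1|_{vr_\alpha}, \qquad (m_1, m) \mapsto m\, m_1 \cdot vr_\alpha B.$$
The product order is chosen so that left multiplication on $G/B$ by $N_{v\alpha}$ corresponds to translation in the second factor; the zero-section $\{m = e\}$ maps onto $vN_-^{\neg\alpha} r_\alpha B/B = vX_{r_\alpha}^\circ$.

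Since $N_{v\alpha}$ preserves $X_w$, the preimage $\psi^{-1}(X_w)$ is translation-invariant in the second factor, so it has the form $S \times N_{v\alpha}$, where $S$ is its intersection with the zero-section. Under $\psi$, $S$ corresponds to $X_w \cap vX_{r_\alpha}^\circ$, yielding
$$X_w|_{vr_\alpha} = X_w \cap X_1|_{vr_\alpha} \,\iso\, (X_w \cap vX_{r_\alpha}^\circ) \times N_{v\alpha} \,\iso\, (X_w \cap vX_{r_\alpha}^\circ) \times \AA^1_{v\alpha},$$
where the last identification uses that $T$ acts on $N_{v\alpha}$ by conjugation with weight $v\alpha$.

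The main point requiring care is that $\psi$ is a scheme-theoretic isomorphism, not just a bijection. This reduces to the standard fact that the multiplication map associated to a factorization of the unipotent radical of a Borel subgroup is an isomorphism of varieties, which is exactly what lemma \ref{lem:Nmult}(2) provides after the conjugation by $vr_\alpha$.
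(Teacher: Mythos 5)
Your proposal is correct and is essentially the paper's own argument: both hinge on lemma \ref{lem:Nmult}(2) applied with its ``$v$'' equal to $r_\alpha$ (giving the factorization of $N_-$ into $Rad(P_{-\alpha})$ times the root subgroup), together with the $N_{v\cdot\alpha}$-invariance of $X_w$ forced by $v r_\alpha < v$. The only difference is cosmetic: the paper translates by $(v r_\alpha)^{-1}$ to work at the identity basepoint and restricts the resulting isomorphism, whereas you conjugate the decomposition out to the basepoint $v r_\alpha B$ and conclude via translation-invariance of $\psi^{-1}(X_w)$ in the $N_{v\cdot\alpha}$ factor.
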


\begin{proof}
  Let $N_{v\cdot \alpha} := v (N \cap r_\alpha N_- r_\alpha) v^{-1}$
  denote the $T$-invariant one-parameter subgroup of $N_-$
  with $T$-weight $v\cdot\alpha$. 
  Similarly, let $N_{-\alpha} = r_\alpha N r_\alpha \cap N_-$.
  We will prove that the multiplication map
  $$
  N_{v\cdot\alpha} \times (X_w \cap v\cdot X_{r_\alpha}^\circ) 
  \to X_w|_{v r_\alpha}
  $$
  or equivalently
  $$
  N_{v\cdot\alpha} \times (X_w \cap v N_- r_\alpha B/B) 
  \to X_w \cap v r_\alpha N_-B/B 
  $$
  is the desired isomorphism of schemes. Acting by $(v r_\alpha)^{-1}$, 
  we may instead study
  $$ N_{-\alpha} \times \left(
  (r_\alpha v^{-1} \cdot X_w) \cap r_\alpha N_- r_\alpha B/B  \right)
  \to (r_\alpha v^{-1} \cdot X_w) \cap N_- B/B. $$
  First we confirm that this map takes values in the space claimed.
  Since $N_- r_\alpha B/B \subseteq r_\alpha N_- B/B$ by
  proposition \ref{prop:cover}, we see
  $r_\alpha N_- r_\alpha B/B \subseteq N_- B/B$ and
  $$ (r_\alpha v^{-1} \cdot X_w) \cap r_\alpha N_- r_\alpha B/B 
  \subseteq (r_\alpha v^{-1} \cdot X_w) \cap N_- B/B. $$
  Therefore it is enough to show that the target space is
  $N_{-\alpha}$-invariant. 
  Obviously $N_- B/B$ is invariant under $N_-$, hence under $N_{-\alpha}$.
  And $r_\alpha v^{-1} \cdot X_w$ is $N_{-\alpha}$-invariant iff
  $X_w$ is $N_{v\cdot\alpha}$-invariant, which it is since $v\cdot\alpha$
  is a negative root. (This is where we use $v r_\alpha < v$.)

  In the $w=1$ case, this map
  $$ N_{-\alpha} \times r_\alpha N_- r_\alpha B/B \to N_- B/B \iso N_- $$
  is the $v=r_\alpha$ case of the (latter) map in lemma \ref{lem:Nmult}, 
  hence an isomorphism. Since this map takes each intersection
  with $r_\alpha v^{-1} \cdot X_w$ into itself, each restriction
  $$ N_{-\alpha} \times 
  (r_\alpha v^{-1} \cdot X_w) \cap r_\alpha N_- r_\alpha B/B 
  \to (r_\alpha v^{-1} \cdot X_w) \cap N_- B/B $$
  of this map is also an isomorphism.
\end{proof}

There are several possibilities for the relative positions of
$v,vr_\alpha,w,wr_\alpha$; only one case will need the full strength of 
the Geometric Vertex Decomposition Lemma. We start with the cases
that don't.

\begin{Proposition}\label{prop:easycases}
  Let $w,v \in W$, and assume $v\geq w$. Fix a simple root $\alpha$
  and the submersion $\pi_\alpha : G/B \onto G/P_\alpha$.
  In the following, $\AA^1_\beta$ denotes the $1$-dimensional 
  representation of $T$ with weight $\beta$ and
  $\Pone_\beta$ its projective completion, and all isomorphisms
  claimed are $T$-equivariant.

  1. Assume $w < w r_\alpha$. Then $v r_\alpha \geq w$, and
  \begin{eqnarray*}
    X_w|_v 
    &\iso& \pi_\alpha(X_w|_{v,vr_\alpha}) \times \AA^1_{-v\cdot\alpha} \\
    X_w|_{v r_\alpha} 
    &\iso& \pi_\alpha(X_w|_{v,vr_\alpha}) \times \AA^1_{v\cdot\alpha}
  \end{eqnarray*}
  
  2. If $v r_\alpha \not \leq w$, 
  then $w > w r_\alpha$ and $v > v r_\alpha$. If in addition
  $X_{wr_\alpha}|_{v,vr_\alpha}$ is normal (as indeed it is), then
  $$ X_w|_v \times \Pone_{-v\cdot\alpha} \iso X_{wr_\alpha}|_{v,vr_\alpha}.$$

  3. If $w r_\alpha > w$ for all simple roots $\alpha$, then $w=1$
  and $X_w|_v$ is smooth for all $v$.
\end{Proposition}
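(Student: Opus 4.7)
The plan is to treat the three cases in turn, using the trivialization of $\pi_\alpha$ from Proposition \ref{prop:param} (translated by $v$) as the main geometric tool for parts (1) and (2), with part (3) immediate from Weyl group theory.

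For part (1), the hypothesis $w < wr_\alpha$ together with Proposition \ref{prop:BGG} says that $X_w = \pi_\alpha^{-1}(\pi_\alpha(X_w))$ is $\pi_\alpha$-saturated. Translating Proposition \ref{prop:param} by $v$, the open set $\pi_\alpha^{-1}(vN_-P_\alpha/P_\alpha)$ becomes a trivial $\Pone$-bundle over $v\cdot Rad(P_{-\alpha})$, and by Proposition \ref{prop:cover}(6) the two affine open subsets $vN_-B/B$ and $vr_\alpha N_-B/B$ correspond to the two standard $\AA^1$-charts of $\Pone$. Saturation then yields $X_w \cap \pi_\alpha^{-1}(vN_-P_\alpha/P_\alpha) \iso \pi_\alpha(X_w|_{v,vr_\alpha})\times\Pone$; restricting to each $\AA^1$-chart gives the two product decompositions, with opposite $T$-weights on the two $\AA^1$'s because the two sections sit at opposite ends of a common $T$-invariant $\Pone$. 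The relation $vr_\alpha\ge w$ is then immediate from saturation: $vB \in X_w$ forces the whole $\pi_\alpha$-fiber through $vB$, which also contains $vr_\alpha B$, into $X_w$.

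For part (2), the key consequence of the hypothesis is that $X_w|_{vr_\alpha}$ is empty (Proposition \ref{prop:cover}(2)). Both length conclusions follow quickly: if $v < vr_\alpha$ then transitivity would give $vr_\alpha > v \ge w$, so $vr_\alpha B \in X_w$; and if $w < wr_\alpha$ then Proposition \ref{prop:BGG} makes $X_w$ saturated, pulling the whole $\pi_\alpha$-fiber through $vB$ (and hence $vr_\alpha B$) into $X_w$; either contradicts $X_w|_{vr_\alpha}=\emptyset$. Part (1) then applies to $X_{wr_\alpha}$ (since $wr_\alpha < (wr_\alpha)r_\alpha = w$), giving $X_{wr_\alpha}|_{v,vr_\alpha} \iso S\times \Pone_{-v\cdot\alpha}$ with $S := \pi_\alpha(X_{wr_\alpha}|_{v,vr_\alpha})$. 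Using $X_w \subseteq X_{wr_\alpha}$, $X_w|_{vr_\alpha}=\emptyset$, and the closedness of $X_w$ in $G/B$, the intersection $X_w \cap X_{wr_\alpha}|_{v,vr_\alpha}$ equals $X_w|_v$, is contained in $S\times\AA^1$, yet is closed in the compactification $S\times\Pone$.

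The crux is then to upgrade the induced projection $X_w|_v \to S$ from the birational morphism provided by the generic $1$:$1$ clause of Proposition \ref{prop:BGG} to an isomorphism. It is proper as the restriction of $S\times\Pone\to S$ to a closed subscheme; and it cannot have any positive-dimensional fiber $\{s\}\times\AA^1$, since the closure of such a fiber in $S\times\Pone$ would hit $(s,\infty)$, contradicting $X_w|_v \subseteq S\times\AA^1$. So the map is finite birational; the assumed normality of $X_{wr_\alpha}|_{v,vr_\alpha}\iso S\times\Pone$ descends to $S$ (with the other factor $\Pone$ regular), and Zariski's Main Theorem concludes the map is an isomorphism, yielding $X_w|_v\times\Pone_{-v\cdot\alpha} \iso X_{wr_\alpha}|_{v,vr_\alpha}$. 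Part (3) is immediate: an element $w\in W$ with $wr_\alpha > w$ for every simple root $\alpha$ has no right descent and hence equals $1$, and $X_1|_v = vN_-B/B \iso N_-$ is an affine space, hence smooth. The main obstacle of the entire argument is the Zariski's Main Theorem step in part (2), which is precisely where the normality hypothesis on $X_{wr_\alpha}|_{v,vr_\alpha}$ does its essential work.
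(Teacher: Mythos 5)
Your proposal is correct, and parts (1) and (3) run exactly as in the paper (saturation via Proposition \ref{prop:BGG}, the $v$-translate of the trivialization $\gamma$ from Proposition \ref{prop:param}, and identification of the two charts as complements of the $0$- and $\infty$-sections via Proposition \ref{prop:cover}(6)); you also correctly read the hypothesis of part (2) in the form the paper's own proof uses, namely $vr_\alpha\not\geq w$, i.e.\ $X_w|_{vr_\alpha}=\emptyset$ (the ``$\not\leq$'' in the statement is evidently a typo). In part (2) your endgame coincides with the paper's --- show $\pi_\alpha\colon X_w|_v\to \pi_\alpha(X_{wr_\alpha}|_{v,vr_\alpha})$ is finite and birational, note the target is normal because $X_{wr_\alpha}|_{v,vr_\alpha}$ is a trivial $\Pone$-bundle over it, and conclude isomorphism --- but your route to finiteness is genuinely different. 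The paper first proves the map is an isomorphism over the locus $U$ of finite fibers, observes the remaining fibers would be whole $\Pone$'s, and then excludes these by a Bruhat-order argument: the $N_-$-invariant closed set $Q$ of big fibers has upward-closed index set, $vr_\alpha\not\geq w$ forces $v\notin Q$, and Proposition \ref{prop:cover}(\ref{prop:cover:missed}) keeps $X_w|_v$ away from $X_u$ for $u\in Q$. You instead observe that $X_w|_{vr_\alpha}=\emptyset$ makes $X_w|_v=X_w\cap\pi_\alpha^{-1}(vN_-P_\alpha/P_\alpha)$, hence closed in $S\times\Pone$, while $X_w|_v\subseteq vN_-B/B$ forces it to miss the $\infty$-section (which lies in the chart $vr_\alpha N_-B/B$); so every fiber is a closed proper subscheme of $\Pone$, hence finite, and properness plus finite fibers gives finiteness at once. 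This is a cleaner use of the same emptiness hypothesis: it buys you finiteness directly from compactness of the $\Pone$-fibers and dispenses with the $Q$-set and the intermediate $U$-step, whereas the paper's argument is more combinatorially explicit about which $\pi_\alpha$-fibers lie entirely in $X_w$. Both proofs lean equally on the normality of $X_{wr_\alpha}|_{v,vr_\alpha}$ at the Zariski's-Main-Theorem step, which, as you note, is where that hypothesis does its work.
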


\begin{proof}
  For any $u\in W$ we have $uP_\alpha = ur_\alpha P_\alpha$,
  so the points $uB, ur_\alpha B$ lie in the same
  $\Pone$ fiber of $\pi_\alpha$.
  If $w < w r_\alpha$, then $\pi_\alpha: X_w \to \pi_\alpha(X_w)$
  is a $\Pone$-bundle, so $u \geq w \iff u r_\alpha \geq w$.

  1. Since we're assuming $w < wr_\alpha$ and $w \leq v$, by the above we
  have $w \leq v r_\alpha$ too. 
  
  Since $w < w r_\alpha$, by the BGG/Demazure proposition \ref{prop:BGG}
  we know $X_w$ is a union of fibers of $\pi_\alpha$.  In particular, 
  restricting the bundle $\pi_\alpha$ to 
  the base $v\cdot N_- P_\alpha/P_\alpha$ we see
  $$ 
  X_w|_{v,vr_\alpha}\ \iso\ 
  \pi_\alpha(X_w|_{v,vr_\alpha}) \times \Pone_{-v\cdot\alpha}
  \qquad \text{using $\gamma$ from proposition \ref{prop:param}.}
  $$
  Twist proposition \ref{prop:cover} part (\ref{prop:cover:image})
  by $v,vr_\alpha$:
  $$
  X_1|_{v,vr_\alpha} = X_1|_v \coprod v\cdot X_{r_\alpha}^\circ,
  \qquad
  X_1|_{v,vr_\alpha} 
  = X_1|_{vr_\alpha} \coprod v r_\alpha \cdot X_{r_\alpha}^\circ 
  $$
  Intersect with $X_w$ and rewrite:
  $$
  X_w|_v = X_w|_{v,vr_\alpha} \setminus (v\cdot X_{r_\alpha}^\circ), \qquad
  X_w|_{v r_\alpha}
  = X_w|_{v,vr_\alpha} \setminus (v r_\alpha \cdot X_{r_\alpha}^\circ)
  $$
  By proposition \ref{prop:param}, $v X_{r_\alpha}^\circ$ and 
  $v r_\alpha X_{r_\alpha}^\circ$ correspond under $\gamma$
  to the $\infty$ and $0$ sections. So the isomorphism above restricts to
  $$ 
  X_w|_v          \ 
  \iso\ \pi_\alpha(X_w|_{v,vr_\alpha}) 
  \times (\Pone_{-v\cdot\alpha}\setminus \{\infty\}), 
  \qquad
  X_w|_{vr_\alpha}\ 
  \iso\ \pi_\alpha(X_w|_{v,vr_\alpha}) 
  \times (\Pone_{-v\cdot\alpha}\setminus \{0\})
  $$
  as desired. 

  2. Since $v \in X_w$, if $w < wr_\alpha$ then $v r_\alpha \in X_w$, 
  contradiction.
  
  If $v < v r_\alpha$, then $w \leq v$ implies $w < v r_\alpha$,
  again a contradiction. 
  
  Consider the map 
  $$ \pi: X_w|_v \to \pi_\alpha(X_{wr_\alpha}|_{v,vr_\alpha}) $$
  restricted from $\pi_\alpha$.
  By the BGG/Demazure proposition \ref{prop:BGG}, it is onto and 
  generically $1$:$1$, which is where we use the assumption $w r_\alpha < w$.
  Since $X_{wr_\alpha}|_{v,vr_\alpha}$ is assumed normal, 
  and is a (trivial) $\Pone$-bundle over the target, 
  the target is also normal. Consider the open subset 
  $$ U = \{ u \in \pi_\alpha(X_{wr_\alpha}|_{v,vr_\alpha}) 
  : \pi^{-1}(u) \text{ is finite} \}, $$
  which is also normal. Then the map $\pi : \pi^{-1}(U) \onto U$ is proper 
  (being the restriction of the proper map $X_w \onto \pi_\alpha(X_w)$),
  finite by construction, and generically $1$:$1$; 
  since its target $U$ is normal (being open 
  in the normal variety $\pi_\alpha(X_{wr_\alpha}|_{v,vr_\alpha})$)
  we see $\pi : \pi^{-1}(U) \onto U$ is an isomorphism.
  
  What this shows is that the fibers of $\pi$ that are not single
  (reduced) points are entire $\Pone$s 
  (the fibers of $\pi_\alpha: G/B \onto G/P_\alpha$). 
  We now wish to show that no such fibers occur, i.e. $\pi$ is an isomorphism.

  Let $\pi_\alpha|_{X_w}: X_w \to G/P_\alpha$ be 
  the restriction of $\pi_\alpha$. Since it is $N_-$-equivariant, 
  over each $N_-$-orbit in $G/P_\alpha$ the fiber is constant. 
  By the BGG/Demazure proposition \ref{prop:BGG}, the $\pi_\alpha$-preimages 
  of those orbits are $X_u^\circ \coprod X_{u r_\alpha}^\circ$ 
  for $u < u r_\alpha$.

  Intersecting with $X_w|_v$ to get the $\pi$-preimages, 
  and using proposition \ref{prop:cover} part (\ref{prop:cover:missed}),
  the $\pi$-preimage of $\pi_\alpha(X_u^\circ)$ is empty unless $u \leq v$.

  Let $Q = \{gB \in G/B : X_w \supseteq \pi_\alpha^{-1}(gP_\alpha)\}$,
  the $N_-$-invariant closed set of big $\pi_\alpha|_{X_w}$ fibers.
  Being $N_-$-invariant, it has a Bruhat decomposition,
  $$ Q = \coprod_{u\in W\ :\ uB\in Q} X_u^\circ; $$
  being closed, its $N_-$-orbit set $\{u\in W : uB\in Q\}$ is closed under
  going up the Bruhat order.

  Now we use the assumption $v r_\alpha \not\geq w$, to see $v \notin Q$. 
  Hence $u\in Q \implies u\not\leq v$. 

  Hence $\pi : X_w|_v \to \pi_\alpha(X_{wr_\alpha}|_{v,vr_\alpha})$
  has no $\Pone$-fibers, so is a finite degree $1$ proper map, 
  hence an isomorphism.

  Now we use the fact from proposition \ref{prop:param} 
  that $\pi_\alpha$ is a trivial $\Pone$-bundle over
  $\pi_\alpha(X_{wr_\alpha}|_{v,vr_\alpha})$ to derive the desired
  isomorphism.
\end{proof}

\junk{
  The remaining possibility, $v > v r_\alpha \geq w > wr_\alpha$,
  It sits at the heart of
  the coming application of the Geometric Vertex Decomposition Lemma.
}

\begin{Theorem}\label{thm:schubertCM}
  Each Schubert patch $X_w|_v$ is normal and Cohen-Macaulay.
\end{Theorem}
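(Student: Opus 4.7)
The plan is to prove this by double induction on $(\ell(v), \ell(w))$, handling the vacuous case $v \not\geq w$ by proposition \ref{prop:cover}(2). The base case is $v = w$, where $X_w|_w = X_w^\circ$ is a Schubert cell, known from the BGG/Demazure construction to be an affine space and hence smooth. For the inductive step, fix $v > w$, so $v \neq 1$, and choose a simple root $\alpha$ with $v r_\alpha < v$.

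If $w r_\alpha > w$, then Case 1 of proposition \ref{prop:easycases} exhibits both $X_w|_v$ and $X_w|_{v r_\alpha}$ as $\AA^1$-bundles over the common base $\pi_\alpha(X_w|_{v, v r_\alpha})$, hence as isomorphic varieties. The inductive hypothesis at $\ell(v r_\alpha) < \ell(v)$ (noting $v r_\alpha \geq w$ by the lifting property) gives that $X_w|_{v r_\alpha}$ is normal and Cohen-Macaulay, so the same holds for $X_w|_v$.

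Otherwise $w r_\alpha < w$, which is the main case. The plan is to apply the Geometric Vertex Decomposition Lemma to $\barX = X_w|_{v, v r_\alpha}$, sitting inside the open chart $X_1|_{v, v r_\alpha}$ that proposition \ref{prop:param}, translated by $v$, identifies with $H \times L\Pone$ (with $H = v \cdot Rad(P_{-\alpha})$). I orient this identification so the $\infty$-section is $v \cdot X_{r_\alpha}^\circ$; then $X = \barX \cap (H \times L) = X_w|_v$ and $\Lambda \times \{\infty\} = X_w \cap v \cdot X_{r_\alpha}^\circ$. The hypotheses of the GVD Lemma check out as follows: $\barX$ is irreducible and reduced as an open subscheme of $X_w$; the projection $\barX \to \Pi$ is generically $1$:$1$ by the BGG/Demazure proposition \ref{prop:BGG}, since $w r_\alpha < w$; and $\Pi$ equals $\pi_\alpha(X_{w r_\alpha}|_{v, v r_\alpha})$ (using $\pi_\alpha(X_w) = \pi_\alpha(X_{w r_\alpha})$), which Case 1 of proposition \ref{prop:easycases} applied to $w r_\alpha$ (since $(w r_\alpha) r_\alpha = w > w r_\alpha$) identifies with the $\AA^1$-quotient of $X_{w r_\alpha}|_v$, so the outer inductive hypothesis at $\ell(w r_\alpha) < \ell(w)$ makes $\Pi$ normal and Cohen-Macaulay. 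Finally, lemma \ref{lem:conecomponent} identifies $\Lambda \times \AA^1$ with $X_w|_{v r_\alpha}$, which is either empty (when $v r_\alpha \not\geq w$, trivially fine) or normal and Cohen-Macaulay by the inner inductive hypothesis at $\ell(v r_\alpha) < \ell(v)$.

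With those verified, the Cohen-Macaulay conclusion of the Geometric Vertex Decomposition Lemma together with lemma \ref{lem:gvdnormal} (giving normality) yield that $\barX$ is normal and Cohen-Macaulay, so its open subscheme $X_w|_v$ is as well. The main obstacle is assembling the dictionary between the GVD's abstract data $(\barX, \Pi, \Lambda)$ and Schubert-theoretic patches --- recognizing $\Pi$ as an $\AA^1$-quotient of the smaller patch $X_{w r_\alpha}|_v$ (via Case 1 of proposition \ref{prop:easycases} applied at $w r_\alpha$ rather than $w$), and $\Lambda$ as an $\AA^1$-quotient of the smaller patch $X_w|_{v r_\alpha}$ (via lemma \ref{lem:conecomponent}), with the $\Pone$-chart oriented so that $X$ matches $X_w|_v$ and not $X_w|_{v r_\alpha}$; once this matchup is in place the double induction closes uniformly.
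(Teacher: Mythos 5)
Your proposal is correct and is essentially the paper's own argument: the same chart from proposition \ref{prop:param} twisted by $v$, the same identification of $\Pi$ via case (1) of proposition \ref{prop:easycases} applied at $wr_\alpha$ and of $\Lambda$ via lemma \ref{lem:conecomponent}, and the same appeal to the Geometric Vertex Decomposition Lemma plus lemma \ref{lem:gvdnormal}. The only deviations are organizational and harmless: you run a double induction on $(\ell(v),\ell(w))$ where the paper gets by with a single induction on $v$ (since $\Pi$ can equally be read off $X_{wr_\alpha}|_{vr_\alpha}$), and you absorb the subcase $vr_\alpha\not\geq w$ into the GVD step with $\Lambda=\emptyset$ (legitimate, as the GVD Lemma and lemma \ref{lem:gvdnormal} hold vacuously there --- indeed $\barX\to\Pi$ is then finite and birational onto a normal variety) where the paper instead invokes case (2) of proposition \ref{prop:easycases}.
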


\begin{proof}
  We may assume $w\leq v$, for otherwise $X_w|_v$ is empty
  by proposition \ref{prop:cover}. 

  The proof is induction on $v$. 
  If $v=1$, then $w=1$, and $X_w|_v = N_- B/B$ is smooth.
  Otherwise we fix a simple root $\alpha$ with $v r_\alpha < v$
  (we know one exists by the last claim in proposition \ref{prop:easycases}, 
  though it is phrased there for $w$).

  By proposition \ref{prop:easycases},
  \begin{eqnarray*}
    w r_\alpha > w \qquad &\implies& \qquad
    X_w|_v \iso X_w|_{v r_\alpha}\qquad \text{albeit not $T$-equivariantly} \\
    w r_\alpha < w,\text{ but }v r_\alpha \not\!<w,\qquad &\implies& \qquad
    X_w|_v \times \AA^1_{-v\cdot\alpha} 
    \iso X_{w r_\alpha}|_v \iso X_{w r_\alpha}|_{v r_\alpha}
  \end{eqnarray*}
  and since $v r_\alpha < v$, by induction we know each right-hand side
  is normal and Cohen-Macaulay. (Indeed, we need that, to be able to invoke
  case (2) of proposition \ref{prop:easycases}.)

  We are now in the case $v > v r_\alpha \geq w > w r_\alpha$. 
  (In fact $v r_\alpha > w$ automatically, as otherwise we would have 
  $v > v r_\alpha > v$.) It is here that we will finally apply the
  Geometric Vertex Decomposition Lemma.

  Consider the open embedding
  $\gamma : Rad(P_{-\alpha}) \times \Pone \to G/B$
  from proposition \ref{prop:param}, and twist it by $v$.
  By proposition \ref{prop:cover}, the image of $v\cdot \gamma$ is
  $X_1|_{v,v r_\alpha}$.
  Let $\barX \subseteq Rad(P_{-\alpha}) \times \Pone$ be the
  preimage of $X_w$ under this map. In particular,
  $$ \barX \iso X_w \cap X_1|_{v,v r_\alpha} = X_w|_{v,v r_\alpha} $$
  so it is automatically reduced and irreducible.
  Our goal is to show that $\barX$ is normal and Cohen-Macaulay.
  
  {\bf Checking the conditions of the Geometric Vertex Decomposition Lemma.}
  To do so, we need first compute $\Pi$ and $\Lambda$. 
  Consider the two commuting squares
  $$ 
  \begin{array}{ccccccc}
    Rad(P_{-\alpha}) \times \Pone 
    & \stackrel{v \cdot \gamma}\longrightarrow & \ \ G/B 
    & \longleftarrow & X_w \\
    \downarrow & & \ \ \ \ \ \ \ \downarrow \pi_\alpha & & \downarrow \\
    Rad(P_{-\alpha}) &  \stackrel{v\cdot}\longrightarrow & \ \ G/P_\alpha
    & \longleftarrow & \pi_\alpha(X_w) & = & \pi_\alpha(X_{w r_\alpha})
  \end{array}
  $$
  where each $\rightarrow$ is an open embedding
  and each $\leftarrow$ is a closed embedding. Since each vertical map
  is (proper and) surjective, so too is the map from 
  the pullback of the top row, $\barX$, to the pullback of the bottom row.
  Hence the pullback of the bottom row is the $\Pi \subseteq Rad(P_{-\alpha})$
  we seek.

  As in proposition \ref{prop:cover}, the image of 
  $ Rad(P_{-\alpha}) \to G/P_\alpha$ is $v N_- P_\alpha / P_\alpha$,
  the ($v$ twist of the) big cell.  
  Hence the pullback $\Pi$ of the bottom row is 
  $v N_- P_{\alpha}/P_{\alpha}$ intersected with
  $\pi_\alpha(X_w) = \pi_\alpha(X_{w r_\alpha})$.
  By part (1) of proposition \ref{prop:easycases} applied to $w r_\alpha$
  (not $w$),
  $$ X_{w r_\alpha}|_{v r_\alpha}
  \iso \pi_\alpha(X_{w r_\alpha}|_{v,vr_\alpha}) \times \AA^1_{v\cdot\alpha}
  = \Pi \times \AA^1_{v\cdot\alpha}. $$
  By induction, the left-hand side is normal and Cohen-Macaulay, 
  so $\Pi$ is too.

  The last condition left to check on $\Pi$ is that
  $\barX \onto \Pi$ is generically $1$:$1$.
  By the assumption $w > w r_\alpha$, 
  the surjection $X_w \onto \pi_\alpha(X_w)$ is generically $1$:$1$, and 
  the map $\barX \onto \Pi$ is just a restriction of that to an open subset.

  Even before we compute $\Lambda$ exactly, we point out that 
  $(1,0) \stackrel{v\cdot\gamma}{\mapsto} vB \in X_w|_{v,vr_\alpha}$ 
  by the assumption $v\geq w$, 
  so $\barX \not\subseteq Rad(P_{-\alpha}) \times \{\infty\}$.

  Now we compute $\Lambda \times \{\infty\} 
  = \barX \cap (Rad(P_{-\alpha}) \times \{\infty\})$.
  Under the open embedding $v\cdot \gamma$,
  $$ \Lambda \times \{\infty\}) 
  \ \iso\ v\cdot \gamma(\Lambda \times \{\infty\}) 
  = X_w \ \cap\  v\cdot \gamma(Rad(P_{-\alpha}) \times \{\infty\}). $$
  Here 
  $$ \gamma(Rad(P_{-\alpha}) \times \{\infty\}) 
  = Rad(P_{-\alpha}) r_\alpha B/B
  = N_- r_\alpha B/B
  = X_{r_\alpha}^\circ $$
  so
  $$  \Lambda \times \{\infty\}) 
  \ \iso\ v\cdot \gamma(\Lambda \times \{\infty\}) 
  = X_w \ \cap\  v \cdot X_{r_\alpha}^\circ.
  $$
  Now we use lemma \ref{lem:conecomponent} to relate this space 
  $X_w \cap v \cdot X_{r_\alpha}^\circ$ to $X_w|_{v r_\alpha}$,
  which is reduced (and irreducible) since it is an open set in $X_w$,
  and is normal and Cohen-Macaulay by induction.

  With these conditions on $\Lambda$ and the ones already
  checked on $\Pi$, we may apply
  the Geometric Vertex Decomposition Lemma and lemma \ref{lem:gvdnormal}, 
  and see that $\barX$ is normal and Cohen-Macaulay.
\end{proof}

A stronger statement is known: 
for any nef line bundle $\mathcal L$ on $G/B$, the affine variety 
$\Spec \bigoplus_{n\in \naturals} H^0(X_w; \mathcal L^{\tensor n})$
is normal and Cohen-Macaulay. (While it would seem that this property
of $(X_w,\mathcal L)$ should be called ``affinely normal'' and
``affinely Cohen-Macaulay'', the regrettably standard terminology
is ``projectively normal'' and ``arithmetically Cohen-Macaulay''.)
We did not see how to derive these stronger statements with
the techniques of this paper.

We extract the following result from the above proof, filling in 
the case that was left open in proposition \ref{prop:easycases}.

\begin{Proposition}\label{prop:willems}
  Assume $v > v r_\alpha > w > w r_\alpha$. 
  Then there is a $T$-equivariant flat, locally free, degeneration 
  of $X_w|_v$ to the reduced scheme
  $$ 
  \left( \Pi \times \{0\} \right)
  \ \cup_{\Lambda \times \{\vec 0\}} \ 
  \left( \Lambda \times \AA^1_{v\cdot \alpha} \right)
  $$
  where 
  $$ 
  \Pi \times \AA^1_{v\cdot \alpha} \iso X_{w r_\alpha}|_{vr_\alpha} 
  \qquad \text{and} \qquad
  \Lambda \times \AA^1_{-v\cdot \alpha} \iso X_w|_{v r_\alpha},
  \qquad \text{$T$-equivariantly}. $$
\end{Proposition}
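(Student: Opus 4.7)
The proof should be extracted from the geometric vertex decomposition family already constructed in the proof of Theorem \ref{thm:schubertCM}, restricted so that its generic fiber is $X_w|_v$ rather than $X_w|_{v, v r_\alpha}$. More precisely, I would reuse the identification of $X_w|_{v, v r_\alpha}$ with $\barX \subseteq Rad(P_{-\alpha}) \times \Pone$ via $v \cdot \gamma$ from proposition \ref{prop:param}, and form the standard family
$$ F := \overline{\{ (h, \ell, z) : (h, z^{-1} \ell) \in \barX \}} \ \subseteq\  Rad(P_{-\alpha}) \times \Pone \times \AA^1, $$
which is automatically flat over $\AA^1$. All hypotheses of the Geometric Vertex Decomposition Lemma have already been verified for this $\barX$ inside the proof of Theorem \ref{thm:schubertCM}, so $F_0 = (\Pi \times \{0\}) \cup_{\Lambda \times \{0\}} (\Lambda \times \Pone)$ as a reduced scheme, with $\Pi$ and $\Lambda$ the subvarieties computed there.

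Next I would restrict the family $F$ to the open set $Rad(P_{-\alpha}) \times L \times \AA^1$, i.e. remove the section $\ell = \infty$. Open restriction preserves flatness, and the generic fiber becomes
$$\barX \cap (Rad(P_{-\alpha}) \times L) \ =\  X_w|_{v, v r_\alpha} \setminus (v \cdot X_{r_\alpha}^\circ) \ =\  X_w|_v,$$
using that $v \cdot \gamma$ identifies $Rad(P_{-\alpha}) \times \{\infty\}$ with $v \cdot X_{r_\alpha}^\circ$ (proposition \ref{prop:param}) together with the Bruhat identity $X_1|_{v, v r_\alpha} = X_1|_v \coprod v \cdot X_{r_\alpha}^\circ$ of proposition \ref{prop:cover}. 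The special fiber of the restricted family is
$$\bigl((\Pi \times \{0\}) \cup_{\Lambda \times \{0\}} (\Lambda \times \Pone)\bigr) \cap (Rad(P_{-\alpha}) \times L) \ =\  (\Pi \times \{0\}) \cup_{\Lambda \times \{\vec 0\}} (\Lambda \times \AA^1),$$
which remains reduced because reducedness is local.

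The $T$-equivariance is transparent: the embedding $v \cdot \gamma$ is $T$-equivariant, the vertex-decomposition family is built from the $\Gm$-scaling of $L$ (which commutes with $T$), and $T$ acts trivially on the degeneration parameter. Local freeness of the family --- in the sense that every fiber with $z \neq 0$ is canonically isomorphic to $X_w|_v$ via scaling $\ell$ --- is built into the construction. Finally, the identifications of the two pieces of the special fiber with Schubert patches are exactly the ones recorded inside the proof of Theorem \ref{thm:schubertCM}: $\Pi \times \AA^1_{v\cdot\alpha} \iso X_{w r_\alpha}|_{v r_\alpha}$ is part (1) of proposition \ref{prop:easycases} applied to the pair $(w r_\alpha, v r_\alpha)$ (whose ascent under $r_\alpha$ at the first coordinate is $w$), while $\Lambda \iso X_w \cap v \cdot X_{r_\alpha}^\circ$ combined with lemma \ref{lem:conecomponent} applied to $w$ gives $\Lambda \times \AA^1_{-v\cdot\alpha} \iso X_w|_{v r_\alpha}$. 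The only real obstacle, once these pieces are assembled, is bookkeeping of the $T$-weights on the two affine line factors, which amounts to tracking whether a given $\AA^1$ arises from the $L$-direction in the $\gamma$ parametrization or from the root-group parametrization used in lemma \ref{lem:conecomponent}.
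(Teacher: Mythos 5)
Your construction is essentially the paper's: Proposition \ref{prop:willems} is indeed meant to be extracted from the geometric vertex decomposition family already built and verified inside the proof of Theorem \ref{thm:schubertCM}, and your one explicit addition --- restricting that family to the affine chart $Rad(P_{-\alpha})\times L\times \AA^1$ so that the fibers over $z\neq 0$ become $X_w|_v$ rather than $X_w|_{v,vr_\alpha}$, while the special fiber becomes $(\Pi\times\{0\})\cup_{\Lambda\times\{0\}}(\Lambda\times\AA^1)$ and stays reduced --- is exactly the (unstated) step needed to match the statement. The identifications of $\Pi$ and $\Lambda$ with $X_{wr_\alpha}|_{vr_\alpha}$ and $X_w|_{vr_\alpha}$ are the ones recorded in that proof, and the weight bookkeeping you defer is not really an open issue: the $L$-direction of the $v$-twisted chart $\gamma$ is pinned down in the proof of proposition \ref{prop:easycases} (part (1)), and the affine line in lemma \ref{lem:conecomponent} is the root group $N_{v\cdot\alpha}$, so citing those two results settles the $T$-weights with no further work.

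The one genuine gap is the clause ``locally free.'' Your justification --- that every fiber over $z\neq 0$ is canonically isomorphic to $X_w|_v$ by scaling $\ell$ --- only says the family is a product over $\Gm$, which is true of any such $\Gm$-sweep and carries no information at $z=0$; local freeness is a statement about the ($T$-weight-graded) coordinate ring of the total family as a module over $k[z]$ including the special point, and it is precisely what makes the equivariant Hilbert series, cohomology class and $K$-class constant in the family, which is the reason the proposition is stated at all (see Theorem \ref{thm:subword}). Because the fibers here are affine patches rather than projective schemes, flatness alone does not give this. The paper's argument is the remark immediately following the proposition: the $T$-action on the fibers contains the one-parameter subgroup $v\cdot\check\rho$ acting with only positive weights, so (e.g.\ by taking GIT quotients one reduces to the projective case, or equivalently because the weight spaces of the coordinate rings are then finite-dimensional) each weight space of $k[F]$ is a finitely generated flat, hence free, module over the principal ideal domain $k[z]$. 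You need to supply this positivity argument, or an equivalent one, to close the proof.
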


To see that the family constructed in theorem \ref{thm:GVDs} is not just
flat but locally free, we note that the $T$-action on the fibers
contains a one-parameter subgroup $v\cdot \check \rho$ acting with
only positive weights. Taking various GIT quotients, one may reduce to
the projective case, where flat families are automatically locally free.

\subsection{Connections to subword complexes and the Billey-Willems formula}
\label{ssec:subword}
The proof of theorem \ref{thm:schubertCM} used a simple root $\alpha$
with $v r_\alpha < v$. 
To unroll the induction, then, one needs a {\dfn reduced word} for $v$, 
which is a minimal sequence $Q = \{\alpha_1,\alpha_2,\ldots,\alpha_k\}$ 
of simple roots such that $v = \prod_{i=1}^k r_{\alpha_i}$.
(Careful: the first root used in the proof is then $\alpha_k$, not $\alpha_1$.)

In \cite{subword}, we associated a simplicial complex 
$\Delta(Q,w)$ to a (not necessarily reduced) word $Q$ and a Weyl
group element $w$, called the {\dfn subword complex}.
The vertices are the elements of $Q$, and a subset of $Q$ is a facet
(maximal face) if its complement is a reduced word for $w$. 
In particular all the facets have the same dimension.

\begin{Theorem}\label{thm:subword}
  Let $w\leq v$ in the Bruhat order, 
  and let $Q$ be a reduced expression for $v$. 
  \begin{enumerate}
  \item \cite{Billey} The restriction of the equivariant
    cohomology class $[X_w] \in H^*_T(G/B)$ to the point $v$
    can be computed as a sum over the facets of $\Delta(Q,w)$.
  \item \cite{Willems} The restriction of the equivariant
    $K$-class $[X_w] \in K^*_T(G/B)$ to the point $v$
    can be computed as an alternating sum over 
    the interior faces of $\Delta(Q,w)$.
  \item $X_w|_v$ has a $T$-equivariant flat locally free degeneration to 
    the Stanley-Reisner scheme of (an irrelevant multicone on) $\Delta(Q,w)$.
  \end{enumerate}
\end{Theorem}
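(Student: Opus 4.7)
The plan is to prove part (3) directly by iterating Proposition \ref{prop:willems}, and then to deduce parts (1) and (2) as consequences. First I would fix a reduced word $Q = (\alpha_1, \ldots, \alpha_k)$ for $v$ and induct on $k$. At the outermost step take $\alpha = \alpha_k$, so $v r_\alpha < v$. Depending on whether $w r_\alpha \lessgtr w$ and whether $v r_\alpha \geq w$, exactly one case of Proposition \ref{prop:easycases} or the nontrivial case of Proposition \ref{prop:willems} applies to $X_w|_v$. In each case one gets either an isomorphism $X_w|_v \iso (\text{smaller patch}) \times \AA^1_{\pm v\cdot\alpha}$ when no degeneration is needed, or a $T$-equivariant flat degeneration whose special fiber is the union $(\Pi\times\{0\}) \cup_{\Lambda\times\{\vec 0\}} (\Lambda \times \AA^1_{v\cdot\alpha})$ of Proposition \ref{prop:willems}, with $\Pi \times \AA^1_{v\cdot\alpha}$ and $\Lambda\times \AA^1_{-v\cdot\alpha}$ identified with the smaller Schubert patches $X_{w r_\alpha}|_{v r_\alpha}$ and $X_w|_{v r_\alpha}$ respectively. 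Thus in every case the limit is built from strictly smaller Schubert patches together with cone coordinates of $T$-weight $\pm v\cdot\alpha$.

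Next I would apply the inductive hypothesis to each of these smaller patches and compose the degenerations into a single one (possible because throughout the construction the one-parameter subgroup $v\cdot\check\rho \subseteq T$ observed after Proposition \ref{prop:willems} acts on all fibers with only positive weights, giving both the requisite $\Gm$-equivariance to compose Rees-algebra degenerations and the local freeness). After $k$ steps the accumulated degeneration presents $X_w|_v$ as a $T$-equivariant flat locally free family whose special fiber is a reduced union of coordinate subspaces inside an affine space with coordinates indexed by the letters of $Q$, plus the accumulated cone variables. A component of this limit corresponds to a sequence of binary choices ``use $\alpha_i$ to reduce $w$ to $w r_{\alpha_i}$'' or ``don't'', one at each step $i$; it is nonempty precisely when these choices assemble into a reduced subword of $Q$ whose product is $w$, i.e. when the complementary set is a facet of $\Delta(Q,w)$. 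This identifies the limit with the Stanley-Reisner scheme of an irrelevant multicone on $\Delta(Q,w)$, proving (3).

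With (3) in hand, parts (1) and (2) follow because equivariant cohomology and $K$-theory classes are preserved under flat locally free degeneration and unaffected by taking an irrelevant multicone. The restriction $[X_w]|_{vB}$ thus agrees with the corresponding class of the Stanley-Reisner scheme at the cone point, whose standard description is a sum over facets of products of the weights $v\cdot\alpha_i$ (for cohomology, giving Billey's formula) and an inclusion-exclusion over interior faces (for $K$-theory, giving Willems'). Strictly speaking parts (1) and (2) are already known, so this line of argument is a reinterpretation via (3).

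The main obstacle is bookkeeping. At each step of the iteration one must correctly identify which of the four sub-cases (the three of Proposition \ref{prop:easycases} and the geometric vertex decomposition case of Proposition \ref{prop:willems}) applies, and one must track the $T$-weights of all accumulated affine factors so that the weights $v\cdot\alpha_i$ appearing in the Billey/Willems formulas are exactly what pops out. In particular, one has to match the two-way split produced by Proposition \ref{prop:willems} with the recursion of the subword complex obtained by deleting the last letter of $Q$ (the piece $\{\alpha_k\} \ast \Delta(Q', w)$ appearing when $w r_{\alpha_k} > w$ and the piece $\Delta(Q', w r_{\alpha_k})$ requiring $w r_{\alpha_k} < w$), and verify that the trivial-degeneration cases of Proposition \ref{prop:easycases} correspond to the combinatorially degenerate cases where $\Delta(Q,w)$ is a cone over $\Delta(Q',\cdot)$. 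Making this alignment precise, and confirming that concatenated one-parameter degenerations really do produce a single flat family over $\AA^1$ with the asserted limit, is the essential work.
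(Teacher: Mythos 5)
Your proposal follows essentially the same route as the paper's own proof sketch: induct along a reduced word for $v$ using the last letter, match the ``required/forbidden'' cases to proposition \ref{prop:easycases} and the ``optional'' case to the geometric vertex decomposition of proposition \ref{prop:willems} so that the iterated limit is the Stanley--Reisner scheme of $\Delta(Q,w)$, then deduce (1) and (2) from invariance of equivariant cohomology and $K$-classes in locally free $T$-equivariant families (with the interior-face/ball structure of the subword complex giving Willems' and hence Billey's formula). The paper likewise defers the concatenation and weight bookkeeping to \cite{Kostant}, so your sketch is at the same level of detail and correctness as the one in the text.
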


All the geometric groundwork has been laid in propositions
\ref{prop:easycases} and \ref{prop:willems}. The details of the bookkeeping 
will appear elsewhere \cite{Kostant}, but we include a sketch here.

\begin{proof}[Proof sketch]
  The complex $\Delta(Q,w)$ has a ``vertex decomposition'' into two
  subcomplexes, depending on whether one's subword uses the last letter
  in $Q$ or not. Things are simple when the last letter is required
  or forbidden, and more interesting when it is optional.

  The simple cases exactly match proposition \ref{prop:easycases},
  and the interesting case matches the geometric vertex decomposition
  in proposition \ref{prop:willems}. This, and induction, prove
  the third claim. Since the $K$-class and cohomology class are
  invariant in locally free $T$-equivariant families, they can be
  computed from the subword complex.
  
  In \cite{subword} we prove that the subword complex is homeomorphic
  to a ball, so its $K$-class can be computed as an alternating sum over 
  the interior faces. We characterize those faces in a way that exactly
  matches the terms in Willems' formula. Then either the degeneration,
  or Willems' formula, imply Billey's formula.
\end{proof}

This degeneration to the Stanley-Reisner scheme of a shellable
simplicial ball generalizes ones from \cite{KodRag,GhoRag,KreLak}
which applied to Schubert patches in various types of Grassmannians.
Very loosely speaking, in either situation one needs a certain
multiplicity-freeness to ensure even that the degeneration is
generically reduced. In the Grassmannian cases the Pl\"ucker embedding
is into a minuscule representation, and the minusculeness provides the
multiplicity-freeness.  Here the multiplicity-freeness comes from
lemma \ref{lem:conecomponent} and from the fact that the projection
$X_w \to \pi_\alpha(X_w)$ is generically $1$:$1$ for $w>w r_\alpha$.

\bibliographystyle{alpha}    

\begin{thebibliography}{10}
  
\bibitem[AK]{AK} V. Alexeev, A. Knutson, Complete moduli spaces of
  branchvarieties, preprint 2006. {\tt math.AG/0602626}

\bibitem[B99]{Billey} S. Billey,
  Kostant polynomials and the cohomology ring for $G/B$. 
  Duke Math. J. 96 (1999), no. 1, 205--224.

\bibitem[BLR95]{rigid} S. Bosch, W. L\"utkebohmert, M. Raynaud,
  Formal and rigid geometry. IV. The reduced fibre theorem.
  Invent. Math. 119 (1995), no. 2, 361--398.

\bibitem[BS98]{LC} M. P. Brodmann, R. Y. Sharp,
  Local cohomology: an algebraic introduction with geometric applications. 
  Cambridge Studies in Advanced Mathematics, 60. 
  Cambridge University Press, 1998. 

\bibitem[C57]{Chow} W. Chow,
  On the Principle of Degeneration in Algebraic Geometry, 
  Annals of Mathematics (1957), 2nd Ser., Vol. 66, No. 1, 70--79. 

\bibitem[E95]{Ei} D. Eisenbud, 
  Commutative algebra, with a view toward algebraic geometry. 
  Graduate Texts in Mathematics, 150. Springer-Verlag, New York, 1995.


  \junk{
  \bibitem[EGA5]{EGA5} A. Grothendieck, unpublished notes.
    This reference will be replaced with something more official when
    I next have access to a good library!  
  }

\bibitem[GR06]{GhoRag} S. Ghorpade, K.Raghavan,
  Hilbert functions of points on Schubert varieties in the symplectic
  Grassmannian,
  Trans. Amer. Math. Soc. 358 (2006), no. 12, 5401--5423.
  {\tt math.RT/0409338}

\bibitem[H58]{Hironaka} H. Hironaka,
  A note on algebraic geometry over ground rings,
  Illinois J. Math 2 (1958), 355--366.

\bibitem[J83]{Jouanolou} J.-P. Jouanolou,
  Th\'eor\`emes de Bertini et applications. 
  Progress in Mathematics, 42. Birkhäuser Boston, Inc., Boston, MA, 1983. 

\bibitem[K06]{K06}  A.~Knutson,
  {Balanced normal cones and Fulton-MacPherson's intersection theory}, 
  Pure and Applied Mathematics Quarterly
  Vol 2, \#4, 2006 (MacPherson special issue part II)
  {\tt math.AG/0512632}

\bibitem[K]{Kostant} \bysame,
  Schubert patches degenerate to subword complexes,
  in preparation.  

\bibitem[KM04]{subword}  \bysame, E.~Miller, 
  Subword complexes in Coxeter groups,
  Advances in Mathematics 184 (2004), no. 1, 161--176. 
  {\tt math.CO/0309259}

\bibitem[KMY07]{KMY}   \bysame, E. Miller, A. Yong,
  Gr\"obner geometry of vertex decompositions and of flagged tableaux,
  to appear in J. Reine Angew. Math. {\tt math.AG/0502144}

\bibitem[KY]{KY}  \bysame, A. Yong,
  Gr\"obner geometry of Schubert and Grothendieck transition formulae,
  in preparation.

\bibitem[KR03]{KodRag} V. Kodiyalam, K. Raghavan,
  Hilbert functions of points on Schubert varieties in the Grassmannian,
  J. Algebra 270 (2003), 28--54.
  {\tt math.AG/0206121}

\bibitem[Ko95]{Kollar} J. Koll\'ar,
  {Flatness criteria}, J. Algebra \textbf{175} (1995), no.~2,  715--727.

\bibitem[KL04]{KreLak} V. Kreiman, V. Lakshmibai,
  Multiplicities of Singular Points in Schubert Varieties of Grassmannians,
   Algebra, Arithmetic and Geometry with Applications
   (West Lafayette, IN, 2000), pp. 553--563, Springer, Berlin, 2004. 

\bibitem[OS79]{Oda} T. Oda; C. S. Seshadri,
  Compactifications of the Generalized Jacobian Variety,
  Transactions of the American Mathematical Society, 
  Vol. 253. (Sep., 1979), pp. 1--90.

\bibitem[R85]{R} A. Ramanathan,
  Schubert varieties are arithmetically Cohen-Macaulay,
  Invent. Math. 80 (1985), no. 2, 283--294.

\bibitem[W06]{Willems} M. Willems,
   $K$-th\'eorie \'equivariante des tours de Bott. Application \'a la structure
   multiplicative de la $K$-th\'eorie \'equivariante des 
   vari\'et\'es de drapeaux.
   Duke Math. J. 132 (2006), no. 2, 271--309.
   {\tt math.AG/0412152}

\end{thebibliography}

\end{document}